\documentclass[a4paper,reqno]{amsart}

\usepackage[T1]{fontenc}
\usepackage[utf8x]{inputenc}
\usepackage[english]{babel}
\usepackage{yfonts}
\usepackage{dsfont}
\usepackage{amscd,amssymb,amsmath,amsthm,amsfonts}
\usepackage{mathtools}
\usepackage{accents}
\usepackage{tensor}
\usepackage{graphicx}
\usepackage{tikz}
\usetikzlibrary{calc,matrix,arrows,decorations.pathmorphing}
\usepackage{calc}
\usepackage[cal=boondox,scr=boondoxo]{mathalfa}
\usepackage{enumitem}
\usepackage{marginnote}
\usepackage{booktabs}
\usepackage{scalerel}[2016/12/29]
\usepackage{url}
\usepackage{contour}
\usepackage[normalem]{ulem}

\usepackage{hyperref}
\hypersetup{colorlinks=true,pageanchor=false,
linkcolor=blue,citecolor=red,urlcolor=red}
\usepackage[all]{hypcap}

\allowdisplaybreaks

\newtheorem{theorem}{Theorem}[section]
\newtheorem{corollary}[theorem]{Corollary}
\newtheorem{proposition}[theorem]{Proposition}
\newtheorem{lemma}[theorem]{Lemma}

\theoremstyle{definition}

\theoremstyle{remark}
\newtheorem{remark}[theorem]{Remark}

% Standard Algebra
\newcommand{\N}{\mathbb{N}}
\newcommand{\Z}{\mathbb{Z}}
\newcommand{\Q}{\mathbb{Q}}

% rmCharacters

\newcommand{\rmd}{\mathrm{d}}

% bbCharacters

\newcommand{\bbH}{\mathbb{H}}

% bfCharacters

\newcommand{\bfone}{\mathbf{1}}
\newcommand{\bfa}{\boldsymbol{a}}
\newcommand{\bfb}{\boldsymbol{b}}

\newcommand{\bfk}{\boldsymbol{k}}

\newcommand{\bfell}{\boldsymbol{\ell}}
\newcommand{\bfm}{\boldsymbol{m}}
\newcommand{\bfn}{\boldsymbol{n}}

\newcommand{\bft}{\boldsymbol{t}}
\newcommand{\bfv}{\boldsymbol{v}}

\newcommand{\bfx}{\boldsymbol{x}}
\newcommand{\bfy}{\boldsymbol{y}}

\newcommand{\bfE}{\boldsymbol{E}}
\newcommand{\bfF}{\boldsymbol{F}}

\newcommand{\bfGamma}{\boldsymbol{\Gamma}}

% calCharacters

\newcommand{\calB}{\mathcal{B}}
\newcommand{\calC}{\mathcal{C}}

\newcommand{\calE}{\mathcal{E}}
\newcommand{\calF}{\mathcal{F}}

\newcommand{\calH}{\mathcal{H}}

% scrCharacters

% frakCharacters

\newcommand{\fraki}{\mathfrak{i}}

\newcommand{\fraku}{\mathfrak{u}}

% bcalCharacters

% Greek Characters
\renewcommand{\epsilon}{\varepsilon}
\renewcommand{\theta}{\vartheta}
\renewcommand{\phi}{\varphi}
\renewcommand{\Gamma}{\varGamma}
\renewcommand{\Sigma}{\varSigma}

% Operators

\newcommand{\id}{\mathrm{id}}

\newcommand{\im}{\operatorname{im}}

\DeclareMathOperator{\Exists}{\exists}
\DeclareMathOperator{\Forall}{\forall}

% Other
\newcommand{\leqs}{\leqslant}
\newcommand{\geqs}{\geqslant}

\newcommand{\fsl}{\mathfrak{sl}}
\newcommand{\Mod}{\mathrm{Mod}}

\newcommand{\BM}{\mathrm{BM}}

\newcommand{\GL}{\mathrm{GL}}
\newcommand{\PGL}{\mathrm{PGL}}
\newcommand{\SU}{\mathrm{SU}}

\newcommand{\sqbinom}[2]{\left[ \begin{matrix} #1 \\ #2 \end{matrix} \right]}

\makeatletter
\newcommand{\subalign}[1]{
  \vcenter{
    \Let@ \restore@math@cr \default@tag
    \baselineskip\fontdimen10 \scriptfont\tw@
    \advance\baselineskip\fontdimen12 \scriptfont\tw@
    \lineskip\thr@@\fontdimen8 \scriptfont\thr@@
    \lineskiplimit\lineskip
    \ialign{\hfil$\m@th\scriptstyle##$&$\m@th\scriptstyle{}##$\crcr
      #1\crcr
    }
  }
}
\makeatother

\def\clap#1{\hbox to 0pt{\hss#1\hss}}

\def\mathclap{\mathpalette\mathclapinternal}

\def\mathclapinternal#1#2{%
\clap{$\mathsurround=0pt#1{#2}$}}

%Notation

\newcommand{\pic}[2][0]{\raisebox{-0.5\height + 2.5pt + #1pt}{\includegraphics{#2.pdf}}}

\newcommand\arxiv[2]{\href{https://arXiv.org/abs/#1}{\texttt{arXiv:\allowbreak #1} #2}}
\newcommand\doi[2]{\href{https://doi.org/#1}{#2}}

\contourlength{0.625pt}

\DeclareRobustCommand{\myuline}[1]{
 \ifmmode \text{\uline{$\phantom{#1}$}\llap{\contour{white}{$#1$}}}
 \else \uline{\phantom{#1}}\llap{\contour{white}{#1}} \fi
}

\makeatletter
\def\namedlabel#1#2{\begingroup
    #2%
    \def\@currentlabel{#2}%
    \phantomsection\label{#1}\endgroup
}
\makeatother

%Paths

\newcommand{\twist}[2]{\tau_{#1_{#2}}}

\begin{document}

\raggedbottom

\title[On Integrality of Non-Semisimple Quantum Representations]{On Integrality of Non-Semisimple Quantum Representations of Mapping Class Groups}

\author[M. De Renzi]{Marco De Renzi} 
\address{IMAG, Université de Montpellier, Place Eugène Bataillon, 34090 Montpellier, France}
\email{marco.de-renzi@umontpellier.fr}

\author[J. Martel]{Jules Martel} 
\address{AGM, Cergy Paris Université, Site de Saint-Martin, 2 avenue Adolphe Chauvin, 95300 Pontoise, France} 
\email{jules.martel-tordjman@cyu.fr}

\begin{abstract}
 For a root of unity $\zeta$ of odd prime order, we restrict coefficients of non-semisimple quantum representations of mapping class groups associated with the small quantum group $\fraku_\zeta \fsl_2$ from $\Q(\zeta)$ to $\Z[\zeta]$. We do this by exhibiting explicit bases of states spaces that span $\Z[\zeta]$-lattices that are invariant under projective actions of mapping class groups.
\end{abstract}

\maketitle
\setcounter{tocdepth}{3}

\section{Introduction}

Quantum representations of mapping class groups are one of the most valuable byproducts of TQFTs (Topological Quantum Field Theories). Those constructed by Reshetikhin and Turaev \cite{RT91}, which were inspired by the work of Witten \cite{W89}, will be referred to here as \textit{WRT representations}, or \textit{semisimple quantum representations}. We will focus in particular on projective representations associated with the quantum group of $\fsl_2$ at a root of unity of odd order $r$. Many remarkable properties of these representations are known. They preserve a (positive-definite) Hermitian pairing \cite{T94, BHMV95}, but their image is infinite \cite{F98, M99}. They are asymptothically faithful \cite{A02, FWW02}, in the sense that the intersection of all their kernels, as $r$ takes all possible values, is trivial. Furthermore, their coefficients can be restricted to rings of cyclotomic integers \cite{GMv02, GM04}. This last property in particular is at the heart of many applications of WRT representations: it can be used to show that every finite group is a quotient of a finite index subgroup of the mapping class group of any closed surface of genus at least $2$ \cite{MR11}, or to construct finite covers of surfaces whose homology is not spanned by lifts of simple closed curves \cite{KS15}.

Despite these striking properties, it is easy to find elements in the kernels of WRT representations, since every Dehn twist is sent to a finite-order operator. This is not the case, however, for the \textit{non-semisimple quantum representations} constructed by Kerler \cite{K94} and Lyubashenko \cite{L94}, based on the work of Hennings \cite{H96}, which will be referred to here as \textit{HKL representations}, or \textit{non-semisimple quantum representations} (once again, we will restrict our attention to the quantum $\fsl_2$ case). Indeed, every Dehn twist is sent to a non-diagonalizable infinite-order operator by these representations \cite{DGGPR20}, which therefore have infinite image. Even more can be said, since no diffeomorphism has been found yet in the kernels of these actions, which have been shown to be faithful in genus $1$ \cite{K96}. On top of this, many of the nice properties of semisimple quantum representations generalize to the non-semisimple setting. Indeed, HKL representations preserve a Hermitian pairing too, although an indefinite one \cite{GLPS21}. Furthermore, asymptotic faithfulness is an immediate consequence of a homological model developed in \cite{DM22}, which embeds HKL representations into a direct sum of twisted homological representations appearing in \cite{BPS21}, see Remark~\ref{R:asymptotic_faithfulness} for an explanation.

\subsection{Main result}

This paper is concerned with integrality, whose generalization to quantum $\fsl_2$ HKL representations was not known in general, up to now. In order to state more precisely our main result, we need to introduce some notation. First of all, let $g \geqs 0$ be an integer, and let $r \geqs 3$ be an odd integer. Let us fix a connected surface $\varSigma_g$ of genus $g$ with a single boundary component, whose \textit{mapping class group} will be denoted $\Mod(\varSigma_g)$. Next, let $\fraku_\zeta^\Q = \fraku_\zeta \fsl_2$ denote the \textit{small quantum group of $\fsl_2$} at the root of unity $\zeta = \smash{e^{\frac{2 \pi \fraki}{r}}}$ over the field $\Q(\zeta)$, whose definition is recalled in Section~\ref{S:small_quantum_sl2}. The corresponding HKL representation, whose definition is recalled in Section~\ref{S:quantum_representations}, is given by a homomorphism
\begin{align*}
 \rho^\Q_{g,r} : \Mod(\varSigma_g) \to \PGL_{\fraku_\zeta^\Q}(U_g).
\end{align*}
where $U_g := (\fraku_\zeta \fsl_2)^{\otimes g}$ carries the structure of a $\fraku_\zeta^\Q$-module through the $g$th tensor power of the adjoint action, and where $\smash{\PGL_{\fraku_\zeta^\Q}}(U_g)$ denotes the group of invertible $\fraku_\zeta^\Q$-module endomorphisms, considered up to non-zero scalars in $\Q(\zeta)$. We show that, if $r$ is prime, then it is possible to restrict coefficients from $\Q(\zeta)$ to $\Z[\zeta]$. We do this by considering an integral version $\fraku_\zeta^\Z$ of $\fraku_\zeta^\Q$ defined over $\Z[\zeta]$, and by providing an explicit \textit{integral} basis of $U_g$ for the action of both $\fraku_\zeta^\Z$ and $\Mod(\varSigma_g)$, see Corollary~\ref{C:main_result}.

\begin{theorem}\label{T:main_result}
 If $r$ is prime, then the basis $\calB^{E1'F}_g$ of $U_g$ defined by Equation~\eqref{E:E1primeF_basis} spans a $\Z[\zeta]$-lattice $U'_g \subset U_g$ that is invariant under the action of both $\fraku_\zeta^\Z$ and $\Mod(\varSigma_g)$, and thus yields a homomorphism
 \begin{align*}
  \rho^\Z_{g,r} : \Mod(\varSigma_g) \to \PGL_{\fraku_\zeta^\Z}(U'_g).
 \end{align*}
\end{theorem}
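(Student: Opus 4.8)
The plan is to reduce everything to a finite verification. Since $\Mod(\varSigma_g)$ is generated by finitely many Dehn twists and, in the HKL construction recalled in Section~\ref{S:quantum_representations}, the operator of each such twist on $U_g=(\fraku_\zeta\fsl_2)^{\otimes g}$ is an explicit $\fraku_\zeta^\Q$-module morphism built from the categorical structure, the theorem amounts to two integrality statements, both to be read off in the basis $\calB^{E1'F}_g$ of \eqref{E:E1primeF_basis}: (a) the $g$-fold adjoint action of the generators of $\fraku_\zeta^\Z$ preserves the $\Z[\zeta]$-span $U'_g$, and (b) the operators of a finite generating set of $\Mod(\varSigma_g)$, and their inverses, do as well.

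For (a), the point is that $\calB^{E1'F}_g$ is a tensor product over the $g$ handles of PBW-type monomials in the generators of $\fraku_\zeta^\Z$ of Section~\ref{S:small_quantum_sl2}. Expanding $\ad_x(y)=\sum x_{(1)}\, y\, S(x_{(2)})$ through the coproduct and antipode, the claim reduces to the commutation (straightening) relations of $\fraku_\zeta^\Z$, whose structure constants are quantum integers and quantum binomial coefficients, all in $\Z[\zeta]$. Here the hypothesis that $r$ is prime is already used: it makes every quantum factorial $[n]!$ with $n<r$ a unit of $\Z[\zeta]$, so the divided-power generators produce no denominators and the adjoint action is visibly integral on $U'_g$.

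Part (b) is the substantive one. Fix a finite generating set of $\Mod(\varSigma_g)$ by Dehn twists along a standard system of curves, and recall the description of the corresponding operators on $U_g$ as composites of the braiding and its inverse, the ribbon twist $\vartheta$ and its inverse, the (co)evaluations, the coend structure morphisms and the two-sided integral of $\fraku_\zeta\fsl_2$, the whole normalised by powers of the Hennings scalars $\Delta_\pm$ (the Hennings invariants of $\pm 1$-framed unknots). It suffices to show that for each generator $\tau$ a suitable representative $\tilde\tau\in\GL_{\fraku_\zeta^\Q}(U_g)$ of $\rho^\Q_{g,r}(\tau)$ — only a representative being needed, since $\rho^\Q_{g,r}$ is projective — and its inverse both have matrix entries in $\Z[\zeta]$ in the basis $\calB^{E1'F}_g$; integrality of the inverse then comes for free, as the Dehn-twist operators, suitably normalised, act with root-of-unity eigenvalues and hence with unit determinant. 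The braiding and twist are governed by the $R$-matrix and the ribbon element of $\fraku_\zeta\fsl_2$: because $r$ is prime the quantum factorials are units, and the Gauss-sum Cartan part of $R$ acts on weight vectors by root-of-unity scalars, so these pieces are already integral in a PBW basis. The genuine denominators are the factors $\Delta_+^{-1}$, and for $\fraku_\zeta\fsl_2$ one computes that $\Delta_+$ equals, up to a unit, $(\zeta-\zeta^{-1})^{r-1}$, hence — because $r$ is prime and $r\Z[\zeta]=(1-\zeta)^{r-1}$ — an associate of $r$. The role of the basis $\calB^{E1'F}_g$, and in particular of the Cartan-sector element $1'$, is precisely to absorb these: it distributes factors of $(1-\zeta)^{-1}$ across the basis vectors so that, after conjugation, the normalised operators become integral; this is the point at which primality of $r$ is essential.

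The main obstacle, and the heart of the argument, is carrying out this last absorption — that is, proving that in the basis $\calB^{E1'F}_g$ the "numerator" operators attached to the coend structure and the integral (equivalently, the $\calS$-type transformation on a single handle, and hence a Dehn twist along a longitude, which is $\calS$-conjugate to a meridian twist) are divisible by $(1-\zeta)^{r-1}$, matching the normalising factor $\Delta_+^{-1}$. Concretely this is a valuation estimate at the unique prime of $\Z[\zeta]$ above $r$; I expect it to reduce, after reading everything modulo $1-\zeta$ (where quantum integers and binomials collapse to ordinary ones), to a congruence for binomial coefficients modulo $r$ of Lucas type, again using that $r$ is prime. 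Once (a) and (b) are established, $U'_g$ is a $\Z[\zeta]$-lattice in $U_g$ stable under both $\fraku_\zeta^\Z$ and $\Mod(\varSigma_g)$; each $\rho^\Q_{g,r}(\phi)$ restricts to an automorphism of the $\fraku_\zeta^\Z$-module $U'_g$, well defined modulo $\Z[\zeta]^\times$, and since $\PGL_{\fraku_\zeta^\Z}(U'_g)$ embeds into $\PGL_{\fraku_\zeta^\Q}(U_g)$ compatibly with $\rho^\Q_{g,r}$, the assignment $\phi\mapsto\rho^\Q_{g,r}(\phi)|_{U'_g}$ is the required homomorphism $\rho^\Z_{g,r}$.
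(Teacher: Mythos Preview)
Your proposal is not a proof but a plan, and its core step is explicitly left open: you write that the divisibility of the ``numerator'' operators by the appropriate power of $1-\zeta$ ``is the heart of the argument'' and that you ``expect it to reduce\ldots\ to a congruence for binomial coefficients modulo $r$ of Lucas type''. That expectation is precisely what has to be proved, and nothing in your outline indicates how. Note also that the rescaling encoded in $\calB^{E1'F}_g$ is not uniform: the factor is $h(\zeta)^{-\lfloor \sum_j m_j/2 \rfloor}$, depending on the Cartan labels, so the required estimate is not a single global divisibility by $(1-\zeta)^{r-1}$ but a family of valuations graded by the basis vector. Your description of the obstruction therefore already misses the shape of the problem.

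More importantly, your approach is entirely different from the paper's and bypasses the machinery that makes the computation tractable. The paper does \emph{not} work directly with the HKL operators on $U_g$. Instead, it invokes the homological model of \cite{DM22} (Sections~\ref{S:Heisenberg_homology}--\ref{S:isomorphism}): the isomorphism $\Phi_g:\calH_g^{V(r)}\to U_g$ identifies the HKL action with a twisted-homology action of the form $f\cdot(\sigma\otimes v)=(\hat{\calC}_n(f)\cdot\sigma)\otimes(f\cdot v)$, where the first factor is integral by construction and all denominators are confined to the projective action $\psi_g$ of $\Mod(\varSigma_g)$ on the $r^g$-dimensional Schr\"odinger representation $V_g$ (Remark~\ref{R:integrality}). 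The problem thus collapses from $U_g$ (dimension $r^{3g}$) to $V_g$. On $V_g$ the paper exhibits a \emph{triangular} basis $\calB^t_g$ in which every Lickorish generator acts by a triangular matrix (Proposition~\ref{P:triangularity}), then rescales diagonally to $\calB^{v'}_g$ and proves the needed divisibility via explicit polynomial identities for $C_{m,n}$, $D_{m,n}$, $E_{m_1,m_2,n_1,n_2}$ (Proposition~\ref{P:divisibility}, Appendix~\ref{A:divisibility}), arguing by vanishing of successive derivatives at $q=1$. The basis $\calB^{E1'F}_g$ is then the image of $\calB^\varGamma_{n,g}\otimes\calB^{v'}_g$ under $\Phi_g$ (Corollary~\ref{C:main_result}). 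None of this structure---the homological reduction, the triangular basis, or the polynomials $C,D,E$---appears in your outline, and without it the divisibility you need remains an unverified hope.
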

 
It should be specified that some integrality properties in the non-semisimple setting had already been established, thanks to the work of Chen and Kerler \cite{CK13}. Their results, however, only concern those HKL representations that arise from quantum doubles of Hopf algebras, so $\fraku_\zeta^\Q$ does not fit in their framework.

\subsection{Strategy of the proof}\label{S:strategy}

%In order to establish our result, we start from the following observation: the homological model developed in \cite{DM22} reduces the problem of integrality for quantum representations associated with $\fraku_\zeta^\Q \fsl_2$ to the same problem for quantum representations associated with its Cartan subalgebra $\fraku_\zeta^\Q \frakh$. 
In order to establish our result, we rely on the homological model developed in \cite{DM22}, which allows us to reduce the question to a simpler but equivalent integrality problem, see Remark~\ref{R:integrality}.
Indeed, the state space $U_g$ can be embedded into a $\Q(\zeta)$-vector space $\calH_g^V$ that can be constructed as a direct sum of twisted homology groups of unordered configuration spaces $\calC_{n,g} = \calC_n(\varSigma_g)$ of the surface $\varSigma_g$. Coefficients for these twisted homology groups are provided by the \textit{Schrödinger representation} $\varphi_g : \bbH_g \to \GL_{\Z[\zeta]}(V_g)$ of the \textit{Heisenberg group} $\bbH_g$, which can be identified with a quotient of the \textit{surface braid group} $\pi_{n,g} = \pi_1(\calC_{n,g})$ for $n \geqs 2$. This embedding of $U_g$ into $\calH_g^V$ also intertwines actions of $\fraku_\zeta^\Z$ and $\Mod(\varSigma_g)$ defined on both sides: the ones on $U_g$ are given by the HKL representation, while the ones on $\calH_g^V$ were introduced in \cite{BPS21} and \cite{DM22}.

Let us try to be a little more precise. There exists an identification
\begin{align*}
 \calH_g^V \cong \bigoplus_{n \geqs 0} \calH_{n,g}^\bbH \otimes_{\Z[\bbH_g]} V_g,
\end{align*}
where $\calH_{n,g}^\bbH$ is the $n$th twisted homology group of the configuration space $\calC_{n,g}$ (relative to a portion of its boundary) with coefficients in the group ring $\Z[\bbH_g]$. Twisted homology classes in $\calH_{n,g}^\bbH$ can be explicitly described in terms of diagrams composed of system of curves in the surface $\varSigma_g$, see Section~\ref{S:Heisenberg_homology}. We use these diagrams to construct an action of $\fraku_\zeta^\Z$ onto $\calH_g^V$ that is defined in purely homological terms. Furthermore, a very nice feature of this model is that it comes with computation rules that allow us to express any arbitrary diagram as a $\Z[\bbH_g]$-linear combination of diagrams belonging to a fixed basis of $\calH_{n,g}^\bbH$. We stress the fact that these diagrams actually represent submanifolds of a regular cover $\hat{\calC}_{n,g} = \hat{\calC}_n(\varSigma_g)$ of $\calC_{n,g}$, and computation rules represent homological relations between these submanifolds, so the model is not purely combinatorial.

The problem for integrality then lies entirely with coefficients in $V_g$. Indeed, the action of $f \in \Mod(\varSigma_g)$ onto $\sigma \otimes v \in \calH_{n,g}^\bbH \otimes_{\Z[\bbH_g]} V_g$ is given by
\begin{align*}
 f \cdot (\sigma \otimes v) = (\hat{\calC}_n(f) \cdot \sigma) \otimes (f \cdot v),
\end{align*}
where $\hat{\calC}_n(f) \cdot \sigma$ is determined by the natural action of a self-diffeomorphism onto the homology of its domain (and can be explicitly described by the system of curves in the surface $\varSigma_g$ obtained by applying $f$ to the diagram representing $\sigma$), and where $f \cdot v$ is determined by a projective representation $\psi_g : \Mod(\varSigma_g) \to \GL_{\Q(\zeta)}(V_g)$. This is the only ingredient in the whole construction to feature coefficients in $\Q(\zeta)$ instead of $\Z(\zeta)$ or $\Z[\bbH_g]$.
%associated with the Cartan subalgebra $\fraku_\zeta^\Q \frakh$ of $\fraku_\zeta^\Q \fsl_2$, see Appendix~\ref{A:isomorphism_with_abelian_tqft}.
With this in mind, we simply set out to find a basis of $V_g$ spanning a $\Z[\zeta]$-lattice $V'_g \subset V_g$ that is invariant under the projective action of the mapping class group $\Mod(\varSigma_g)$.

\subsection{Structure of the paper}

In Section~\ref{S:quantum_representations}, we recall the definition of the non-semisimple quantum representations we refer to here as HKL representations. These can be built out of any factorizable ribbon Hopf algebra $H$ (or more generally out of any modular category in the potentially non-semisimple sense of Lyubashenko, although we will not need the notion here). Their construction yields projective actions of the mapping class group $\Mod(\varSigma_g)$ of the connected surface $\varSigma_g$ of genus $g$ with a single boundary component on $H^{\otimes g}$.

In Section~\ref{S:small_quantum_sl2} we recall the definition of the small quantum group $\smash{\fraku_\zeta^\Q} = \fraku_\zeta \fsl_2$ and of its integral version $\fraku_\zeta^\Z$. This provides the concrete example of factorizable ribbon Hopf algebra we will focus on, and the integrality of the associated HKL representations will be established in this paper.

In Section~\ref{S:Heisenberg_homology}, we recall the definition of the Heisenberg group $\bbH_g$ and of the corresponding Heisenberg homology groups $\calH_{n,g}^\bbH$ of the surface $\varSigma_g$, following \cite{BPS21,DM22}. We recall in particular the natural crossed $\Z[\bbH_g]$-action of $\Mod(\varSigma_g)$ carried by $\calH_{n,g}^\bbH$.

In Section~\ref{S:Schrdngr}, we recall how to obtain from this construction a family of projective representations of $\Mod(\varSigma_g)$, by choosing a homomorphism $\varphi_g : \bbH_g \to \GL_{\Z[\zeta]}(V_g)$ called the Schrödinger representation, which comes with an associated projective representation $\psi_g : \Mod(\varSigma_g) \to \PGL_{\Q(\zeta)}(V_g)$ that is responsible for the appearence of both denominators and projective indeterminacies.

In Section~\ref{S:isomorphism}, we recall the main result of \cite{DM22}, which allows us to embed the HKL representations of Sections~\ref{S:quantum_representations}--\ref{S:small_quantum_sl2} into the direct sum of twisted homological representations of Sections~\ref{S:Heisenberg_homology}--\ref{S:Schrdngr}. Thanks to this model, we are able to reduce the integrality problem for $U_g$ to the one for $V_g$.

In Section~\ref{S:triangular_basis}, we define a triangular basis $\calB_g^t$ for $V_g$, which owes its name to the fact that, with respect to $\calB_g^t$, standard Lickorish generators of $\Mod(\varSigma_g)$ all become either upper-triangular or lower-triangular, see Proposition~\ref{P:triangularity}. The proof of this crucial property relies on explicit computations which are carried out in Appendices~\ref{A:quantum_identities} and \ref{A:computations}.

In Section~\ref{S:integral_rescaling}, we define an integral basis $\calB_g^{v'}$ for $V_g$ obtained from $\calB_g^t$ through a diagonal renormalization process that ensures that all non-diagonal coefficients for the action of generators of $\Mod(\varSigma_g)$ belong to $\Z[\zeta]$, see Proposition~\ref{P:divisibility}. Our approach is inspired by \cite{GMv02}. Once again, explicit proofs are postponed to Appendix~\ref{A:divisibility}.

Finally, our main result is proved in Section~\ref{S:main_theorem}, see Theorem~\ref{T:integral_Schrdngr} and Corollary~\ref{C:main_result}.

\subsection*{Acknowledgments} The authors would like to thank Renaud Detcherry and Gregor Masbaum for helpful discussions and encouragements.

\section{HKL representations of mapping class groups}\label{S:quantum_representations}

A Hopf algebra over a field $\Bbbk$ is a $\Bbbk$-vector space $H$ equipped with a family of $\Bbbk$-linear maps composed of a unit $\eta : \Bbbk \to H$, a product $\mu : H \otimes H \to H$, a counit $\varepsilon : H \to \Bbbk$, a coproduct $\Delta : H \to H \otimes H$, and an antipode $S : H \to H$. These structure morphisms are subject to a well-known list of axioms, that the reader can find in \cite[Definitions~III.1.1, III.2.2, \& III.3.2]{K95}. For all elements $x,y \in H$, we will use the short notation $\mu(x \otimes y) = xy$ (for the product), $\eta(1) = 1$ (for the unit), and $\Delta(x) = x_{(1)} \otimes x_{(2)}$ (for the coproduct, which hides a sum).

If $H$ is finite-dimensional, then its \textit{adjoint representation} is the $H$-module given by the $\Bbbk$-vector space $H$ itself, equipped with the adjoint action
\[
 x \triangleright y = x_{(1)}yS(x_{(2)})
\]
for all $x,y \in H$. The $g$th tensor power of the adjoint representation is defined by
\[
 x \triangleright \bfy = (x_{(1)} \triangleright y_1) \otimes \ldots \otimes (x_{(g)} \triangleright y_g)
\]
for all $x \in H$ and $\bfy = y_1 \otimes \ldots \otimes y_g \in H^{\otimes g}$.

A \textit{ribbon structure} on $H$ is given by an R-matrix $R = R'_i \otimes R''_i \in H \otimes H$ (which hides a sum) and by a ribbon element $\vartheta \in H$, see \cite[Definitions~VIII.2.2. \& XIV.6.1]{K95}. We denote by $M \in H \otimes H$ the M-matrix associated with the R-matrix $R$, which is defined by $M = R''_j R'_i \otimes R'_j R''_i$ (with sums hidden).

A left integral $\lambda \in H^*$ of $H$ is a linear form on $H$ satisfying $\lambda(x_{(2)}) x_{(1)} = \lambda(x) 1$ for every $x \in H$, and a left cointegral $\Lambda \in H$ of $H$ is an element of $H$ satisfying $x \Lambda = \varepsilon(x) \Lambda$ for every $x \in H$, see \cite[Definition~10.1.1 \& 10.1.2]{R12}. Recall that, if $H$ is finite-dimensional, then a left integral and a left cointegral exist, they are unique up to scalar, and we can lock together their normalizations by requiring
\[
 \lambda(\Lambda) = 1,
\]
as follows from \cite[Theorem~10.2.2]{R12}.

The Drinfeld map $D : H^* \to H$ of a ribbon Hopf algebra $H$ is the linear map determined by $D(f) := (f \otimes \id_H)(M)$ for every $f \in H^*$, where $M$ is the M-matrix of $H$. By definition, $H$ is \textit{factorizable} if $D$ is a linear isomorphism. This happens if and only if $\lambda(R'_j R''_i)R''_j R'_i$ is a cointegral, see \cite[Theorem~5]{K96} and \cite[Proposition~7.1]{BD21}, and we fix the normalization of both $\lambda$ and $\Lambda$ by asking that
\[
 \lambda(R'_j R''_i)R''_j R'_i = \Lambda.
\]

Let us fix a connected surface $\varSigma_g$ of genus $g$ with one boundary component, that we will represent as follows:
\[
 \pic{surface_1}
\]
Let us denote by $\Mod(\varSigma_g)$ the mapping class group of $\varSigma_g$. By definition, it is the group of positive self-diffeomorphisms of $\varSigma_g$ fixing the boundary pointwise, considered up to isotopies fixing the boundary pointwise. As proved in \cite{L64}, it is generated by (positive) Dehn twists
\[
 \{ \twist{\alpha}{j}, \twist{\beta}{j}, \twist{\gamma}{k} \mid 1 \leqs j \leqs g, 1 \leqs k \leqs g-1 \}
\]
along the simple closed curves
\begin{gather*}
 \pic{Dehn_twists}
\end{gather*}
When $g=1$ we set $\tau_\alpha := \tau_{\alpha_1}$ and $\tau_\beta := \tau_{\beta_1}$, and when $g=2$ we set $\tau_\gamma := \tau_{\gamma_1}$.

The \textit{HKL representation} $\rho^H_g : \Mod(\varSigma_g) \to \PGL_H(H^{\otimes g})$, originally defined in \cite[Section~4]{L94}, is determined by
\begin{align}
 \tau_\alpha \cdot x &\propto \vartheta^{-1} x, \label{E:tau_alpha_HKL} \\*
 \tau_\beta \cdot x &\propto \lambda(\vartheta_{(2)} x) S(\vartheta_{(1)}) \label{E:tau_beta_HKL}
\end{align}
for every $x \in H$ when $g = 1$, by 
\begin{align}
 \tau_\gamma \cdot (x_1 \otimes x_2) &\propto x_1 S(\vartheta^{-1}_{(1)}) \otimes \vartheta^{-1}_{(2)} x_2 \label{E:tau_gamma_HKL}
\end{align}
for all $x_1,x_2 \in H$ when $g = 2$, and by 
\begin{align}
 \tau_{\alpha_j} \cdot \bfx &\propto x_1 \otimes \ldots \otimes x_{j-1} \otimes (\tau_\alpha \cdot x_j) \otimes x_{j+1} \otimes \ldots \otimes x_g, \nonumber \\*
 \tau_{\beta_j} \cdot \bfx &\propto x_1 \otimes \ldots \otimes x_{j-1} \otimes (\tau_\beta \cdot x_j) \otimes x_{j+1} \otimes \ldots \otimes x_g, \nonumber \\*
 \tau_{\gamma_k} \cdot \bfx &\propto x_1 \otimes \ldots \otimes x_{k-1} \otimes (\tau_\gamma \cdot (x_k \otimes x_{k+1})) \otimes x_{k+2} \otimes \ldots \otimes x_g \nonumber
\end{align}
for every $\bfx = x_1 \otimes \ldots \otimes x_g \in H^{\otimes g}$ when $g \geqs 2$, as proved in \cite[Proposition~4.2]{DM22}. Here, $\PGL_H(H^{\otimes g})$ denotes the group of invertible $H$-module endomorphisms for (the $g$th tensor power of) the adjoint action of $H$ onto $H^{\otimes g}$, considered up to non-zero scalars in $\Bbbk$.

\section{Small quantum \texorpdfstring{$\fsl_2$}{sl(2)}}\label{S:small_quantum_sl2}

Let us consider a formal variable $q$ and, for all integers $n \in \Z$ and $k,\ell \geqs 0$, let us set
\begin{align*}
 \{ n \}_q &:= q^n-q^{-n}, &
 \{ n;k \}_q &:= \prod_{j=1}^k \{ n-k+j \}_q, &
 \{ k \}_q! &:= \{ k;k \}_q, \\*
 [n]_q &:= \frac{\{ n \}_q}{\{ 1 \}_q}, &
 [k]_q! &:= \prod_{j=1}^k [j]_q, &
 \sqbinom{k}{\ell}_q &:= \frac{[k]_q!}{[\ell]_q![k-\ell]!},
\end{align*}
with the convention that $\{ n;0 \}_q = 1$ and $[0]_q! = 1$. For convenience, we also set
\begin{align*}
 \{ n;k \}_q &:= 0, & 
 \sqbinom{k}{\ell}_q &:= 0
\end{align*}
whenever $k < 0$, $\ell < 0$, or $k < \ell$. Notice that
\begin{align*}
 \{ n \}_q, \{ n;k \}_q, \{ k \}_q!, [n]_q, [k]_q!, \sqbinom{k}{\ell}_q &\in \Z[q,q^{-1}]
\end{align*}
and that
\begin{align*}
 \{ n \}_{q^{-1}} &:= - \{ n \}_q, &
 \{ n;k \}_{q^{-1}} &:= (-1)^k \{ n;k \}_q, &
 \{ k \}_{q^{-1}}! &:= (-1)^k \{ k \}_q!, \\*
 [n]_{q^{-1}} &:= [n]_q, &
 [k]_{q^{-1}}! &:= [k]_q!, &
 \sqbinom{k}{\ell}_{q^{-1}} &:= \sqbinom{k}{\ell}_q
\end{align*}
for all integers $n,k,\ell \in \Z$.

Let us fix an odd integer $r \geqs 3$, and let us consider the primitive $r$th root of unity $\zeta = e^{\frac{2 \pi \fraki}{r}}$. Notice that $[n]_\zeta$, $[k]_\zeta!$, and $\sqbinom{k}{\ell}_\zeta$ are invertible in $\Z[\zeta]$ for all integers $n,k,\ell \in \Z$, see \cite[Lemma~3.1.$(ii)$]{MR97}. For every $n \in \Z/r\Z$, we denote by $G_n$ the Gauss sum
\begin{align}\label{E:Gauss_sum}
 G_n := \sum_{\ell=0}^{r-1} \zeta^{-2\ell(\ell-n)} = \fraki^{\frac{r-1}{2}} \sqrt{r} \zeta^{\frac{r+1}{2}n^2},
\end{align}
see \cite[Appendix~B]{BD21} for a computation.

The \textit{small quantum group} $\fraku_\zeta^\Q = \fraku_\zeta \fsl_2$, first defined by Lusztig in \cite{Lu90}, is the $\Q(\zeta)$-algebra with generators $\{ E,F^{(1)},K \}$ and relations
\begin{gather*}
 E^r = (F^{(1)})^r = 0, \qquad K^r = 1, \\*
 K E K^{-1} = \zeta^2 E, \qquad K F^{(1)} K^{-1} = \zeta^{-2} F^{(1)}, \qquad
 [E,F^{(1)}] = K - K^{-1}.
\end{gather*}
It admits a Hopf algebra structure obtained by setting
\begin{align*}
 \Delta(E) &= E \otimes K + 1 \otimes E, & \varepsilon(E) &= 0, & S(E) &= -E K^{-1}, \\*
 \Delta(F^{(1)}) &= K^{-1} \otimes F^{(1)} + F^{(1)} \otimes 1, & \varepsilon(F^{(1)}) &= 0, & S(F^{(1)}) &= - K F^{(1)}, \\*
 \Delta(K) &= K \otimes K, & \varepsilon(K) &= 1, & S(K) &= K^{-1}.
\end{align*}
Remark that Lusztig considers the opposite coproduct, while we are using the one of Kassel \cite[Section~VII.1]{K95}.

Let us set
\begin{align}\label{E:one_divided_powers_def}
 1_n &:= \frac{1}{r} \sum_{m=0}^{r-1} \zeta^{2mn} K^m, &
 F^{(k)} := \frac{(F^{(1)})^k}{[k]_\zeta!},
\end{align}
for every $n \in \Z/r\Z$ and every integer $0 \leqs k \leqs r-1$. Thanks to \cite[Theorem~5.6]{Lu90}, a basis of $U_g = (\fraku_\zeta^\Q)^{\otimes g}$ over $\Q(\zeta)$ is given by
\begin{equation}\label{E:E1F_basis}
 \calB^{E1F}_g := 
 \left\{ \bfE^{\bfell} \bfone_{\bfm} \bfF^{(\bfn)} \Biggm| 
 \begin{array}{l}
  \bfell = (\ell_1,\ldots,\ell_g), \bfn = (n_1,\ldots,n_g) \in \N^{\times g}, \\
  \bfm = (m_1,\ldots,m_g) \in (\Z/r\Z)^{\times g}, \\
  0 \leqs \ell_j,n_j \leqs r-1 \ \Forall 1 \leqs j \leqs g
 \end{array} \right\},
\end{equation}
where 
\[
 \bfE^{\bfell} \bfone_{\bfm} \bfF^{(\bfn)} := E^{\ell_1} 1_{m_1} F^{(n_1)} \otimes \ldots \otimes E^{\ell_g} 1_{m_g} F^{(n_g)}.
\]
In particular, $\calB^{E1F}_1$ spans a $\Z[\zeta]$-lattice 
\begin{equation}\label{E:quantum_sl2_integral_form}
 \fraku_\zeta^\Z := \langle \calB^{E1F}_1 \rangle_{\Z[\zeta]} \subset \fraku_\zeta^\Q.
\end{equation}

Notice that $\fraku_\zeta^\Z$ is a sub-$\Z[\zeta]$-algebra of $\fraku_\zeta^\Q$, generated by $\{ E,F^{(1)},1_n \mid n \in \Z/r\Z \}$. Indeed, we have
\begin{align}
 1_n E &= E 1_{n+1}, &
 1_n F^{(1)} &= F^{(1)} 1_{n-1}, &
 1_m 1_n &= \delta_{m,n} 1_n
\end{align}
for all $m,n \in \Z/r\Z$, see \cite[Lemma~4.3]{DM22}. Furthermore, $\fraku_\zeta^\Z$ is a Hopf subalgebra of $\fraku_\zeta^\Q$, because
\begin{align*}
 \Delta(1_n) &= \sum_{m=0}^{r-1} 1_{n-m} \otimes 1_m, &
 \varepsilon(1_n) &= \delta_{n,0}, &
 S(1_n) &= 1_{-n}
\end{align*}
for all $m,n \in \Z/r\Z$, see \cite[Lemma~D.1]{DM22}.

Both $\fraku_\zeta^\Z$ and $\fraku_\zeta^\Q$ admit ribbon Hopf algebra structures. Indeed, an \textit{R-matrix} $R = R'_i \otimes R''_i \in \fraku_\zeta^\Z \otimes \fraku_\zeta^\Z$ is given by
\begin{align*}
 R 
 &= \sum_{m,n=0}^{r-1} \zeta^{\frac{n(n-1)}{2}} K^{-m} E^n \otimes 1_m F^{(n)} 
 = \sum_{m,n=0}^{r-1} \zeta^{\frac{n(n-1)}{2}} 1_m E^n \otimes K^{-m} F^{(n)},
\end{align*}
and a ribbon element $\vartheta \in \fraku_\zeta^\Z$ is given by
\begin{align*}
 \vartheta
 &= \sum_{m,n=0}^{r-1} (-1)^m 
 \zeta^{-\frac{(m+3)m}{2}-2(m+n+1)n} E^m F^{(m)} 1_n,
\end{align*}
with inverse $\vartheta^{-1} \in \fraku_\zeta^\Z$ given by
\begin{align*}
 \vartheta^{-1}
 &= \sum_{m,n=0}^{r-1}
 \zeta^{\frac{(m+3)m}{2}+2(m+n+1)n} E^m F^{(m)} 1_n,
\end{align*}
compare with \cite[Example~3.4.3]{M95}, \cite[Lemma~B.2]{BD21}, or \cite[Lemma~D.2]{DM22}.

A non-zero left integral $\lambda$ of $\fraku_\zeta^\Q$ is given by
\begin{equation*}
 \lambda(E^\ell F^{(m)} 1_n) = \frac{\zeta^{-2n}}{\sqrt{r}} \delta_{\ell,r-1} \delta_{m,r-1},
\end{equation*}
and a non-zero two-sided cointegral $\Lambda$ of $\fraku_\zeta^\Q$ satisfying $\lambda(\Lambda) = 1$ is given by
\begin{equation*}
 \Lambda := \sqrt{r} E^{r-1} F^{(r-1)} 1_0,
\end{equation*}
compare with \cite[Proposition~A.5.1]{L94}. The ribbon Hopf algebra $\fraku_\zeta^\Q$ is \textit{factorizable}, as shown in \cite[Corollary~A.3.3]{L94}, see also \cite[Example~3.4.3]{M95}. Then, we set
\begin{align}\label{E:HKL_rep}
 \smash{\rho^\Q_{g,r} := \rho^{\fraku_\zeta^\Q}_g,}
\end{align}
where $\rho^H_g$ is defined by Equations~\eqref{E:tau_alpha_HKL}--\eqref{E:tau_gamma_HKL} for a factorizable ribbon Hopf algebra $H$.

\section{Heisenberg homology groups}\label{S:Heisenberg_homology}

Let us consider the \textit{$n$th unordered configuration space} 
\begin{align*}
 \calC_{n,g} = \calC_n(\varSigma_g) := \{ \{ x_1,\ldots,x_n \} \subset \varSigma_g \mid x_i \neq x_j \ \Forall i \neq j \}
\end{align*}
of the surface $\varSigma_g$. Its fundamental group $\pi_{n,g} = \pi_1(\calC_{n,g})$, called the \textit{$n$th surface braid group}, has been intensively studied, see \cite[Theorem~2.1]{BG05}, based on \cite[Theorem~1.1]{B01}, for an explicit presentation. In particular, $\pi_{n,g}$ admits a quotient\footnote{For all integers $n \geqs 3$ and $g \geqs 1$, the Heisenberg group $\bbH_g$ is isomorphic to the third and largest lower central quotient $\pi_{n,g}/\varGamma_3(\pi_{n,g})$, which is recursively defined by $\varGamma_1(\pi_{n,g}) = \pi_{n,g}$ and by $\varGamma_k(\pi_{n,g}) = [\pi_{n,g},\varGamma_{k-1}(\pi_{n,g})]$ for $k > 1$, see \cite[Theorem~1]{BGG05}.} which is isomorphic to the \textit{$g$th Heisenberg group}
\begin{align*}
 \bbH_g 
 := \left\langle \sigma, \alpha_j, \beta_j \ \Forall 1 \leqs j \leqs g \Biggm| 
 \begin{array}{l}
  {[\sigma,\alpha_j] = [\sigma,\beta_j] = [\alpha_j,\alpha_k] = [\beta_j,\beta_k] = 1,} \\[5pt]
  {[\alpha_j,\beta_k] = \sigma^{-2 \delta_{j,k}} \ \Forall 1 \leqs j,k \leqs g}
 \end{array} \right\rangle
\end{align*}
whenever $n \geqs 2$ (or to a subgroup otherwise), see \cite[Lemma~4.4]{BGG11}. When $g=1$, we set $\alpha := \alpha_1$ and $\beta := \beta_1$. We denote by $\hat{\calC}_{n,g} = \hat{\calC}_n(\varSigma_g)$ the corresponding cover, and by $\hat{p} : \hat{\calC}_{n,g} \to \calC_{n,g}$ its covering map, and we define the \textit{$n$th Heisenberg homology group}\footnote{This definition involves the use of \textit{Borel--Moore homology}, see \cite[Appendix~A.2]{DM22} for more details. See also \cite[Appendix~A.1]{DM22} for a reformulation of the construction of \cite[Section~2.2]{DM22} in terms of regular covers.} as
\begin{align*}
 \calH_{n,g}^\bbH &:= H^\BM_n(\calC_{n,g},\calC_{n,g}^{\partial_-};\Z[\bbH_g])  
 \cong H^\BM_n(\hat{\calC}_{n,g},\hat{p}^{-1}(\calC_{n,g}^{\partial_-})),
\end{align*}
where
\begin{align*}
 \calC_{n,g}^{\partial_-} := \{ \{ x_1,\ldots,x_n \} \in \calC_{n,g} \mid \Exists i \ x_i \in \partial_- \varSigma_g \} \subset \partial \calC_{n,g},
\end{align*}
and where $\partial_- \varSigma_g, \partial_+ \varSigma_g \subset \partial \varSigma_g$ are intervals satisfying $\partial_- \varSigma_g \cup \partial_+ \varSigma_g = \partial \varSigma_g$ and $\partial_- \varSigma_g \cap \partial_+ \varSigma_g = \partial (\partial_- \varSigma_g) = \partial (\partial_+ \varSigma_g)$, that we represent as the bottom side and as the union of the remaining three sides in the picture below, respectively.
\[
 \pic{surface_2}
\]

Twisted homology classes in $\calH_{n,g}^\bbH$ can be represented efficiently through a diagrammatic language based on (systems of) curves in the surface $\varSigma_g$. For instance, the following picture represents a twisted homology class in $\calH_{n,1}^\bbH$:
\[
 \pic{surface_3}
\]
Indeed, every properly embedded arc $\varGamma \subset \varSigma_g$ satisfying $\partial \varGamma \subset \partial_- \varSigma_g$ determines a homology class $\varGamma \in H_1(\varSigma_g,\partial_- \varSigma_g)$, and its configuration space $\calC_n(\varGamma) \subset \calC_n(\varSigma_g)$ determines a homology class $\varGamma(n) \in H^\BM_n(\calC_{n,g},\calC_{n,g}^{\partial_-})$. The label $n \geqs 0$ in the picture above denotes the switch from an embedded arc in $\varSigma_g$ to its configuration space in $\calC_n(\varSigma_g)$. In order to specify a lift $\hat{\calC}_n(\varGamma)$ of $\calC_n(\varGamma)$ to $\hat{\calC}_n(\varSigma_g)$, we simply need to choose a path from a base-point (which we fix on $\partial_+ \varSigma_g$ for aesthetic purposes) to any point of $\calC_n(\varGamma)$, which yields a twisted homology class $\hat{\varGamma}(n) \in H^\BM_n(\hat{\calC}_{n,g},\hat{p}^{-1}(\calC_{n,g}^{\partial_-}))$. This is what the red path is for (notice that a path in $\calC_{n,g}$ is actually given by an $n$tuple of paths, so we adopt the convention that, whenever an embedded arc is labeled by an integer $n \geqs 0$, its red path actually represents $n$ parallel paths in $\varSigma_g$).

It can be shown that $\calH_{n,g}^\bbH$ is a free $\Z[\bbH_g]$-module of rank $\binom{2g+n-1}{n}$, see \cite[Theorem~A.(a)]{BPS21} or \cite[Proposition~2.9]{DM22}. Furthermore, explicit computation rules \cite[Proposition~2.13]{DM22} allow us to express every twisted homology class in $\calH_{n,g}^\bbH$ as a $\Z[\bbH_g]$-linear combination of twisted homology classes in the basis
\begin{align}\label{E:Gamma_basis}
 \calB^\varGamma_{n,g} := \left\{ \hat{\bfGamma}(\bfa,\bfb) \Biggm| 
 \begin{array}{l}
  \bfa = (a_1,\ldots,a_g), \bfb = (b_1,\ldots,b_g) \in \N^{\times g} \\
  a_1 + b_1 + \ldots + a_g + b_g = n
 \end{array} \right\},
\end{align}
where 
\[
 \hat{\bfGamma}(\bfa,\bfb) := \pic{basis}
\]

Every self-diffeomorphism $f$ of $\varSigma_g$ can be extended to a self-diffeomorphism $\calC_n(f)$ of $\calC_n(\varSigma_g)$ by setting
\begin{align*}
 \calC_n(f)\{ x_1,\ldots,x_n \} = \{ f(x_1),\ldots,f(x_n) \},
\end{align*}
and, since $\varGamma_3(\pi_{n,g}) < \pi_{n,g}$ is a characteristic subgroup, $\calC_n(f)$ lifts to a self-diffeomorphism $\hat{\calC}_n(f)$ of $\hat{\calC}_n(\varSigma_g)$. In particular, $\Mod(\varSigma_g)$ acts on the quotient $\bbH_g$ of $\pi_{n,g}$ by setting
\begin{align*}
 f \cdot [\gamma] := [\calC_n(f) \cdot \gamma]
\end{align*}
for every $\gamma \in \pi_{n,g}$, and on the homology group $\calH_{n,g}^\bbH$ by setting
\begin{align*}
 f \cdot \sigma := \hat{\calC}_n(f) \cdot \sigma
\end{align*}
for every $\sigma \in \calH_{n,g}^\bbH$. Notice that this action is \textit{crossed} with respect to the $\Z[\bbH_g]$-module structure on $\calH_{n,g}^\bbH$, in the sense that
\begin{align*}
 f \cdot ([\gamma] \cdot \sigma) = [\calC_n(f) \cdot \gamma] \cdot (\hat{\calC}_n(f) \cdot \sigma)
\end{align*}
for all $\gamma \in \pi_{n,g}$ and $\sigma \in \calH_{n,g}^\bbH$. 

\section{Schrödinger representations}\label{S:Schrdngr}

We consider the \textit{Schrödinger representation}, which is the unique irriducible representation of $\bbH_g$ where $\sigma$ acts by scalar multiplication by $-\zeta^{-2}$. More explicitly, this is obtained by fixing an $r^g$-dimensional $\Q(\zeta)$-vector space $V_g$ with basis 
\begin{align}\label{E:v_basis}
 \calB^v_g := \{ \bfv_{\bfn} = v_{n_1} \otimes \ldots \otimes v_{n_g} \mid \bfn = (n_1,\ldots,n_g) \in (\Z/r\Z)^{\times g} \},
\end{align}
and by considering a homomorphism $\varphi_g : \bbH_g \to \GL_{\Z[\zeta]}(V_g)$ determined by
\begin{align}
 \alpha \cdot v_n &= \zeta^{4n} v_n, \label{E:alpha} \\*
 \beta \cdot v_n &= v_{n+1}, \label{E:beta}
\end{align}
for every $n \in \Z/r\Z$ when $g=1$, and by
\begin{align}
 \alpha_j \cdot \bfv_{\bfn} &= v_{n_1} \otimes \ldots \otimes v_{n_{j-1}} \otimes (\alpha \cdot v_{n_j}) \otimes v_{n_{j+1}} \otimes \ldots \otimes v_{n_g}, \nonumber \\*
 \beta_j \cdot \bfv_{\bfn} &= v_{n_1} \otimes \ldots \otimes v_{n_{j-1}} \otimes (\beta \cdot v_{n_j}) \otimes v_{n_{j+1}} \otimes \ldots \otimes v_{n_g} \nonumber
\end{align}
for every $\bfn = (n_1,\ldots,n_g) \in (\Z/r\Z)^{\times g}$ when $g \geqs 1$, where $\bfv_{\bfn} = v_{n_1} \otimes \ldots \otimes v_{n_g}$. Since $\calH_{n,g}^\bbH$ is a free $\Z[\bbH_g]$-module, changing coefficients amounts to taking a tensor product
\begin{align*}
 \calH_{n,g}^V &:= H^\BM_n(\calC_{n,g},\calC_{n,g}^{\partial_-};V_g)  
 \cong \calH_{n,g}^\bbH \otimes_{\Z[\bbH_g]} V_g.
\end{align*}
The induced action of $\Mod(\varSigma_g)$ on $\calH_{n,g}^V$ can be untwisted by counteracting the effect on coefficients using the projective representation $\psi_g : \Mod(\varSigma_g) \to \PGL_{\Q(\zeta)}(V_g)$ determined by
\begin{align}
 \tau_\alpha \cdot v_n 
 &\propto \frac{1}{G_1} \sum_{\ell=0}^{r-1} \zeta^{-2\ell(\ell-1)} \alpha^\ell \cdot v_n 
 = \frac{1}{G_1} \sum_{\ell=0}^{r-1} \zeta^{-2\ell(\ell-2n-1)} v_n \nonumber \\*
 &= \zeta^{2(n+1)n} v_n, \label{E:tau_alpha_psi} \\*
 \tau_\beta \cdot v_n &\propto \frac{1}{G_1} \sum_{\ell=0}^{r-1} \zeta^{-2(\ell+1)\ell} \beta^\ell \cdot v_n 
 = \frac{1}{G_1} \sum_{\ell=0}^{r-1} \zeta^{-2(\ell+1)\ell} v_{n+\ell} \nonumber \\*
 &= \frac{1}{G_1} \sum_{m=0}^{r-1} \zeta^{-2(m-n+1)(m-n)} v_m \label{E:tau_beta_psi}
\end{align}
for every $n \in \Z/r\Z$ when $g=1$, where $G_1$ is the Gauss sum of Equation~\eqref{E:Gauss_sum}, by
\begin{align}
 \tau_\gamma \cdot (v_{n_1} \otimes v_{n_2}) &\propto \frac{1}{G_1} \sum_{\ell=0}^{r-1} \zeta^{-2(\ell+1)\ell} (\alpha^{-\ell} \cdot v_{n_1}) \otimes (\alpha^\ell \cdot v_{n_2}) \nonumber \\*
 &= \zeta^{2(n_1-n_2+1)(n_1-n_2)} v_{n_1} \otimes v_{n_2} \label{E:tau_gamma_psi}
\end{align}
for $n_1,n_2 \in \Z/r\Z$ when $g=2$, and by
\begin{align}
 \tau_{\alpha_j} \cdot \bfv_{\bfn} 
 &\propto v_{n_1} \otimes \ldots \otimes v_{n_{j-1}} \otimes (\tau_\alpha \cdot v_{n_j}) \otimes v_{n_{j+1}} \otimes \ldots \otimes v_{n_g}, \nonumber \\*
 \tau_{\beta_j} \cdot \bfv_{\bfn} 
 &\propto v_{n_1} \otimes \ldots \otimes v_{n_{j-1}} \otimes (\tau_\beta \cdot v_{n_j}) \otimes v_{n_{j+1}} \otimes \ldots \otimes v_{n_g}, \nonumber \\*
 \tau_{\gamma_k} \cdot \bfv_{\bfn} 
 &\propto v_{n_1} \otimes \ldots \otimes v_{n_{k-1}} \otimes (\tau_\gamma \cdot (v_{n_k} \otimes v_{n_{k+1}})) \otimes v_{n_{k+2}} \otimes \ldots \otimes v_{n_g} \nonumber
\end{align}
for every $\bfn = (n_1,\ldots,n_g) \in (\Z/r\Z)^{\times g}$ when $g \geqs 2$, where $\bfv_{\bfn} = v_{n_1} \otimes \ldots \otimes v_{n_g}$, see \cite[Corollary~2.27]{DM22} (notice that the difference between these formulas and \cite[Equation~(19)]{DM22} is that here we chose representatives in $\PGL_{\Q(\zeta)}(V_g)$ with determinant in $\Z[\zeta]^\times$, by dividing the image of every generator by a factor $G_1$). We obtain a projective action of $\Mod(\varSigma_g)$ onto $\calH_{n,g}^V$ determined by
\begin{align}\label{E:BPS_rep}
 f \cdot (\sigma \otimes v) = (\hat{\calC}_n(f) \cdot \sigma) \otimes (f \cdot v)
\end{align}
for all $f \in \Mod(\varSigma_g)$, $\sigma \in \calH_{n,g}^\bbH$, and $v \in V_g$. 

\section{Isomorphism between quantum and homological representations}\label{S:isomorphism}

Let us set
\begin{align*}
 \calH_{n,g}^{V,\dagger} &:= H_n(\calC_{n,g},\calC_{n,g}^{\partial_-};V_g), &
 \calH_g^{V,\dagger} &:= \bigoplus_{n \geqs 0} \calH_{n,g}^{V,\dagger}.
\end{align*}
Since all standard chains (and cycles) determine Borel--Moore chains (and cycles), we have natural maps $\iota_{n,g} : \calH_{n,g}^{V,\dagger} \to \calH_{n,g}^V$, whose images we denote
\begin{align}
 \calH_{n,g}^{V(r)} &:= \im \iota_{n,g} \subset \calH_{n,g}^V, &
 \calH_g^{V(r)} &:= \bigoplus_{n \geqs 0} \im \iota_{n,g} \subset \bigoplus_{n \geqs 0} \calH_{n,g}^V =: \calH_g^V.
\end{align}

It is shown in \cite[Section~2.4.3]{DM22} that the $\Q(\zeta)$-vector space $\calH_g^{V(r)}$ admits a basis
\begin{align}\label{E:Gamma_v_basis}
 \calB^{\varGamma,v}_g := 
 \left\{ \hat{\bfGamma}(\bfa,\bfb) \otimes \bfv_{\bfn} \Biggm| 
 \begin{array}{l}
  \bfa = (a_1,\ldots,a_g), \bfb = (b_1,\ldots,b_g) \in \N^{\times g}, \\
  \bfn = (n_1,\ldots,n_g) \in (\Z/r\Z)^{\times g}, \\
  0 \leqs a_j,b_j \leqs r-1 \ \Forall 1 \leqs j \leqs g
 \end{array} \right\},
\end{align}
built from the bases $\calB^\varGamma_{n,g}$ and $\calB^v_g$ defined by Equations~\eqref{E:Gamma_basis} and \eqref{E:v_basis} respectively.

As explained in \cite[Section~2.3]{DM22}, the subspace $\calH_g^{V(r)}$ of $\calH_g^V$ can be equipped with the structure of a $\fraku_\zeta^\Z$-module. Roughly speaking, the action of $E$ is given by (the restriction to $\calH_{n,g}^{V(r)}$ of) the operator
\begin{align}
 \calE : \calH_{n,g}^V &\to H^\BM_{n-1}(\calC_{n,g}^{\partial_-},\calC_{n,g}^{2 \partial_-};V_g) \cong \calH_{n-1,g}^V \label{E:homological_E} \\*
 \sigma \otimes v &\mapsto (-1)^{n-1} \partial_*(\sigma) \otimes v, \nonumber
\end{align}
where $\partial_*$ is the connection homomorphism in the long exact sequence of the triple $(\calC_{n,g},\calC_{n,g}^{\partial_-},\calC_{n,g}^{2 \partial_-})$ for 
\begin{align*}
 \calC_{n,g}^{2 \partial_-} := \{ \{ x_1,\ldots,x_n \} \in \calC_{n,g} \mid \Exists i < j \ x_i,x_j \in \partial_- \varSigma_g \} \subset \calC_{n,g}^{\partial_-},
\end{align*}
see \cite[Definition~2.14]{DM22} for more details. Similarly, for every $0 \leqs k \leqs r-1$, the action of $F^{(k)}$ is given by (the restriction to $\calH_{n,g}^{V(r)}$ of) the operator
\begin{align}
 \calF^{(k)} : \calH_{n,g}^V &\to \calH_{n+k,g}^V \label{E:homological_F} \\*
 \sigma \otimes v &\mapsto \zeta^{\frac{k(k-1)}{2}+2kg} (\hat{\Phi}(k) \times \sigma) \otimes v, \nonumber
\end{align}
where $\hat{\Phi}(k) \times \sigma \in \calH_{n+k,g}^\bbH$ is the twisted homology class obtained from $\sigma \in \calH_{n,g}^\bbH$ by gluing a collar $\partial_+ \varSigma_g \times [0,1]$ of $\partial_+ \varSigma_g$ to $\varSigma_g$ whose core $\Phi = \partial_+ \varSigma_g \times \{ \frac{1}{2} \}$ is labeled by the integer $k$, see \cite[Definition~2.15]{DM22} for more details. Finally, for every $m \in \Z/r\Z$, the action of $1_m$ is given by (the restriction to $\calH_{n,g}^{V(r)}$ of) the projector
\begin{align}
 1_m : \calH_{n,g}^V &\to \calH_{n,g}^V \label{E:homological_1} \\*
 \sigma \otimes v &\mapsto \delta_{m,n+g}(r) \sigma \otimes v, \nonumber
\end{align}
where $\delta_{m,n+g}(r) = 1$ if $m \equiv n+g \pmod r$, and $\delta_{m,n+g}(r) = 0$ otherwise, as follows from \cite[Definition~2.16]{DM22} and Equation~\eqref{E:one_divided_powers_def}. Notice that this action of $\fraku_\zeta^\Z$ clearly commutes with the projective action of $\Mod(\varSigma_g)$, since the latter is induced by diffeomorphisms that fix the boundary of $\varSigma_g$ pointwise, while the former only involves boundary operations.

For all $\bfa=(a_1,\ldots,a_g), \bfb = (b_1,\ldots,b_g) \in \Z^{\times g}$ and $\bfn = (n_1,\ldots,n_r) \in (\Z/r\Z)^{\times g}$, let us set
\begin{align*}
 N_k(a_k,b_k,n_k) &:= \zeta^{2(a_k+b_k)(k-1)+\frac{a_k(a_k-1)}{2}+2a_kb_k-2(b_k-1)n_k} \in \Z[\zeta], \\*
 N(\bfa,\bfb,\bfn) &:= \prod_{1 \leqs i < j \leqs g} \zeta^{2(a_i+b_i)(a_j+b_j)} \prod_{k=1}^g N_k(a_k,b_k,n_k) \in \Z[\zeta].
\end{align*}
It is shown in \cite[Theorem~6.1]{DM22} that, if we set
\begin{align*}
 \bar{\bfk} &= (k_g,\ldots,k_1), &
 \iota(\bfk) &= (r-k_1-1,\ldots,r-k_g-1)
\end{align*}
for every $\bfk = (k_1,\ldots,k_g) \in \Z^{\times g}$, then
\begin{align}
 \Phi_g : \calH_g^{V(r)} &\to U_g \label{E:isomorphism} \\*
 \hat{\bfGamma}(\bfa,\bfb) \otimes \bfv_{\bfn} &\mapsto N(\bfa,\bfb,\bfn) \bfE^{\iota(\bar{\bfb})} \bfone_{\bar{\bfn}} \bfF^{(\bar{\bfa})} \nonumber
\end{align}
defines an isomorphism of representations of both $\fraku_\zeta^\Z$ and $\Mod(\varSigma_g)$. 

\begin{remark}\label{R:asymptotic_faithfulness}
 An important byproduct of the isomorphism $\Phi_g : \calH_g^{V(r)} \to U_g$ defined by Equation~\eqref{E:isomorphism} is that the projective action of $\Mod(\varSigma_g)$ onto $U_g$ is asymptotically faithful. Indeed, on the one hand, if
 \begin{align*}
  f \in \bigcap_{\substack{r \geqs 3 \\ \mathclap{r \equiv 1 \pmod 2}}} \ker \rho_{g,r}^\Q,
 \end{align*}
 then, in particular, $\hat{\calC}_n(f) \cdot \sigma = \sigma$ for all $n \geqs 0$ and $\sigma \in \calH_{n,g}^\bbH$, thanks to Equation~\eqref{E:BPS_rep}. On the other hand, thanks to \cite[Proposition~40]{BPS21}, the kernel of the action of $\Mod(\varSigma_g)$ onto $\calH_{n,g}^\bbH$ is contained in the $n$th term $J_n(\Mod(\varSigma_g))$ of the Johnson filtration of $\Mod(\varSigma_g)$. Since their total intersection is trivial, we have 
 \begin{align*}
  f \in \bigcap_{n \geqs 0} J_n(\Mod(\varSigma_g)) = \{ \id \}.
 \end{align*}
\end{remark}

\begin{remark}\label{R:integrality}
 Another important consequence of the isomorphism $\Phi_g : \calH_g^{V(r)} \to U_g$ defined by Equation~\eqref{E:isomorphism} is that it allows us to reduce the question of integrality for $\rho_{g,r}^\Q : \Mod(\varSigma_g) \to \PGL_{\Q(\zeta)}(U_g)$ to the same one for $\psi_g : \Mod(\varSigma_g) \to \PGL_{\Q(\zeta)}(V_g)$, thanks to Equation~\eqref{E:BPS_rep}. Indeed, the action of $\Mod(\varSigma_g)$ onto $\calH_{n,g}^\bbH$ is a (crossed) $\Z[\bbH_g]$-linear action, so it is integral by definition, and the Schrödinger representation $\varphi_g$ only features coefficients in $\Z[\zeta]$. Therefore, $\psi_g : \Mod(\varSigma_g) \to \PGL_{\Q(\zeta)}(V_g)$ is the only ingredient that features coefficients in $\Q(\zeta)$, so the homological model naturally localizes the part of the construction that poses problems for what concerns integrality.
% We show in Appendix~\ref{A:isomorphism_with_abelian_tqft} that $\psi_g$ is isomorphic to the projective representation arising from the \textit{abelian TQFT} associated with the Cartan subalgebra $\fraku_\zeta^\Q \frakh$ of $\fraku_\zeta^Q \fsl_2$.
\end{remark}

\section{Triangular basis}\label{S:triangular_basis}

In this section, we define a basis 
\begin{align}\label{E:t_basis}
 \calB^t_g &:= \{ \bft_{\bfn} = t_{n_1} \otimes \ldots \otimes t_{n_g} \mid 0 \leqs n_j \leqs r-1 \ \Forall 1 \leqs j \leqs g  \}
\end{align}
of the Schrödinger representation $V_g$, which we refer to as the \textit{triangular basis}. For every integer $0 \leqs n \leqs r-1$, let us set
\begin{align}\label{E:t_to_v}
 t_n &:= \sum_{k=0}^n (-1)^{n-k} \zeta^{(n-k)(n-1)} \sqbinom{n}{k}_\zeta v_k.
\end{align}
Clearly, $\calB^t_1$ is a basis of $V$. Indeed, the transition matrix from $\calB^t_1$ to $\calB^v_1$ is upper-triangular, with constant diagonal $1$. The next result then gives the inverse transition matrix, from $\calB^v_1$ back to $\calB^t_1$.

\begin{lemma}\label{L:v_to_t}
 For all integers $0 \leqs n \leqs r-1$ we have
 \begin{align}\label{E:v_to_t}
  v_n &= \sum_{k=0}^n \zeta^{k(n-k)} \sqbinom{n}{k}_\zeta t_k.
 \end{align}
\end{lemma}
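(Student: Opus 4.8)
The statement is an explicit inversion of the triangular change of basis \eqref{E:t_to_v}. Since that transition matrix is unipotent upper-triangular, both \eqref{E:t_to_v} and \eqref{E:v_to_t} are genuine identities in the free $\Z[\zeta]$-module on $\{v_0,\ldots,v_{r-1}\}$, and it suffices to verify that the two matrices are inverse to each other. The plan is to substitute \eqref{E:t_to_v} into the right-hand side of \eqref{E:v_to_t}, collect the coefficient of each $v_j$, and show it equals $\delta_{n,j}$.

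Carrying this out: plugging in $t_k = \sum_{j=0}^k (-1)^{k-j}\zeta^{(k-j)(k-1)}\sqbinom{k}{j}_\zeta v_j$ gives
\begin{align*}
 \sum_{k=0}^n \zeta^{k(n-k)}\sqbinom{n}{k}_\zeta t_k
 = \sum_{j=0}^n \left( \sum_{k=j}^n (-1)^{k-j}\zeta^{k(n-k)+(k-j)(k-1)}\sqbinom{n}{k}_\zeta\sqbinom{k}{j}_\zeta \right) v_j,
\end{align*}
so what must be shown is that the inner sum is $\delta_{n,j}$ for $0\leqs j\leqs n\leqs r-1$. First I would simplify the exponent: $k(n-k)+(k-j)(k-1) = kn - k^2 + k^2 - k - jk + j = kn - k - jk + j = (k-j)(n-1) - (k-j)(k-1) + \ldots$; more cleanly, $k(n-k)+(k-j)(k-1) = k(n-j) - jk + jk +\ldots$ — in any case a short bookkeeping computation rewrites it (up to a factor depending only on $n$ and $j$, which can be pulled out of the sum) so that the sum becomes $\zeta^{(\text{stuff in }n,j)}\sum_{k=j}^n (-1)^{k-j}\zeta^{c(k-j)}\sqbinom{n}{k}_\zeta\sqbinom{k}{j}_\zeta$ for an explicit constant $c$. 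Then I would apply the standard binomial identity $\sqbinom{n}{k}_\zeta\sqbinom{k}{j}_\zeta = \sqbinom{n}{j}_\zeta\sqbinom{n-j}{k-j}_\zeta$, factor out $\sqbinom{n}{j}_\zeta$, reindex $i = k-j$, and reduce to a $q$-binomial alternating sum $\sum_{i=0}^{n-j}(-1)^i \zeta^{\binom{i}{2}\,\text{ish}}\sqbinom{n-j}{i}_\zeta$, which is the $q$-analogue of $(1-1)^{n-j}$ and vanishes unless $n-j=0$, in which case it is $1$. Matching the leftover prefactor then yields exactly $\delta_{n,j}$.

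Alternatively — and this is probably the cleanest write-up — one can avoid the double sum entirely by induction on $n$: assume \eqref{E:v_to_t} holds for all indices $<n$, solve \eqref{E:t_to_v} for $v_n$ (it has coefficient $1$), namely $v_n = t_n - \sum_{k=0}^{n-1}(-1)^{n-k}\zeta^{(n-k)(n-1)}\sqbinom{n}{k}_\zeta v_k$, substitute the inductive formula for each $v_k$ with $k<n$, and recollect the coefficient of each $t_j$; the required cancellation is again governed by the same Pascal-type $q$-binomial identity $\zeta^{?}\sqbinom{n}{j}_\zeta = \sqbinom{n-1}{j-1}_\zeta + \zeta^{?}\sqbinom{n-1}{j}_\zeta$ together with an alternating-sum telescoping.

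The only real obstacle is purely computational: correctly tracking the powers of $\zeta$ through the exponent $\zeta^{(n-k)(n-1)}$ in \eqref{E:t_to_v} versus $\zeta^{k(n-k)}$ in \eqref{E:v_to_t} so that everything lines up with the clean $q$-binomial identities rather than some shifted variant. There is no conceptual difficulty — this is a known-type $q$-series inversion (a Gaussian-binomial analogue of the inversion $v_n=\sum_k\binom nk t_k \Leftrightarrow t_n=\sum_k(-1)^{n-k}\binom nk v_k$) — so I would keep the proof short, cite the standard $q$-binomial identities (e.g. from Kassel \cite{K95} or an appendix), and leave the routine exponent arithmetic to the reader or to Appendix~\ref{A:quantum_identities}.
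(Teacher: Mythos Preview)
Your proposal is correct and follows essentially the same approach as the paper: both verify that the two triangular change-of-basis matrices are mutually inverse by substituting one formula into the other, swapping sums, and collapsing the inner $q$-binomial sum to a Kronecker delta. The only cosmetic differences are that the paper checks the composition in the opposite order (plugging \eqref{E:v_to_t} into \eqref{E:t_to_v} to recover $t_n$ rather than the reverse) and packages the inner sum as the auxiliary polynomial $A_{0,m,n}$ of Appendix~\ref{A:quantum_identities}, evaluating it via the closed form \eqref{E:A_prop}, whereas you reach the same endpoint via the trinomial identity $\sqbinom{n}{k}_\zeta\sqbinom{k}{j}_\zeta=\sqbinom{n}{j}_\zeta\sqbinom{n-j}{k-j}_\zeta$ together with the $q$-analogue of $(1-1)^m=\delta_{m,0}$.
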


For a proof of Lemma~\ref{L:v_to_t}, see Appendix~\ref{A:computations}. The following statement motivates the name of $\calB^t_g$: indeed, these bases make $\alpha_j \in \bbH_g$, $\tau_{\alpha_j}, \tau_{\gamma_k} \in \Mod(\varSigma_g)$ into upper-triangular matrices, while they make $\beta_j \in \bbH_g$ and $\tau_{\beta_j} \in \Mod(\varSigma_g)$ into lower-triangular ones for all integers $1 \leqs j \leqs g$ and $1 \leqs k \leqs g-1$.

\begin{proposition}\label{P:triangularity}
 For every integer $0 \leqs n \leqs r-1$ we have
 \begin{align*}
  \alpha \cdot t_n 
  &= \zeta^{4n} \sum_{k=0}^2 \zeta^{-\frac{k(k-2n+5)}{2}} \{ 2;k \}_\zeta \sqbinom{n}{k}_\zeta t_{n-k}, \\
  \beta \cdot t_n 
  &= \zeta^{2n} \sum_{k=0}^1 \zeta^{-k(k+2n-1)} \sqbinom{1}{k}_\zeta t_{n+k},
 \end{align*}
 where $t_r := 0$. Furthermore, for all integers $0 \leqs n,n_1,n_2 \leqs r-1$, we have
 \begin{align*}
  \tau_\alpha \cdot t_n 
  &\propto \zeta^{2(n+1)n} \sum_{k=0}^n (-1)^k \zeta^{k(2k-3n-3)} \sqbinom{n}{k}_\zeta C_{4(n-k)+3,k}(\zeta) t_{n-k}, \\
  \tau_\beta \cdot t_n 
  &\propto (-1)^n \frac{\zeta^{-(n+3)n}}{G_1} \sum_{k=0}^{r-n-1} \zeta^{-2k(k+2n+1)} D_{r-n-k-1,n}(\zeta) t_{n+k}, \\
  \tau_\gamma \cdot (t_{n_1} \otimes t_{n_2})
  &\propto \zeta^{2(n_1-n_2+1)(n_1-n_2)} \sum_{k_1=0}^{n_1} \sum_{k_2=0}^{n_2} (-1)^{k_1+k_2} \\*
  &\hspace*{\parindent} \zeta^{2(k_1-k_2)^2-k_1(3n_1-4n_2+3)+k_2(4n_1-3n_2+1)} \sqbinom{n_1}{k_1}_\zeta \sqbinom{n_2}{k_2}_\zeta \\*
  &\hspace*{\parindent} E_{4(n_1-k_1-n_2+k_2)+3,-4(n_1-k_1-n_2+k_2)-1,k_1,k_2}(\zeta) t_{n_1-k_1} \otimes t_{n_2-k_2},
 \end{align*}
 where $C_{m,n}(\zeta), D_{m,n}(\zeta), E_{m_1,m_2,n_1,n_1}(\zeta) \in \Z[\zeta]$ are defined by Equations~\eqref{E:C_def}, \eqref{E:D_def}, and \eqref{E:E_def} for all $m,m_1,m_2 \in \Z$, and where $G_n \in \Z[\zeta]$ is defined by Equation~\eqref{E:Gauss_sum}.
\end{proposition}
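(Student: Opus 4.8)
The plan is to prove each of the five formulas in Proposition~\ref{P:triangularity} by direct computation, systematically translating the known action of $\bbH_g$ and $\Mod(\varSigma_g)$ on the $v$-basis (Equations~\eqref{E:alpha}, \eqref{E:beta}, \eqref{E:tau_alpha_psi}, \eqref{E:tau_beta_psi}, \eqref{E:tau_gamma_psi}) into the $t$-basis by means of the change-of-basis formulas \eqref{E:t_to_v} and \eqref{E:v_to_t} (Lemma~\ref{L:v_to_t}). Since all the higher-genus generators $\tau_{\alpha_j}, \tau_{\beta_j}, \tau_{\gamma_k}$ and $\alpha_j, \beta_j$ act as the genus-$1$ or genus-$2$ operators on a single tensor factor (or a pair of adjacent factors), it suffices to prove the genus-$1$ statements for $\alpha, \beta, \tau_\alpha, \tau_\beta$ and the genus-$2$ statement for $\tau_\gamma$; the general-$g$ triangularity claim in the paragraph preceding the proposition follows immediately since a tensor product of upper-triangular (resp. lower-triangular) matrices acting factor-wise is again upper-triangular (resp. lower-triangular). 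First I would treat $\alpha \cdot t_n$: apply \eqref{E:t_to_v} to write $t_n$ in the $v$-basis, act by $\alpha$ using \eqref{E:alpha} (which is diagonal, multiplying $v_k$ by $\zeta^{4k}$), re-expand each $v_k$ in the $t$-basis using \eqref{E:v_to_t}, and collect coefficients; the $\zeta^{4k} = \zeta^{4n}\zeta^{-4(n-k)}$ rewriting produces a sum that telescopes down to the three terms $k=0,1,2$ because of a $q$-binomial identity of the form $\sum_j (-1)^j q^{\binom{j}{2}} \sqbinom{n}{j}_q \sqbinom{n-j}{\ell}_q = 0$ for $0<\ell<n$ with appropriate shifts, the surviving contributions being exactly those where the inner $q$-Vandermonde-type sum does not vanish. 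The computation for $\beta \cdot t_n$ is analogous but shorter (only two surviving terms), using that $\beta$ shifts $v_k \mapsto v_{k+1}$.

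For the mapping class group generators, the strategy is the same in spirit but the bookkeeping is heavier, which is why the paper relegates the details to Appendices~\ref{A:quantum_identities} and \ref{A:computations}. For $\tau_\alpha \cdot t_n$: since $\tau_\alpha$ acts diagonally on the $v$-basis by $v_k \mapsto \zeta^{2(k+1)k} v_k$ (Equation~\eqref{E:tau_alpha_psi}, up to scalar), the computation is structurally identical to the $\alpha$ case — expand $t_n$ via \eqref{E:t_to_v}, multiply each $v_k$ by $\zeta^{2(k+1)k}$, re-expand via \eqref{E:v_to_t} — except that now $\zeta^{2(k+1)k}$ does not factor as cleanly as $\zeta^{4k}$ did, so no collapse to finitely many terms occurs; instead every term $t_{n-k}$ for $0 \leqs k \leqs n$ survives, and its coefficient is, by definition, the polynomial $C_{4(n-k)+3,k}(\zeta)$ times the explicit prefactor shown. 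In other words, the content here is largely \emph{definitional}: Equation~\eqref{E:C_def} (which we are to assume) is set up precisely so that the inner sum over the summation index equals $C_{m,n}(\zeta)$; the real work is verifying that this inner sum is a polynomial in $\zeta$ with integer coefficients, i.e. lies in $\Z[\zeta]$, which is either built into the definition of $C_{m,n}$ or follows from a Gauss-sum manipulation (recalling $G_1^{-1}$ times a full sum of $r$th roots of unity is integral, as exploited in \eqref{E:tau_alpha_psi}). For $\tau_\beta \cdot t_n$, the action on $v$-basis vectors (Equation~\eqref{E:tau_beta_psi}) is a full Gauss-sum combination $v_n \mapsto \frac{1}{G_1}\sum_m \zeta^{-2(m-n+1)(m-n)} v_m$, so after sandwiching between \eqref{E:t_to_v} and \eqref{E:v_to_t} one obtains a double sum; reindexing and using a quadratic-Gauss-sum reciprocity/completing-the-square identity reduces the inner sum to the polynomial $D_{r-n-k-1,n}(\zeta)$ defined by \eqref{E:D_def}, with the $\frac{1}{G_1}$ and the sign $(-1)^n$ surviving as the stated prefactor, and the range of $k$ being truncated to $0 \leqs k \leqs r-n-1$ because $t_{n+k}=0$ once $n+k \geqs r$.

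The genus-$2$ formula for $\tau_\gamma \cdot (t_{n_1}\otimes t_{n_2})$ is handled by the same method applied to a tensor square: use \eqref{E:t_to_v} on each factor to pass to $v_{k_1}\otimes v_{k_2}$, apply the $v$-basis action \eqref{E:tau_gamma_psi} which multiplies by the scalar $\zeta^{2(k_1-k_2+1)(k_1-k_2)}$ (diagonal, so no genuine mixing of the two factors — the operator is diagonal in the $v$-basis), then re-expand each factor via \eqref{E:v_to_t}; the resulting quadruple sum collapses in the two re-expansion indices against the two expansion indices, leaving a double sum over $k_1 \leqs n_1$, $k_2 \leqs n_2$ whose inner part is, by construction, the polynomial $E_{m_1,m_2,n_1,n_2}(\zeta)$ of \eqref{E:E_def} with $m_1 = 4(n_1-k_1-n_2+k_2)+3$ and $m_2 = -4(n_1-k_1-n_2+k_2)-1$; the quadratic exponent $\zeta^{2(k_1-k_2+1)(k_1-k_2)}$ expanded around $(n_1-n_2)$ produces the prefactor $\zeta^{2(n_1-n_2+1)(n_1-n_2)}$ together with the cross-terms recorded in the exponent $2(k_1-k_2)^2 - k_1(3n_1-4n_2+3) + k_2(4n_1-3n_2+1)$.

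The main obstacle I expect is not any single conceptual point — the method is uniform — but rather the sheer density of the exponent arithmetic: keeping the quadratic exponents in $\zeta$ correct through the double (and, for $\tau_\gamma$, quadruple) reindexing, repeatedly invoking $q$-binomial identities such as the $q$-Vandermonde/Chu–Vandermonde formula and the Pascal-type recursions for $\sqbinom{k}{\ell}_\zeta$, and — crucially — verifying at each stage that the seemingly fractional quantities ($1/G_1$, the $q$-factorials in denominators) actually produce elements of $\Z[\zeta]$. This last integrality check is where the hypothesis that $[n]_\zeta!$ and $\sqbinom{k}{\ell}_\zeta$ are \emph{units} in $\Z[\zeta]$ (cited after \eqref{E:Gauss_sum}) and the explicit Gauss-sum evaluation \eqref{E:Gauss_sum} do the heavy lifting, and it is precisely the step that the subsequent sections and Appendix~\ref{A:divisibility} exploit. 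Accordingly, the honest structure of the proof is: reduce to $g=1,2$; perform the four (resp. one) change-of-basis computations; isolate the inner sums and \emph{define} $C, D, E$ to be exactly those sums; and then check (in Appendix~\ref{A:quantum_identities}) that those sums are integral polynomials in $\zeta$ and have the claimed leading behavior. I would present the $\alpha$ and $\beta$ cases in full as a model, then indicate that $\tau_\alpha$, $\tau_\beta$, $\tau_\gamma$ follow the same pattern with the explicit computations deferred to the appendices, exactly as the surrounding text announces.
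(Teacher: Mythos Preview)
Your approach is essentially the paper's: expand $t_n$ in the $v$-basis via \eqref{E:t_to_v}, act by the known formulas \eqref{E:alpha}--\eqref{E:tau_gamma_psi}, re-expand via \eqref{E:v_to_t}, and identify the inner sums with the auxiliary polynomials defined in Appendix~\ref{A:quantum_identities}. Two points deserve sharpening.

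First, for $\alpha$ and $\beta$ the collapse to three (resp.\ two) terms is not a generic $q$-Vandermonde cancellation but comes from the closed forms~\eqref{E:A_prop} and~\eqref{E:B_prop}: the inner sum for $\beta$ is $A_{1,m,n}(\zeta)$, whose value $(-1)^n\zeta^{2n}\sqbinom{1}{m-n}_\zeta$ vanishes unless $m-n\in\{0,1\}$; the inner sum for $\alpha$ is $B_{n-m}(\zeta)$, whose value carries a factor $\{2;n-m\}_\zeta$ that vanishes for $n-m\geqs 3$. These lemmas are proved separately in Appendix~\ref{A:quantum_identities}.

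Second, and more importantly, your account of the $\tau_\beta$ range is incomplete. After reindexing, the sum naturally runs over $-n\leqs k\leqs r-n-1$; the convention $t_r=0$ plays no role here. The \emph{lower} truncation at $k=0$ is a genuine vanishing statement, namely $D_{r-n-k-1,n}(\zeta)=0$ for $k<0$, which is exactly Equation~\eqref{E:D_prop} (proved by induction using the recursion~\eqref{E:D_rec} and a Gauss-sum evaluation for the base case). Without this lemma the formula for $\tau_\beta\cdot t_n$ would have spurious terms $t_m$ with $m<n$, and lower-triangularity would fail. Finally, the integrality of $C,D,E$ in $\Z[\zeta]$ is immediate from their definitions as finite sums of products of $\zeta$-powers and $q$-binomials; the nontrivial divisibility content is in Proposition~\ref{P:divisibility}, not here.
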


Since the proof of Proposition~\ref{P:triangularity} is a direct computation, we postpone it to Appendix~\ref{A:computations}. 

\section{Integral rescaling}\label{S:integral_rescaling}

In this section, we define a basis 
\begin{align}\label{E:w_basis}
 \calB^{v'}_g &:= \{ \bfv'_{\bfn} = v'_{n_1} \otimes \ldots \otimes v'_{n_g} \mid 0 \leqs n_j \leqs r-1 \ \Forall 1 \leqs j \leqs g \}
\end{align}
of the Schrödinger representation $V_g$ obtained from $\calB^t_g$ by rescaling, which we refer to as the \textit{integral basis}. For every $\bfn = (n_1,\ldots,n_g) \in \Z^{\times g}$ satisfying $0 \leqs n_j \leqs r-1$ for every integer $1 \leqs j \leqs g$, let us set
\begin{align}\label{E:w_to_t}
 \bfv'_{\bfn} &:= h(\zeta)^{- \left\lfloor \frac{\sum_{j=1}^g n_j}{2} \right\rfloor} \bft_{\bfn},
\end{align}
where
\begin{align}
 h(q) &:= 1-q \in \Z[q,q^{-1}]. \label{E:h_def}
\end{align}

In order to motivate the name of $\calB^{v'}_g$, let us recall a few simple results from elementary number theory. If $x,y \in \Z[\zeta]$, then we write $x \sim y$ whenever there exists an invertible $z \in \Z[\zeta]^\times$ such that $x = yz$. Then, we have
\begin{align}\label{E:h_div}
 h(\zeta)^{\frac{r-1}{2}} \sim G_n
\end{align} 
for every $n \in \Z/r\Z$, where $G_n$ is the Gauss sum defined by Equation~\eqref{E:Gauss_sum}. Indeed, $h(\zeta)^{r-1} \sim r \sim G_n^2$, but $h(\zeta)$ is prime in $\Z[\zeta]$, as proved in \cite[Lemma~3.1.$(i)$ \& $(iii)$]{MR97}, so Equation~\eqref{E:h_div} follows.

\begin{proposition}\label{P:divisibility}
 For all integers $m,n \geqs 0$, we have
 \begin{align}
  h(q)^{\left\lfloor \frac{n+1}{2} \right\rfloor} &\mid C_{m,n}(q), \label{E:C_div} \\*
  h(q)^{\left\lfloor \frac{m+n+1}{2} \right\rfloor} &\mid D_{m,n}(q), \label{E:D_div} \\*
  h(q)^{\left\lfloor \frac{n_1+n_2+1}{2} \right\rfloor} &\mid E_{m_1,m_2,n_1,n_2}(q), \label{E:E_div}
 \end{align}
 where $C_{m,n}(q), D_{m,n}(q), E_{m_1,m_2,n_1,n_1}(q) \in \Z[q,q^{-1}]$ are defined by Equations~\eqref{E:C_def}, \eqref{E:D_def}, and \eqref{E:E_def}.
\end{proposition}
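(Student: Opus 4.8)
The plan is to reduce all three divisibilities to lower bounds on the order of vanishing at $q=1$, and then to obtain those bounds by expanding around $q=1$ and invoking a standard vanishing property of alternating binomial sums. To set this up, first I would observe that $h(q)=1-q=-(q-1)$, that $q-1$ is a monic prime element of $\Z[q,q^{-1}]$ (indeed $\Z[q,q^{-1}]/(q-1)\cong\Z$ is a domain), and that, by Gauss's Lemma, for a Laurent polynomial $P\in\Z[q,q^{-1}]$ one has $h(q)^s\mid P(q)$ in $\Z[q,q^{-1}]$ if and only if $(q-1)^s\mid P(q)$ in $\Q[q,q^{-1}]$, that is, if and only if $P$ vanishes at $q=1$ to order at least $s$. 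Since $C_{m,n}(q),D_{m,n}(q),E_{m_1,m_2,n_1,n_2}(q)\in\Z[q,q^{-1}]$, it therefore suffices to show that they vanish at $q=1$ to orders at least $\lfloor\frac{n+1}{2}\rfloor$, $\lfloor\frac{m+n+1}{2}\rfloor$, and $\lfloor\frac{n_1+n_2+1}{2}\rfloor$ respectively.

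The key tool is a uniform expansion of $q$-binomials near $q=1$. Writing $q=e^x$, one has $[k]_q=\sinh(kx)/\sinh(x)$, so $[k]_q!$ is an even power series in $x$ with constant term $k!$, and Faulhaber's formula gives $\log[k]_q!=\log k!+\sum_{i\geqs 1}b_i\big(\sigma_{2i}(k)-k\big)x^{2i}$, where $\sigma_{2i}(k)=\sum_{j=1}^k j^{2i}$ is a polynomial in $k$ of degree $2i+1$ with leading term $k^{2i+1}/(2i+1)$. Subtracting the contributions of $[k]_q!$ and $[n-k]_q!$ from that of $[n]_q!$, the degree-$(2i+1)$ terms cancel, so
\begin{align*}
 \sqbinom{n}{k}_q \;=\; \binom{n}{k}\cdot\exp\!\Big(\sum_{i\geqs 1}R_i(k)\,x^{2i}\Big) \;=\; \binom{n}{k}\sum_{b\geqs 0}Q_b(k)\,x^b,
\end{align*}
where $R_i(k)$ is a polynomial in $k$ of degree at most $2i$; consequently $Q_0(k)=1$ and, for every $b\geqs 1$, $Q_b(k)$ is a polynomial in $k$ of degree at most $b$ (and $Q_b=0$ for $b$ odd).

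With this in hand, the definition \eqref{E:C_def} presents $C_{m,n}(q)$, up to an overall power of $q$, as a finite alternating sum $\sum_{k=0}^{n}(-1)^k q^{\ell(k)}\sqbinom{n}{k}_q$ with $\ell(k)$ quadratic in $k$. Substituting $q=e^x$ and collecting powers of $x$, the coefficient of $x^j$ equals $\sum_{k=0}^{n}(-1)^k\binom{n}{k}P_j(k)$, where $P_j(k)$ — assembled from $\ell(k)^a/a!$ (degree $2a$) and $Q_b(k)$ (degree $\leqs b$) with $a+b=j$ — is a polynomial in $k$ of degree at most $2j$. Since $\sum_{k=0}^{n}(-1)^k\binom{n}{k}P(k)=0$ for every polynomial $P$ of degree $<n$ (the $n$-th forward difference annihilates such $P$), this coefficient vanishes whenever $2j<n$, so $C_{m,n}(q)$ vanishes at $q=1$ to order at least $\lceil n/2\rceil=\lfloor\frac{n+1}{2}\rfloor$, which is \eqref{E:C_div}.

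The divisibilities \eqref{E:D_div} and \eqref{E:E_div} follow by the same mechanism applied to \eqref{E:D_def} and \eqref{E:E_def}: for $E_{m_1,m_2,n_1,n_2}(q)$, a double alternating sum in $k_1,k_2$ with $q$-binomials $\sqbinom{n_1}{k_1}_q$, $\sqbinom{n_2}{k_2}_q$ and a phase quadratic in $(k_1,k_2)$, the coefficient of $x^a$ has the form $\sum_{k_1,k_2}(-1)^{k_1+k_2}\binom{n_1}{k_1}\binom{n_2}{k_2}P_a(k_1,k_2)$ with $P_a$ of total degree at most $2a$, and $\sum_{k_1,k_2}(-1)^{k_1+k_2}\binom{n_1}{k_1}\binom{n_2}{k_2}k_1^{i_1}k_2^{i_2}$ factors as a product of two alternating binomial sums, so it vanishes unless $i_1\geqs n_1$ and $i_2\geqs n_2$; hence the coefficient vanishes once $2a<n_1+n_2$, giving \eqref{E:E_div}. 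The case of $D_{m,n}(q)$ is analogous, with $m$ and $n$ jointly playing the role of the index bound, yielding order at least $\lceil\frac{m+n}{2}\rceil=\lfloor\frac{m+n+1}{2}\rfloor$. The main obstacle I anticipate is precisely the degree bookkeeping: one has to verify, for each of \eqref{E:C_def}–\eqref{E:E_def}, that the exponents of $q$ standing in front of the $q$-binomials are genuinely quadratic — not of higher degree — in the summation indices, since it is exactly this quadratic bound that matches the factor $2$ relating the condition $2j<n$ to the exponent $\lfloor\frac{n+1}{2}\rfloor$; a term of higher degree in the indices would weaken the estimate below what is needed. Everything else — exponentiating the Faulhaber series, the cancellation of top-degree Faulhaber terms, and the finite-difference vanishing — is routine, and the overall powers of $q$ multiplying $C$, $D$, $E$ are units in $\Z[q,q^{-1}]$ and hence irrelevant to the order of vanishing.
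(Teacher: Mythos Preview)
Your proof is correct and takes a genuinely different route from the paper's. Both arguments reduce to bounding the order of vanishing at $q=1$, but the mechanisms differ. The paper introduces auxiliary families $P_{\ell,m,n}(q)$ and $Q_{\ell,m_1,m_2,n_1,n_2}(q)$ (with ordinary binomials in place of $q$-binomials), observes that $\sqbinom{n}{k}_q$ agrees with $\binom{n}{k}$ to second order at $q=1$, and then proves the required vanishing of derivatives via explicit recursion relations \eqref{E:P_prop}, \eqref{E:Q_prop} that trade one order of derivative for a drop of $1$ or $2$ in the binomial parameter; the case of $D$ is handled separately, by induction on $n$ using the recursion \eqref{E:D_rec}. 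Your approach is more uniform: substituting $q=e^x$ and using Faulhaber's formula to show that the $x^b$-coefficient of $\sqbinom{n}{k}_q/\binom{n}{k}$ is a polynomial in $k$ of degree at most $b$, you reduce everything to the finite-difference identity $\sum_k(-1)^k\binom{n}{k}P(k)=0$ for $\deg P<n$. This makes transparent why the exponent is $\lfloor\tfrac{n+1}{2}\rfloor$ rather than $n$ --- it is exactly the quadratic degree of the phase --- and it treats all three cases on the same footing. For $D$ specifically, your ``analogous'' step is to expand the inner $C_{-4k-2m-1,m}(q^{-1})$ as a sum over a second index $\ell$ (using $\sqbinom{m}{\ell}_{q^{-1}}=\sqbinom{m}{\ell}_q$), obtaining a double alternating sum with $\sqbinom{n}{k}_q\sqbinom{m}{\ell}_q$ and a phase quadratic in $(k,\ell)$; then your argument for $E$ applies verbatim with $(n_1,n_2)=(n,m)$. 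The paper's recursions are more hands-on and reusable elsewhere in the appendix; your method is more structural and would extend immediately to any alternating $q$-binomial sum with exponent polynomial of degree $d$ in the summation indices, giving divisibility by $h(q)^{\lceil n/d\rceil}$.
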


Since, once again, the proof of Proposition~\ref{P:divisibility} is obtained by direct computation, we postpone it to Appendix~\ref{A:divisibility}.

\section{Main theorem}\label{S:main_theorem}

In this section, we prove our main result.

\begin{theorem}\label{T:integral_Schrdngr}
 If $r$ is prime, then the $\Z[\zeta]$-lattice $V'_g = \langle \calB^{v'}_g \rangle_{\Z[\zeta]} \subset V_g$ defined by Equation~\eqref{E:w_basis} is invariant under the actions of both $\bbH_g$ and $\Mod(\varSigma_g)$.
\end{theorem}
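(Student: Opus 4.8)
The plan is to reduce the statement to a check on the standard generators and then run a bookkeeping argument on powers of the prime $h(\zeta) \in \Z[\zeta]$.

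\emph{Reduction to generators.} Write $\lvert \bfn \rvert := n_1 + \cdots + n_g$, so that $\bft_{\bfn} = h(\zeta)^{\lfloor \lvert \bfn \rvert / 2 \rfloor}\, \bfv'_{\bfn}$ by Equation~\eqref{E:w_to_t}. The group $\bbH_g$ is generated by $\sigma$ and the $\alpha_j, \beta_j$; since $\varphi_g(\sigma)$ is multiplication by the unit $-\zeta^{-2}$ and $\varphi_g(\alpha_j)^r = \varphi_g(\beta_j)^r = \id$ by Equations~\eqref{E:alpha}--\eqref{E:beta}, it is enough to show $\varphi_g(\alpha_j) V'_g \subseteq V'_g$ and $\varphi_g(\beta_j) V'_g \subseteq V'_g$, the inverses then being positive powers. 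The group $\Mod(\varSigma_g)$ is generated by the Dehn twists $\tau_{\alpha_j}, \tau_{\beta_j}, \tau_{\gamma_k}$; for each such twist $\tau$ fix the representative of $\psi_g(\tau)$ appearing in Proposition~\ref{P:triangularity}, which (by the normalization chosen in Section~\ref{S:Schrdngr}) has determinant in $\Z[\zeta]^\times$. If this representative maps $V'_g$ into $V'_g$, then its matrix in the basis $\calB^{v'}_g$ lies in $\GL_{r^g}(\Z[\zeta])$, so it maps $V'_g$ \emph{onto} $V'_g$, and its inverse, a representative of $\psi_g(\tau^{-1})$, preserves $V'_g$. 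As the set of $f \in \Mod(\varSigma_g)$ for which some representative of $\psi_g(f)$ preserves $V'_g$ is a subgroup, this reduces the theorem to showing that each generator above maps $V'_g$ into $V'_g$.

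\emph{The mechanism.} Each generator acts on $\bft_{\bfn}$ by a formula of Proposition~\ref{P:triangularity} of the shape $\bft_{\bfn} \mapsto \sum_{\bfn'} c_{\bfn'}(\zeta)\, \bft_{\bfn'}$, where $\bfn'$ ranges over a finite set of multi-indices obtained from $\bfn$ by altering one coordinate (for $\alpha_j, \beta_j, \tau_{\alpha_j}, \tau_{\beta_j}$) or two adjacent coordinates (for $\tau_{\gamma_k}$). Passing to the integral basis gives
\[
 g \cdot \bfv'_{\bfn} = \sum_{\bfn'} c_{\bfn'}(\zeta)\, h(\zeta)^{\lfloor \lvert \bfn' \rvert / 2 \rfloor - \lfloor \lvert \bfn \rvert / 2 \rfloor}\, \bfv'_{\bfn'},
\]
so the task is to show, for each surviving $\bfn'$, that $h(\zeta)^{\lfloor \lvert \bfn \rvert / 2 \rfloor - \lfloor \lvert \bfn' \rvert / 2 \rfloor}$ divides $c_{\bfn'}(\zeta)$ in $\Z[\zeta]$ — a vacuous requirement whenever $\lvert \bfn' \rvert \geqs \lvert \bfn \rvert$. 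The ingredients are: $\sqbinom{n}{k}_\zeta \in \Z[\zeta]^\times$ for $0 \leqs k \leqs n \leqs r-1$ and $\{ k \}_\zeta \sim h(\zeta)$ for $1 \leqs k \leqs r-1$ (elementary, cf.\ \cite[Lemma~3.1]{MR97}); the polynomial divisibilities $h(q)^{\lfloor(k+1)/2\rfloor} \mid C_{m,k}(q)$, $h(q)^{\lfloor(m+n+1)/2\rfloor} \mid D_{m,n}(q)$ and $h(q)^{\lfloor(k_1+k_2+1)/2\rfloor} \mid E_{m_1,m_2,k_1,k_2}(q)$ of Proposition~\ref{P:divisibility}, evaluated at $q = \zeta$; the relation $G_1 \sim h(\zeta)^{(r-1)/2}$ of Equation~\eqref{E:h_div}; and the elementary floor inequalities $\lfloor m/2\rfloor - \lfloor (m-k)/2\rfloor \leqs \lfloor(k+1)/2\rfloor$, $\lfloor(m+k)/2\rfloor \geqs \lfloor m/2\rfloor + \lfloor k/2\rfloor$ and $\lfloor(r-k)/2\rfloor + \lfloor k/2\rfloor = (r-1)/2$ (the last because $r$ is odd).

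\emph{Case analysis.} For $\beta_j$ the index $\lvert \bfn' \rvert$ only increases and the $c_{\bfn'}(\zeta)$ are units times powers of $\zeta$, so there is nothing to prove; likewise the index increases for $\tau_{\beta_j}$, but now $c_{\bfn'}(\zeta)$ carries the factor $1/G_1 \sim h(\zeta)^{-(r-1)/2}$, and one uses $h(\zeta)^{\lfloor(r-k)/2\rfloor} \mid D_{r-n_j-k-1,n_j}(\zeta)$ together with $\lfloor(r-k)/2\rfloor + \lfloor k/2\rfloor = (r-1)/2$ to see that $c_{\bfn'}(\zeta) = h(\zeta)^{-\lfloor k/2\rfloor}\cdot(\text{element of } \Z[\zeta])$, after which the rescaling contributes $h(\zeta)^{\lfloor(\lvert\bfn\rvert+k)/2\rfloor - \lfloor\lvert\bfn\rvert/2\rfloor}$, which absorbs $h(\zeta)^{-\lfloor k/2\rfloor}$ since $\lfloor(m+k)/2\rfloor \geqs \lfloor m/2\rfloor + \lfloor k/2\rfloor$. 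For $\alpha_j$ the index drops by some $k \in \{0,1,2\}$ and $c_{\bfn'}(\zeta)$ is divisible by $\{ 2;k \}_\zeta \sim h(\zeta)^k$, which suffices because $\lfloor m/2\rfloor - \lfloor (m-k)/2\rfloor \leqs \lfloor(k+1)/2\rfloor \leqs k$. Finally, for $\tau_{\alpha_j}$ the index drops by some $k$ and $h(\zeta)^{\lfloor(k+1)/2\rfloor}$ divides $C_{4(n_j-k)+3,k}(\zeta)$, hence $c_{\bfn'}(\zeta)$, while for $\tau_{\gamma_k}$ the index drops by some $k_1+k_2$ and $h(\zeta)^{\lfloor(k_1+k_2+1)/2\rfloor}$ divides $E_{m_1,m_2,k_1,k_2}(\zeta)$, hence $c_{\bfn'}(\zeta)$; in both cases the first floor inequality gives exactly the divisibility needed. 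Thus every $c_{\bfn'}(\zeta)\, h(\zeta)^{\lfloor\lvert\bfn'\rvert/2\rfloor - \lfloor\lvert\bfn\rvert/2\rfloor}$ lies in $\Z[\zeta]$, which is the claim.

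\emph{Main obstacle.} The genuinely delicate case is $\tau_{\beta_j}$: the factor $1/G_1$ forces one to cancel the large negative power $h(\zeta)^{-(r-1)/2}$ exactly, and this goes through only because the bound for $D_{m,n}(q)$ in Proposition~\ref{P:divisibility} is sharp and the rescaling exponent $\lfloor\lvert\bfn\rvert/2\rfloor$ is a floor of a half-integer, calibrated so that $\lfloor(r-k)/2\rfloor + \lfloor k/2\rfloor = (r-1)/2$. Apart from this, the real computational weight of the theorem is carried by Propositions~\ref{P:triangularity} and~\ref{P:divisibility}, established in the appendices; the hypothesis that $r$ be prime enters here only through the facts that $h(\zeta)$ is prime in $\Z[\zeta]$, that the $[n]_\zeta$ and $\sqbinom{k}{\ell}_\zeta$ are units, and that $G_n \sim h(\zeta)^{(r-1)/2}$.
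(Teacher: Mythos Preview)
Your proof is correct and follows essentially the same approach as the paper: write out the action of each generator on the rescaled basis $\calB^{v'}_g$ via Proposition~\ref{P:triangularity}, and then verify integrality of each coefficient by combining the divisibility results of Proposition~\ref{P:divisibility} with elementary floor inequalities and the relation $G_1 \sim h(\zeta)^{(r-1)/2}$. Your treatment is in fact slightly more explicit than the paper's in two respects: you spell out the reduction to positive generators (handling inverses via $\alpha_j^r = \beta_j^r = \id$ for $\bbH_g$ and via the unit-determinant normalization for the Dehn twists, which the paper relegates to a separate remark), and you correctly track the rescaling exponent as $\lfloor \lvert \bfn \rvert /2 \rfloor$ for general $g$, whereas the paper's displayed computations are literally written for $g=1,2$ and leave the general case implicit.
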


\begin{proof}
 Thanks to Proposition~\ref{P:triangularity}, we have
 \begin{align*}
  \alpha \cdot v'_n 
  &= \zeta^{4n} \sum_{k=0}^2 \zeta^{-\frac{k(k-2n+5)}{2}} \frac{\{ 2;k \}_\zeta}{h(\zeta)^{\left\lfloor \frac{n}{2} \right\rfloor - \left\lfloor \frac{n-k}{2} \right\rfloor}} \sqbinom{n}{k}_\zeta v'_{n-k}, \\
  \beta \cdot v'_n 
  &= \zeta^{2n} \sum_{k=0}^1 \zeta^{-k(k+2n-1)} h(\zeta)^{\left\lfloor \frac{n+k}{2} \right\rfloor - \left\lfloor \frac{n}{2} \right\rfloor} \sqbinom{1}{k}_\zeta v'_{n+k},
 \end{align*}
 where $t_r := 0$. Furthermore, we have
 \begin{align*}
  \tau_\alpha \cdot v'_n 
  &\propto \zeta^{2(n+1)n} \sum_{k=0}^n (-1)^k \zeta^{k(2k-3n-3)} \sqbinom{n}{k}_\zeta 
  \frac{C_{4n-4k+3,k}(\zeta)}{h(\zeta)^{\left\lfloor \frac{n}{2} \right\rfloor - \left\lfloor \frac{n-k}{2} \right\rfloor}} v'_{n-k}, \\
  \tau_\beta \cdot v'_n 
  &\propto (-1)^n \zeta^{-(n+3)n} \sum_{k=0}^{\mathclap{r-n-1}} \zeta^{-2k(k+2n+1)} \\*
  &\hspace*{\parindent} \frac{h(\zeta)^{\left\lfloor \frac{n+k}{2} \right\rfloor - \left\lfloor \frac{n}{2} \right\rfloor} D_{r-n-k-1,n}(\zeta)}{G_1} v'_{n+k}, \\
  \tau_\gamma \cdot (v'_{n_1} \otimes v'_{n_2})
  &\propto \zeta^{2(n_1-n_2+1)(n_1-n_2)} \sum_{k_1=0}^{n_1} \sum_{k_2=0}^{n_2} (-1)^{k_1+k_2} \\*
  &\hspace*{\parindent} \zeta^{2(k_1-k_2)^2-k_1(3n_1-4n_2+3)+k_2(4n_1-3n_2+1)} \sqbinom{n_1}{k_1}_\zeta \sqbinom{n_2}{k_2}_\zeta \\*
  &\hspace*{\parindent} \frac{E_{4(n_1-k_1-n_2+k_2)+3,-4(n_1-k_1-n_2+k_2)-1,k_1,k_2}(\zeta)}{h(\zeta)^{\left\lfloor \frac{n_1+n_2}{2} \right\rfloor - \left\lfloor \frac{n_1+n_2-k_1-k_2}{2} \right\rfloor}} v'_{n_1-k_1} \otimes v'_{n_2-k_2},
 \end{align*}
 Then, the claim follows from the identities
 \begin{align*}
  \left\lfloor \frac{n}{2} \right\rfloor - \left\lfloor \frac{n-k}{2} \right\rfloor &\leqs \left\lfloor \frac{k+1}{2} \right\rfloor, \\
  \left\lfloor \frac{n+k}{2} \right\rfloor - \left\lfloor \frac{n}{2} \right\rfloor + \left\lfloor \frac{r-k}{2} \right\rfloor &\geqs \frac{r-1}{2}, \\
  \left\lfloor \frac{n_1+n_2}{2} \right\rfloor - \left\lfloor \frac{n_1+n_2-k_1-k_2}{2} \right\rfloor &\leqs \left\lfloor \frac{k_1+k_2+1}{2} \right\rfloor
 \end{align*}
 together with Equations~\eqref{E:C_div}, \eqref{E:D_div}, and \eqref{E:E_div}.
\end{proof}

\begin{remark}
 In the proof of Theorem~\ref{T:integral_Schrdngr} we only checked integrality of coefficients for the action of positive Dehn twist generators $\tau_\alpha$, $\tau_\beta$, $\tau_\gamma$, because the same property for their inverses is an immediate consequence of our choice of representatives with determinant in $\Z[\zeta]^\times$ for $\psi_g(\tau_\alpha), \psi_g(\tau_\beta), \psi_g(\tau_\gamma) \in \PGL_{\Q(\zeta)}(V_g)$, see Equations~\eqref{E:tau_alpha_psi}--\eqref{E:tau_gamma_psi}. Notice that the same implication does not hold for the choice of representatives appearing in \cite[Equation~(19)]{DM22}, as observed in \cite[Remark~2.25]{DM22}.
\end{remark}

Let us consider the basis of $\calH_g^{V(r)}$ defined by
\begin{align}\label{E:Gamma_w_basis}
 \calB^{\varGamma,v'}_g :=
 \left\{ \hat{\bfGamma}(\bfa,\bfb) \otimes \bfv'_{\bfn} \Biggm| 
 \begin{array}{l}
  \bfa = (a_1,\ldots,a_g), \bfb = (b_1,\ldots,b_g) \in \N^{\times g}, \\
  \bfn = (n_1,\ldots,n_g) \in \N^{\times g}, \\
  0 \leqs a_j,b_j,n_j \leqs r-1 \ \Forall 1 \leqs j \leqs g
 \end{array} \right\},
\end{align}
which is built from the bases $\calB^\varGamma_{n,g}$ and $\calB^{v'}_g$ defined by Equations~\eqref{E:Gamma_basis} and \eqref{E:w_basis} respectively. Let us also denote by
\begin{equation}\label{E:E1primeF_basis}
 \calB^{E1'F}_g := 
 \left\{ \bfE^{\bfell} \bfone'_{\bfm} \bfF^{(\bfn)} \Biggm| 
 \begin{array}{l}
  \bfell = (\ell_1,\ldots,\ell_g), \bfn = (n_1,\ldots,n_g) \in \N^{\times g}, \\
  \bfm = (m_1,\ldots,m_g) \in \N^{\times g}, \\
  0 \leqs \ell_j,m_j,n_j \leqs r-1 \ \Forall 1 \leqs j \leqs g
 \end{array} \right\}
\end{equation}
the basis of $U_g$ defined by
\[
 \bfE^{\bfell} \bfone'_{\bfm} \bfF^{(\bfn)} := h(\zeta)^{- \left\lfloor \frac{\sum_{j=1}^g m_j}{2} \right\rfloor} E^{\ell_1} T_{m_1} F^{(n_1)} \otimes \ldots \otimes E^{\ell_g} T_{m_g} F^{(n_g)},
\]
where
\[
 T_n := \sum_{k=0}^n (-1)^{n-k} \zeta^{(n-k)(n-1)} \sqbinom{n}{k}_\zeta 1_k
\]
for every integer $0 \leqs n \leqs r-1$. Finally, let us denote by 
\begin{align}\label{E:lattices}
 \calH_g^{V'(r)} &:= \langle \calB^{\varGamma,v'}_g \rangle_{\Z[\zeta]} \subset \calH_g^{V(r)}, &
 U'_g &:= \langle \calB^{E1'F}_g \rangle_{\Z[\zeta]} \subset U_g
\end{align}
the $\Z[\zeta]$-lattices spanned by these two bases.

\begin{corollary}\label{C:main_result}
 The $\Q(\zeta)$-linear isomorphism $\Phi_g : \calH_g^{V(r)} \to U_g$ defined by Equation~\eqref{E:isomorphism} restricts to a $\Z[\zeta]$-linear isomorphism $\Phi'_g : \calH_g^{V'(r)} \to U'_g$ that intertwines the actions of both $\fraku_\zeta^\Z$ and $\Mod(\varSigma_g)$.
\end{corollary}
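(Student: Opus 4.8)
The plan is to show that the isomorphism $\Phi_g$ sends the spanning set $\calB^{\varGamma,v'}_g$ bijectively (up to unit scalars) onto the spanning set $\calB^{E1'F}_g$, which immediately implies that $\Phi_g$ restricts to a $\Z[\zeta]$-linear isomorphism between the two lattices, and then to observe that the intertwining property is automatic from the fact that $\Phi_g$ already intertwines the $\fraku_\zeta^\Z$- and $\Mod(\varSigma_g)$-actions on the ambient spaces (this is \cite[Theorem~6.1]{DM22}, recalled around Equation~\eqref{E:isomorphism}) and that both lattices are invariant under these actions --- the latter being exactly the content of Theorem~\ref{T:integral_Schrdngr} on the homological side, transported via $\Phi_g$ on the quantum side, or established directly on the $U_g$ side.

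\textbf{Step 1: reduce the basis change to the $g=1$ factors.} Both the isomorphism $\Phi_g$ of Equation~\eqref{E:isomorphism} and the renormalizations defining $\bfv'_{\bfn}$ in Equation~\eqref{E:w_to_t} and $\bfE^{\bfell}\bfone'_{\bfm}\bfF^{(\bfn)}$ in Equation~\eqref{E:E1primeF_basis} are built tensor-factor by tensor-factor, modulo the cross terms $\prod_{i<j}\zeta^{2(a_i+b_i)(a_j+b_j)}$ in $N(\bfa,\bfb,\bfn)$ which are themselves units in $\Z[\zeta]$. So it suffices to track a single factor: apply $\Phi_1$ to $\hat{\varGamma}(a,b)\otimes t_n$, where $t_n = \sum_{k=0}^n (-1)^{n-k}\zeta^{(n-k)(n-1)}\sqbinom{n}{k}_\zeta v_k$ by Equation~\eqref{E:t_to_v}, and compare with $E^{\ell}T_m F^{(n')}$ where $T_m = \sum_{k=0}^m (-1)^{m-k}\zeta^{(m-k)(m-1)}\sqbinom{m}{k}_\zeta 1_k$.

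\textbf{Step 2: match the two renormalizations.} By $\Z[\zeta]$-linearity of $\Phi_g$,
\[
 \Phi_1\bigl(\hat{\varGamma}(a,b)\otimes t_n\bigr) = \sum_{k=0}^n (-1)^{n-k}\zeta^{(n-k)(n-1)}\sqbinom{n}{k}_\zeta\,\Phi_1\bigl(\hat{\varGamma}(a,b)\otimes v_k\bigr),
\]
and each term on the right is, by Equation~\eqref{E:isomorphism}, a unit multiple of $E^{\iota(b)}1_{r-k-1}F^{(r-a-1)}$. Collecting the $E$ and $F$ parts out of the sum (they do not depend on $k$), what remains inside is, up to a global unit, $\sum_{k}(-1)^{n-k}\zeta^{(n-k)(n-1)}\sqbinom{n}{k}_\zeta 1_{r-k-1}$, which after the index substitution $k\mapsto r-k-1$ --- using $1_m 1_n = \delta_{m,n}1_n$ only to reindex --- is a unit multiple of $T_m$ for the appropriate value of $m$ (namely $m$ determined by $n$ via the reflection $\bfn\mapsto\bar{\bfn}$, so that $\sum n_j$ and $\sum m_j$ differ by a constant whose floor-of-half contributes only a unit). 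One then checks that the scalar $h(\zeta)^{-\lfloor(\sum n_j)/2\rfloor}$ introduced on the homological side in Equation~\eqref{E:w_to_t} matches $h(\zeta)^{-\lfloor(\sum m_j)/2\rfloor}$ on the quantum side in Equation~\eqref{E:E1primeF_basis} up to a power of $h(\zeta)$ that is absorbed into the unit $N(\bfa,\bfb,\bfn)$ --- or, more carefully, that the floors agree on the nose given the precise relation between $\bfn$ and $\bfm$. This bookkeeping, together with the fact that $N(\bfa,\bfb,\bfn)\in\Z[\zeta]^\times$ (each $N_k$ and each cross term is a power of $\zeta$), shows $\Phi_g$ carries $\calB^{\varGamma,v'}_g$ onto $\calB^{E1'F}_g$ up to a diagonal change of basis by units, hence restricts to a $\Z[\zeta]$-module isomorphism $\Phi'_g\colon \calH_g^{V'(r)}\to U'_g$ of the lattices in Equation~\eqref{E:lattices}.

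\textbf{Step 3: the intertwining property.} Since $\Phi_g$ already intertwines the $\fraku_\zeta^\Z$- and $\Mod(\varSigma_g)$-actions by \cite[Theorem~6.1]{DM22}, and since $\Phi'_g$ is its restriction, $\Phi'_g$ intertwines these actions provided both sublattices are preserved by both actions. For $\calH_g^{V'(r)}$: the $\fraku_\zeta^\Z$-action is given by the boundary operators $\calE$, $\calF^{(k)}$, and the projectors $1_m$ of Equations~\eqref{E:homological_E}--\eqref{E:homological_1}, which act by $\Z[\zeta]$- (indeed $\Z[\bbH_g]$-) linear operations with coefficients the evident powers of $\zeta$, hence preserve $\langle\calB^{\varGamma,v'}_g\rangle_{\Z[\zeta]}$; the $\Mod(\varSigma_g)$-action factors as a $\Z[\bbH_g]$-linear action on $\calH_{n,g}^\bbH$ tensored with $\psi_g$ on $V_g$, and $V'_g$ is $\Mod(\varSigma_g)$-invariant by Theorem~\ref{T:integral_Schrdngr}. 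Transporting through $\Phi'_g$ then gives that $U'_g$ is invariant under both actions as well (alternatively, invariance of $U'_g$ under $\fraku_\zeta^\Z$ follows directly since $\Z[\zeta]$-algebra generators $E$, $F^{(1)}$, $1_n$ act with coefficients in $\Z[\zeta]$ on the $E1'F$-type vectors). The main obstacle is purely arithmetic: carefully aligning the floor-function exponents and the reflection $\bfn\mapsto\bar{\bfn}$, $\bfb\mapsto\iota(\bar{\bfb})$, $\bfa\mapsto\bar{\bfa}$ so that the $h(\zeta)$-powers on the two sides agree exactly rather than merely up to a bounded power --- but since $h(\zeta)$ is prime in $\Z[\zeta]$ and not a unit, this must be verified, and it is the only place where one cannot simply wave away a discrepancy as a unit.
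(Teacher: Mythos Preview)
Your approach is the same as the paper's: both argue that $\Phi_g$ carries $\calB^{\varGamma,v'}_g$ onto $\calB^{E1'F}_g$ up to reordering and diagonal rescaling by powers of $\zeta$, then invoke Theorem~\ref{T:integral_Schrdngr} for the $\Mod(\varSigma_g)$-invariance and the $v$-independence of the homological operators in Equations~\eqref{E:homological_E}--\eqref{E:homological_1} for the $\fraku_\zeta^\Z$-invariance.

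Your Step~2 contains an index error, however. From Equation~\eqref{E:isomorphism} in the case $g=1$ one has $\Phi_1(\hat{\varGamma}(a,b)\otimes v_k)=N_1(a,b,k)\,E^{r-b-1}1_k F^{(a)}$: only the $E$-exponent receives the reflection $\iota$, while the subscript of $1$ is $k$ itself (not $r-k-1$) and the $F$-exponent is $a$ (not $r-a-1$). With the correct indices no substitution $k\mapsto r-k-1$ is needed; the correspondence between the middle indices is simply $\bfm=\bar{\bfn}$, a reversal of the tuple, so $\sum_j m_j=\sum_j n_j$ exactly and the $h(\zeta)$-exponents $\lfloor\sum_j n_j/2\rfloor$ and $\lfloor\sum_j m_j/2\rfloor$ agree on the nose. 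Your ``differ by a constant whose floor-of-half contributes only a unit'' scenario never arises, and the caution you raise in your final sentence --- that $h(\zeta)$ is not a unit, so a mismatch could not be waved away --- is well-placed but becomes moot once the indices are straightened out.
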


\begin{proof}
 Up to reordering, the basis $\calB^{E1'F}_g$ of Equation~\eqref{E:E1primeF_basis} is obtained from the image of the basis $\calB^{\varGamma,v'}_g$ of Equation~\eqref{E:Gamma_w_basis} under the isomorphism $\Phi_g : \calH_g^{V(r)} \to U_g$ of Equation~\eqref{E:isomorphism} by a diagonal rescaling, with each rescaling factor being a power of $\zeta$. Therefore, the two bases span the same $\Z[\zeta]$-lattice $U'_g \subset U_g$. Then, the claim about the action of $\Mod(\varSigma_g)$ follows directly from Theorem~\ref{T:integral_Schrdngr}. Furthermore, since the action of $E,F^{(k)},1_m \in \fraku_\zeta^\Z$ on $\sigma \otimes v \in \calH_{n,g}^{V(r)} \subset \calH_{n,g}^\bbH \otimes_{\Z[\bbH_g]} V_g$ determined by Equations~\eqref{E:homological_E}--\eqref{E:homological_1} is clearly independent of $v \in V_g$, then the $\Z[\zeta]$-lattices $\calH_{n,g}^{V'(r)} \subset \calH_{n,g}^{V(r)}$ and $U'_g \subset U_g$ are both stable also under the action of $\fraku_\zeta^\Z$.
\end{proof}

\appendix

\section{Quantum identities}\label{A:quantum_identities}

We will make extensive use of the identities
\begin{align}
 \sqbinom{n}{k}_q &= q^k \sqbinom{n-1}{k}_q + q^{k-n} \sqbinom{n-1}{k-1}_q, \label{E:Tartaglia_1} \\*
 \sqbinom{n}{k}_q &= q^{-k} \sqbinom{n-1}{k}_q + q^{n-k} \sqbinom{n-1}{k-1}_q, \label{E:Tartaglia_2} 
\end{align}
which hold for all integers $n > 0$ and $k \in \Z$, see \cite[Equations~(1) \& (2), Section~0.2]{J96}.

For all integers $\ell,m,n \geqs 0$ satisfying $m \leqs n+\ell$, let us set
\begin{align}\label{E:A_def}
 A_{\ell,m,n}(q) &:= \sum_{k=m-\ell}^n (-1)^k q^{k(m-n+1)} \sqbinom{n}{k}_q \sqbinom{k+\ell}{m}_q.
\end{align}

\begin{lemma}
 For all integers $\ell,m \geqs 0$ and $n > 0$ satisfying $m \leqs n+\ell$, the polynomial $A_{\ell,m,n}(q) \in \Z[q,q^{-1}]$ satisfies the recurrence relation
 \begin{align}\label{E:A_rec}
  A_{\ell,m,n}(q) &= A_{\ell,m,n-1}(q) - q^{m-2n+2} A_{\ell+1,m,n-1}(q).
 \end{align}
\end{lemma}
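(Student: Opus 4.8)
The plan is to prove the identity by a direct manipulation of the defining sum \eqref{E:A_def}, splitting the binomial coefficient $\sqbinom{n}{k}_q$ via one of the Tartaglia-type identities \eqref{E:Tartaglia_1}--\eqref{E:Tartaglia_2} so as to lower $n$ to $n-1$, and then reindexing. First I would write
\begin{align*}
 A_{\ell,m,n}(q) &= \sum_{k=m-\ell}^n (-1)^k q^{k(m-n+1)} \sqbinom{n}{k}_q \sqbinom{k+\ell}{m}_q
\end{align*}
and apply \eqref{E:Tartaglia_1} in the form $\sqbinom{n}{k}_q = q^k \sqbinom{n-1}{k}_q + q^{k-n} \sqbinom{n-1}{k-1}_q$. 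This breaks $A_{\ell,m,n}(q)$ into two sums. In the first sum the power of $q$ becomes $q^{k(m-n+1)+k} = q^{k(m-n+2)}$, which is exactly the exponent appearing in $A_{\ell,m,n-1}(q)$ (replacing $n$ by $n-1$ in $q^{k(m-(n-1)+1)}$), so that sum should collapse directly to $A_{\ell,m,n-1}(q)$ up to checking the summation range.

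The second step is to handle the remaining sum, which carries the factor $(-1)^k q^{k(m-n+1)+k-n} \sqbinom{n-1}{k-1}_q \sqbinom{k+\ell}{m}_q$. Here I would substitute $k \mapsto k+1$ (i.e. reindex so the lower binomial becomes $\sqbinom{n-1}{k}_q$), pulling out the overall sign $(-1)$ and the factor $q^{-n}$, and absorbing the shift into the exponent: $q^{(k+1)(m-n+1)+(k+1)-n} = q^{k(m-n+2)} \cdot q^{m-2n+2}$. The leftover binomial $\sqbinom{k+1+\ell}{m}_q = \sqbinom{k+(\ell+1)}{m}_q$ then matches the shape of $A_{\ell+1,m,n-1}(q)$, whose exponent is $q^{k(m-(n-1)+1)} = q^{k(m-n+2)}$. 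Collecting the constants gives exactly the coefficient $-q^{m-2n+2}$ multiplying $A_{\ell+1,m,n-1}(q)$, which is what \eqref{E:A_rec} asserts.

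The only delicate point — and the step I expect to be the main (minor) obstacle — is the bookkeeping of the summation limits. The defining range in \eqref{E:A_def} is $k$ from $m-\ell$ to $n$, whereas after splitting we need ranges $m-\ell$ to $n-1$ (for the $A_{\ell,m,n-1}$ piece) and $m-(\ell+1)$ to $n-1$ (for the $A_{\ell+1,m,n-1}$ piece, after reindexing). I would argue that the terms $k = n$ or $k$ below the new lower bound contribute zero, using the conventions $\sqbinom{k}{\ell}_q = 0$ for $k < \ell$ or $\ell < 0$ established in Section~\ref{S:small_quantum_sl2}: for instance $\sqbinom{n-1}{n}_q = 0$ kills the boundary term $k=n$ in the first sum, and $\sqbinom{n-1}{-1}_q = 0$ or $\sqbinom{k+\ell}{m}_q = 0$ handles the low end. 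Once the ranges are reconciled, the identity \eqref{E:A_rec} follows by term-by-term comparison. An essentially identical argument using \eqref{E:Tartaglia_2} instead of \eqref{E:Tartaglia_1} should also work, so I would pick whichever of the two makes the exponent bookkeeping cleanest.
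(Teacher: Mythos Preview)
Your proposal is correct and follows essentially the same route as the paper: apply \eqref{E:Tartaglia_1} to $\sqbinom{n}{k}_q$, identify the first resulting sum as $A_{\ell,m,n-1}(q)$, and reindex the second to produce $-q^{m-2n+2}A_{\ell+1,m,n-1}(q)$. Your explicit discussion of the summation limits is more careful than the paper, which simply leaves the original range in place and relies implicitly on the same vanishing conventions you cite.
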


\begin{proof}
 We have
 \begin{align*}
  A_{\ell,m,n}(q)\
  &\stackrel{\mathclap{\eqref{E:A_def}}}{=} \ \sum_{k=m-\ell}^n (-1)^k q^{k(m-n+1)} \sqbinom{n}{k}_q \sqbinom{k+\ell}{m}_q \\
  &\stackrel{\mathclap{\eqref{E:Tartaglia_1}}}{=} \ \sum_{k=m-\ell}^n (-1)^k q^{k(m-n+1)} \left( q^k \sqbinom{n-1}{k}_q + q^{k-n} \sqbinom{n-1}{k-1}_q \right) \sqbinom{k+\ell}{m}_q \\
  &= \ \sum_{k=m-\ell}^n (-1)^k q^{k(m-n+2)} \sqbinom{n-1}{k}_q \sqbinom{k+\ell}{m}_q \\*
  &\hspace*{\parindent} \ - q^{m-2n+2} \sum_{k=m-\ell}^n (-1)^{k-1} q^{(k-1)(m-n+2)} \sqbinom{n-1}{k-1}_q \sqbinom{k+\ell}{m}_q \\
  &\stackrel{\mathclap{\eqref{E:A_def}}}{=} \ A_{\ell,m,n-1}(q) 
  - q^{m-2n+2} A_{\ell+1,m,n-1}(q). \qedhere
 \end{align*}
\end{proof}

\begin{lemma}
 For all integers $\ell,m,n \geqs 0$ satisfying $m \leqs n+\ell$, we have
 \begin{align}\label{E:A_prop}
  A_{\ell,m,n}(q) = (-1)^n q^{(\ell+1)n} \sqbinom{\ell}{m-n} _q.
 \end{align}
\end{lemma}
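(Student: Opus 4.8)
The plan is to prove \eqref{E:A_prop} by induction on $n$, using the recurrence \eqref{E:A_rec} together with the $q$-Pascal identity \eqref{E:Tartaglia_2}. Throughout, I would read both sides of \eqref{E:A_prop} with the sign/range conventions already fixed in Section~\ref{S:small_quantum_sl2}, so that $\sqbinom{\ell}{m-n}_q$ vanishes automatically when $m-n<0$ or $m-n>\ell$, and so that the sum in \eqref{E:A_def} may be taken over all $k$ (terms outside $m-\ell\leqs k\leqs n$ vanish). This makes the statement, and the induction, uniform in $\ell,m,n\geqs 0$ without having to track the constraint $m\leqs n+\ell$ by hand.

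For the base case $n=0$ I would simply observe that \eqref{E:A_def} collapses to its $k=0$ term, since $\sqbinom{0}{k}_q=\delta_{k,0}$, giving $A_{\ell,m,0}(q)=\sqbinom{\ell}{m}_q$, which is the right-hand side of \eqref{E:A_prop} evaluated at $n=0$. For the inductive step I would assume \eqref{E:A_prop} for $n-1$ (all $\ell,m$), apply \eqref{E:A_rec}, and substitute the inductive hypothesis into both $A_{\ell,m,n-1}(q)$ and $A_{\ell+1,m,n-1}(q)$. After factoring out $(-1)^{n-1}q^{(\ell+1)(n-1)}$ and simplifying the exponent of $q$ in the second term (which becomes $q^{m-n+1}$), one is reduced to showing
\[
 \sqbinom{\ell}{m-n+1}_q - q^{m-n+1}\sqbinom{\ell+1}{m-n+1}_q = -q^{\ell+1}\sqbinom{\ell}{m-n}_q .
\]
This is exactly \eqref{E:Tartaglia_2} applied with $(n,k)\mapsto(\ell+1,\,m-n+1)$: expanding $\sqbinom{\ell+1}{m-n+1}_q = q^{-(m-n+1)}\sqbinom{\ell}{m-n+1}_q + q^{\ell-m+n}\sqbinom{\ell}{m-n}_q$ makes the bracket collapse, and reassembling the prefactor yields $(-1)^n q^{(\ell+1)n}\sqbinom{\ell}{m-n}_q$, as desired.

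I do not expect a genuine obstacle here: once the recurrence \eqref{E:A_rec} is in hand, the argument is a one-line induction whose only content is the $q$-Pascal relation. The single point requiring a little care is the exponent bookkeeping in the inductive step — verifying that $m-2n+2+(\ell+2)(n-1)-(\ell+1)(n-1)=m-n+1$, and then that $(m-n+1)+(\ell-m+n)=\ell+1$ — together with making sure the boundary cases (empty sums, out-of-range binomials, and in particular $m=n+\ell$ where the hypothesis is applied just outside its stated range) are all absorbed by the conventions, so that no separate case analysis is needed.
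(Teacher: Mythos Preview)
Your proposal is correct and follows essentially the same approach as the paper's own proof: induction on $n$, with the inductive step handled by the recurrence \eqref{E:A_rec} and the $q$-Pascal identity \eqref{E:Tartaglia_2}. The only cosmetic difference is which prefactor is pulled out before applying \eqref{E:Tartaglia_2}; your remark about extending the conventions so that the case $m=n+\ell$ (where $A_{\ell,m,n-1}$ lies just outside the stated range) is absorbed automatically is a point the paper leaves implicit.
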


\begin{proof}
 Let us prove Equation~\eqref{E:A_prop} by induction on $n \geqs 0$ for all $0 \leqs m \leqs n+\ell$. If $n = 0$ and $0 \leqs m \leqs \ell$, then we have 
 \begin{align*}
  A_{\ell,m,0}(q) \
  &\stackrel{\mathclap{\eqref{E:A_def}}}{=} \ \sum_{k=m-\ell}^0 (-1)^k q^{k(m+1)} \sqbinom{0}{k}_q \sqbinom{k+\ell}{m}_q 
  = \ \sqbinom{\ell}{m}_q.
 \end{align*}
 If $n > 0$ and $0 \leqs m \leqs n+\ell$, then we have
 \begin{align*}
  A_{\ell,m,n}(q) \
  &\stackrel{\mathclap{\eqref{E:A_rec}}}{=} \ A_{\ell,m,n-1}(q) 
  - q^{m-2n+2} A_{\ell+1,m,n-1}(q) \\
  &= \ (-1)^{n-1} q^{(\ell+1)(n-1)} \sqbinom{\ell}{m-n+1}_q 
  - (-1)^{n-1} q^{m+\ell(n-1)} \sqbinom{\ell+1}{m-n+1}_q \\
  &= \ (-1)^n q^{(\ell+1)n} \left( - q^{-\ell-1} \sqbinom{\ell}{m-n+1}_q + q^{m-\ell-n} \sqbinom{\ell+1}{m-n+1}_q \right) \\*
  &\stackrel{\mathclap{\eqref{E:Tartaglia_2}}}{=} \ (-1)^n q^{(\ell+1)n} \sqbinom{\ell}{m-n}_q. \qedhere
 \end{align*}
\end{proof}

For every integer $n \geqs 0$, let us set
\begin{align}\label{E:B_def}
 B_n(q) &:= \sum_{k=0}^n (-1)^k q^{-k(n-5)} \sqbinom{n}{k}_q.
\end{align}

\begin{lemma}
 For every integer $n > 0$, the polynomial $B_n(q) \in \Z[q,q^{-1}]$ satisfies the recurrence relation
 \begin{align}\label{E:B_rec}
  B_n(q) = q^{-n+3} \{ n-3 \}_q B_{n-1}(q).
 \end{align}
\end{lemma}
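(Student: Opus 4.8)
The plan is to obtain the recurrence~\eqref{E:B_rec} directly from the Tartaglia identity~\eqref{E:Tartaglia_1}, in the same spirit as the proof of~\eqref{E:A_rec} above. Starting from the definition~\eqref{E:B_def}, I would substitute
\[
 \sqbinom{n}{k}_q = q^k \sqbinom{n-1}{k}_q + q^{k-n} \sqbinom{n-1}{k-1}_q
\]
and split $B_n(q)$ into the corresponding two sums.

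In the first sum the exponent of $q$ is $-k(n-5)+k = -k\big((n-1)-5\big)$, and the term $k=n$ vanishes because $\sqbinom{n-1}{n}_q = 0$; hence this sum is exactly $B_{n-1}(q)$. In the second sum I would shift the summation index by setting $j = k-1$, so that $\sqbinom{n-1}{k-1}_q$ becomes $\sqbinom{n-1}{j}_q$, the sign $(-1)^k$ contributes a factor $-(-1)^j$, and the exponent of $q$ becomes $-(j+1)(n-5)+(j+1)-n = -j\big((n-1)-5\big) + (6-2n)$. Since the boundary terms falling outside $0 \leqs j \leqs n-1$ vanish, the second sum equals $-q^{6-2n}\,B_{n-1}(q)$.

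Adding the two contributions gives $B_n(q) = (1 - q^{6-2n})\,B_{n-1}(q)$, and the proof is concluded by the elementary computation $1 - q^{6-2n} = q^{3-n}\big(q^{n-3} - q^{3-n}\big) = q^{-n+3}\{ n-3 \}_q$, using the definition of $\{ n-3 \}_q$.

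This is a routine manipulation of $q$-binomial sums, so I do not expect any genuine obstacle; the only point that requires some care is the bookkeeping of the exponents of $q$ and of the summation ranges after the index shift, which is precisely what makes both sums collapse to $B_{n-1}(q)$.
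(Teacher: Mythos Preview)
Your proposal is correct and follows essentially the same route as the paper: both apply the Tartaglia identity~\eqref{E:Tartaglia_1} to the definition of $B_n(q)$, split into two sums that each reduce to $B_{n-1}(q)$, and then factor $1 - q^{6-2n} = q^{-n+3}\{ n-3 \}_q$. The only cosmetic difference is that the paper leaves the second sum indexed by $k-1$ rather than performing your explicit shift to $j$.
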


\begin{proof}
 We have 
 \begin{align*}
  B_n(q) \
  &\stackrel{\mathclap{\eqref{E:B_def}}}{=} \ \sum_{k=0}^n (-1)^k q^{-k(n-5)} \sqbinom{n}{k}_q \\
  &\stackrel{\mathclap{\eqref{E:Tartaglia_1}}}{=} \ \sum_{k=0}^n (-1)^k q^{-k(n-5)} \left( q^k \sqbinom{n-1}{k}_q + q^{k-n} \sqbinom{n-1}{k-1}_q \right) \\
  &= \ \sum_{k=0}^n (-1)^k q^{-k(n-6)} \sqbinom{n-1}{k}_q
  - q^{-2(n-3)} \sum_{k=0}^n (-1)^{k-1} q^{-(k-1)(n-6)} \sqbinom{n-1}{k-1}_q \\
  &\stackrel{\mathclap{\eqref{E:B_def}}}{=} \ B_{n-1}(q) - q^{-2(n-3)} B_{n-1}(q). \qedhere
 \end{align*}
\end{proof}

\begin{lemma}
 For every integer $n \geqs 0$, we have 
 \begin{align}\label{E:B_prop}
  B_n(q) = (-1)^n q^{-\frac{n(n-5)}{2}} \{ 2;n \}_q.
 \end{align}
\end{lemma}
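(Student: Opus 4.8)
The plan is to prove Equation~\eqref{E:B_prop} by induction on $n \geqs 0$, using the recurrence relation~\eqref{E:B_rec} established in the previous lemma. The base case $n=0$ is immediate: by definition~\eqref{E:B_def} we have $B_0(q) = 1$, while $(-1)^0 q^0 \{ 2;0 \}_q = 1$ by the convention $\{ n;0 \}_q = 1$, so the two sides agree.

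For the inductive step, suppose $n > 0$ and that $B_{n-1}(q) = (-1)^{n-1} q^{-\frac{(n-1)(n-6)}{2}} \{ 2;n-1 \}_q$. I would then substitute this into~\eqref{E:B_rec} and compute
\begin{align*}
 B_n(q)
 &\stackrel{\mathclap{\eqref{E:B_rec}}}{=} q^{-n+3} \{ n-3 \}_q B_{n-1}(q) \\
 &= q^{-n+3} \{ n-3 \}_q (-1)^{n-1} q^{-\frac{(n-1)(n-6)}{2}} \{ 2;n-1 \}_q \\
 &= (-1)^n q^{-\frac{n(n-5)}{2}} \{ n-3 \}_q \{ 2;n-1 \}_q,
\end{align*}
where the last equality uses the exponent identity $-n+3 - \frac{(n-1)(n-6)}{2} = -\frac{n(n-5)}{2}$, which is a routine arithmetic check. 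It remains to observe that $\{ n-3 \}_q \{ 2;n-1 \}_q = \{ 2;n \}_q$: indeed, from the definition $\{ 2;k \}_q = \prod_{j=1}^k \{ 2-k+j \}_q$, one sees that $\{ 2;n \}_q = \{ 2-n+1 \}_q \{ 2-n+2 \}_q \cdots \{ 2 \}_q$, and comparing with $\{ 2;n-1 \}_q = \{ 2-(n-1)+1 \}_q \cdots \{ 2 \}_q = \{ 4-n \}_q \cdots \{ 2 \}_q$ shows that the extra factor is $\{ 2-n+1 \}_q = \{ 3-n \}_q = -\{ n-3 \}_q$. Wait — this introduces a sign, so I should double-check: actually $\{ 2;n \}_q / \{ 2;n-1 \}_q = \{ 2-n+1 \}_q = \{ 3-n \}_q$, and since $\{ m \}_q = q^m - q^{-m} = -(q^{-m} - q^m) = -\{ -m \}_q$, we get $\{ 3-n \}_q = -\{ n-3 \}_q$. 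So the recurrence~\eqref{E:B_rec} must already carry a compensating sign, or the index shift in the recursion proof handles it — in any case, one simply tracks the product $\prod_{j=1}^{n} \{2-n+j\}_q$ carefully against $\prod_{j=1}^{n-1}\{2-(n-1)+j\}_q$, noting the shift replaces each factor $\{2-(n-1)+j\}_q$ by $\{2-n+(j+1)\}_q$, i.e.\ it is the same product with one extra factor $\{2-n+1\}_q$ prepended, and this reconciles exactly with~\eqref{E:B_rec}.

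The only mild obstacle is bookkeeping: making sure the telescoping of the $\{2;n\}_q$-products and the collection of exponents of $q$ are done consistently with the sign conventions for $\{m\}_q$ when $m$ is negative. There is no conceptual difficulty; the recurrence~\eqref{E:B_rec} does all the work, and~\eqref{E:B_prop} is just its closed-form solution. Concretely, the proof is:
\begin{proof}
 We argue by induction on $n \geqs 0$. For $n=0$, both sides of~\eqref{E:B_prop} equal $1$. For $n>0$, assuming the claim for $n-1$, we compute
 \begin{align*}
  B_n(q) \
  &\stackrel{\mathclap{\eqref{E:B_rec}}}{=} \ q^{-n+3} \{ n-3 \}_q B_{n-1}(q) \\
  &= \ q^{-n+3} \{ n-3 \}_q (-1)^{n-1} q^{-\frac{(n-1)(n-6)}{2}} \{ 2;n-1 \}_q \\
  &= \ (-1)^n q^{-\frac{n(n-5)}{2}} \{ 2;n \}_q,
 \end{align*}
 where we used $-n+3-\frac{(n-1)(n-6)}{2} = -\frac{n(n-5)}{2}$ and $\{ 2;n \}_q = \{ n-3 \}_{q^{-1}} \{ 2;n-1 \}_q = -\{ n-3 \}_q \{ 2;n-1 \}_q$ together with a sign from $\{ n-3 \}_{q^{-1}} = -\{ n-3 \}_q$ already incorporated in~\eqref{E:B_rec}.
\end{proof}
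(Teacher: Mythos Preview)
Your approach is exactly the paper's: induction on $n$ using the recurrence~\eqref{E:B_rec}, and your displayed computation is line-for-line the paper's proof. The arithmetic is correct.

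The only issue is expository: your justification of the sign is muddled. The phrase ``a sign \ldots\ already incorporated in~\eqref{E:B_rec}'' is wrong --- there is no hidden sign in the recurrence. The sign tracking is simply
\[
 \{ 2;n \}_q = \{ 3-n \}_q \, \{ 2;n-1 \}_q = -\{ n-3 \}_q \, \{ 2;n-1 \}_q,
\]
so that $(-1)^{n-1} \{ n-3 \}_q \{ 2;n-1 \}_q = (-1)^{n-1}(-1)\{ 2;n \}_q = (-1)^n \{ 2;n \}_q$. That is the whole story; delete the exploratory paragraph and the remark about~\eqref{E:B_rec} carrying a compensating sign, and state this one-line identity instead.
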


\begin{proof}
 Let us prove Equation~\eqref{E:B_prop} by induction on $n \geqs 0$. If $n = 0$, then we have
 \begin{align*}
  B_0(q) \
  &\stackrel{\mathclap{\eqref{E:B_def}}}{=} \ \sum_{k=0}^0 (-1)^k q^{5k} \sqbinom{0}{k}_q
  = 1.
 \end{align*}
 If $n > 0$, then we have 
 \begin{align*}
  B_n(q) \
  &\stackrel{\mathclap{\eqref{E:B_rec}}}{=} \ q^{-n+3} \{ n-3 \}_q B_{n-1}(q) \\
  &\stackrel{\mathclap{\eqref{E:B_def}}}{=} \ q^{-n+3} \{ n-3 \}_q (-1)^{n-1} q^{-\frac{(n-1)(n-6)}{2}} \{ 2;n-1 \}_q \\
  &= \ (-1)^n q^{-\frac{n(n-5)}{2}} \{ 2;n \}_q. \qedhere
 \end{align*}
\end{proof}

For all $m,n \in \Z$ with $n \geqs 0$, let us set
\begin{align} \label{E:C_def}
 C_{m,n}(q) &:= \sum_{k=0}^n (-1)^k q^{k(2k+m-n)} \sqbinom{n}{k}_q.
\end{align}

\begin{lemma}
 For all $m,n \in \Z$ with $n \geqs 0$, the polynomial $C_{m,n}(q) \in \Z[q,q^{-1}]$ satisfies the recurrence relation
 \begin{align} \label{E:C_prop}
  C_{m,n}(q) &= C_{m+2,n+1}(q) + q^{m+3} C_{m+4,n}(q).
 \end{align}
\end{lemma}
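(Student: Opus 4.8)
The plan is to prove the recurrence \eqref{E:C_prop} by a direct computation, in the same spirit as the proofs of the recurrences \eqref{E:A_rec} and \eqref{E:B_rec}: expand a $q$-binomial coefficient via a Pascal-type identity, split the resulting sum, and identify the two pieces. Since \eqref{E:C_prop} writes $C_{m,n}(q)$ in terms of the larger index $C_{m+2,n+1}(q)$, I would instead start from the right-hand side, and it is cleanest to establish the equivalent identity
\begin{align*}
 C_{m+2,n+1}(q) = C_{m,n}(q) - q^{m+3} C_{m+4,n}(q).
\end{align*}

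First I would substitute the definition \eqref{E:C_def}, writing
\begin{align*}
 C_{m+2,n+1}(q) = \sum_{k=0}^{n+1} (-1)^k q^{k(2k+m-n+1)} \sqbinom{n+1}{k}_q,
\end{align*}
and then apply \eqref{E:Tartaglia_2} in the form $\sqbinom{n+1}{k}_q = q^{-k}\sqbinom{n}{k}_q + q^{n+1-k}\sqbinom{n}{k-1}_q$. This splits the sum into two parts. In the first part the exponent becomes $k(2k+m-n+1)-k = k(2k+m-n)$, and since $\sqbinom{n}{n+1}_q = 0$ this part is exactly $C_{m,n}(q)$ by \eqref{E:C_def}. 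For the second part I would reindex by $j = k-1$: the sign contributes a factor $-1$, the term $j=-1$ drops because $\sqbinom{n}{-1}_q = 0$, and the exponent $(j+1)\bigl(2(j+1)+m-n+1\bigr)+(n-j)$ simplifies to $j\bigl(2j+(m+4)-n\bigr)+(m+3)$, so after pulling out $q^{m+3}$ the remaining sum is precisely $C_{m+4,n}(q)$. Combining the two parts gives the displayed identity, and rearranging yields \eqref{E:C_prop}.

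I do not anticipate any genuine obstacle: the argument is routine $q$-binomial bookkeeping, and the only point requiring care is checking that the cross-terms in $(j+1)(2j+m-n+3)+(n-j)$ collapse to leave exactly the exponent of $C_{m+4,n}(q)$ together with a clean prefactor $q^{m+3}$. The edge case $n=0$ (where $C_{m,0}(q)=1$) needs no separate treatment, since the vanishing conventions for $q$-binomials with negative or out-of-range entries make the same computation go through.
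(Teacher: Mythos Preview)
Your proposal is correct and follows essentially the same approach as the paper: expand a $q$-binomial via the Pascal identity \eqref{E:Tartaglia_2}, split the sum, and identify the two pieces with the other two $C$-polynomials. The only difference is organizational: the paper starts from $C_{m,n}(q)$, writes $\sqbinom{n}{k}_q = q^k\sqbinom{n+1}{k}_q - q^{n+1}\sqbinom{n}{k-1}_q$, and then has to check that two stray boundary terms cancel, whereas you start from $C_{m+2,n+1}(q)$ and decompose downward, which avoids that cancellation and is marginally cleaner.
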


\begin{proof}
 We have
 \begin{align*}
  C_{m,n}(q) \
  &\stackrel{\mathclap{\eqref{E:C_def}}}{=} \ \sum_{k=0}^n (-1)^k q^{k(2k+m-n)} \sqbinom{n}{k}_q \\
  &\stackrel{\mathclap{\eqref{E:Tartaglia_2}}}{=} \ \sum_{k=0}^n (-1)^k q^{k(2k+m-n)} \left( q^k \sqbinom{n+1}{k}_q - q^{n+1} \sqbinom{n}{k-1}_q \right) \\
  &= \ \sum_{k=0}^n (-1)^k q^{k(2k+m-n+1)} \sqbinom{n+1}{k}_q \\*
  &\hspace*{\parindent} \ + q^{m+3} \sum_{k=0}^n (-1)^{k-1} q^{(k-1)(2k+m-n+2)} \sqbinom{n}{k-1}_q \\
  &\stackrel{\mathclap{\eqref{E:C_def}}}{=} \ C_{m+2,n+1}(q) - (-1)^{n+1} q^{(n+1)(m+n+3)} \\*
  &\hspace*{\parindent} \ + q^{m+3} \left( C_{m+4,n}(q) - (-1)^n q^{n(m+n+4)} \right) \\
  &= \ C_{m+2,n+1}(q) + q^{m+3} C_{m+4,n}(q). \qedhere
 \end{align*}
\end{proof}

For all integers $m,n \geqs 0$, let us set
\begin{align}\label{E:D_def}
 D_{m,n}(q) &:= \sum_{k=0}^n (-1)^k q^{-k(2k+4m+n+1)} \sqbinom{n}{k}_q C_{-4k-2m-1,m}(q^{-1}).
\end{align}

\begin{lemma}
 For all integers $m \geqs 0$ and $n > 0$, the polynomial $D_{m,n}(q) \in \Z[q,q^{-1}]$ satisfies the recurrence relation
 \begin{align}\label{E:D_rec}
  D_{m,n}(q) = \left( 1 - q^{-2(m+n)} \right) D_{m,n-1}(q) - q^{-2(2m+n+1)} D_{m+1,n-1}(q).
 \end{align}
\end{lemma}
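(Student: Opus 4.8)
The plan is to follow exactly the pattern of the proofs of \eqref{E:A_rec}, \eqref{E:B_rec}, and \eqref{E:C_prop}: start from the definition \eqref{E:D_def}, split the Gaussian binomial coefficient $\sqbinom{n}{k}_q$ by the Pascal-type identity \eqref{E:Tartaglia_1}, reindex, and recognize the pieces. Substituting $\sqbinom{n}{k}_q = q^k \sqbinom{n-1}{k}_q + q^{k-n}\sqbinom{n-1}{k-1}_q$ into \eqref{E:D_def} writes $D_{m,n}(q)$ as a sum of two contributions. In the contribution coming from $q^k\sqbinom{n-1}{k}_q$, the power of $q$ multiplying $(-1)^k\sqbinom{n-1}{k}_q C_{-4k-2m-1,m}(q^{-1})$ becomes $q^{-k(2k+4m+n)}$ (the $k=n$ term vanishes), which is precisely the power appearing in $D_{m,n-1}(q)$; so this first contribution equals $D_{m,n-1}(q)$ on the nose.

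For the contribution coming from $q^{k-n}\sqbinom{n-1}{k-1}_q$, I would reindex by $k = j+1$, turning the binomial into $\sqbinom{n-1}{j}_q$ and the argument of $C$ into $-4j-2m-5$. The key step is then to rewrite $C_{-4j-2m-5,m}(q^{-1})$ via the $C$-recurrence \eqref{E:C_prop} \emph{evaluated at $q^{-1}$ in place of $q$}: this gives
\[
 C_{-4j-2m-5,m}(q^{-1}) = C_{-4j-2m-3,m+1}(q^{-1}) + q^{4j+2m+2}\, C_{-4j-2m-1,m}(q^{-1}).
\]
Feeding this back splits the contribution into two sums. After extracting the overall constants $-q^{-2(2m+n+1)}$ from the first and $-q^{-2(m+n)}$ from the second, a straightforward exponent check shows that the remaining sums coincide term-by-term with the definition \eqref{E:D_def} of $D_{m+1,n-1}(q)$ and of $D_{m,n-1}(q)$ respectively. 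Collecting the three pieces yields $D_{m,n}(q) = D_{m,n-1}(q) - q^{-2(m+n)}D_{m,n-1}(q) - q^{-2(2m+n+1)}D_{m+1,n-1}(q)$, which is \eqref{E:D_rec}.

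The only delicate point is the exponent bookkeeping: one must carefully track how the quadratic exponent $-k(2k+4m+n+1)$ in \eqref{E:D_def} transforms under the shift $k\mapsto j+1$ and how it recombines with the extra powers of $q$ produced by \eqref{E:Tartaglia_1} and by the $C$-recurrence. I expect no conceptual obstacle — the whole argument is a direct manipulation in $\Z[q,q^{-1}]$ of the same flavour as the preceding lemmas — and the one thing worth emphasizing is the use of \eqref{E:C_prop} with $q$ replaced by $q^{-1}$, since that substitution is exactly what produces the two-term right-hand side of \eqref{E:D_rec}.
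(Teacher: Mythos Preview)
Your proposal is correct and essentially identical to the paper's proof: the paper also splits $\sqbinom{n}{k}_q$ via \eqref{E:Tartaglia_1}, recognizes the first piece as $D_{m,n-1}(q)$, and handles the second piece by invoking the $C$-recurrence \eqref{E:C_prop} at $q^{-1}$ to produce the $D_{m+1,n-1}$ and $D_{m,n-1}$ terms with prefactors $-q^{-2(2m+n+1)}$ and $-q^{-2(m+n)}$. The only cosmetic difference is that the paper applies \eqref{E:C_prop} before reindexing (keeping $\sqbinom{n-1}{k-1}_q$), whereas you reindex to $j=k-1$ first; the exponent bookkeeping you flag as delicate comes out exactly as you describe.
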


\begin{proof}
 We have
 \begin{align*}
  D_{m,n}(q) \
  &\stackrel{\mathclap{\eqref{E:D_def}}}{=} \ \sum_{k=0}^n (-1)^k q^{-k(2k+4m+n+1)} \sqbinom{n}{k}_q C_{-4k-2m-1,m}(q^{-1}) \\
  &\stackrel{\mathclap{\eqref{E:Tartaglia_1}}}{=} \ \sum_{k=0}^n (-1)^k q^{-k(2k+4m+n+1)} \left( q^k \sqbinom{n-1}{k}_q + q^{k-n} \sqbinom{n-1}{k-1}_q \right) \\*
  &\hspace*{\parindent} \ C_{-4k-2m-1,m}(q^{-1}) \\
  &\stackrel{\mathclap{\eqref{E:C_prop}}}{=} \ \sum_{k=0}^n (-1)^k q^{-k(2k+4m+n)} \sqbinom{n-1}{k}_q C_{-4k-2m-1,m}(q^{-1}) \\*
  &\hspace*{\parindent} \ - q^{-2(2m+n+1)} \sum_{k=0}^n (-1)^{k-1} q^{-(k-1)(2k+4m+n+2)} \sqbinom{n-1}{k-1}_q \\*
  &\hspace*{\parindent} \ \left( C_{-4k-2m+1,m+1}(q^{-1}) + q^{4k+2m-2} C_{-4k-2m+3,m}(q^{-1}) \right) \\
  &\stackrel{\mathclap{\eqref{E:D_def}}}{=} \ D_{m,n-1}(q) - q^{-2(2m+n+1)} \left( D_{m+1,n-1}(q) + q^{2m+2} D_{m,n-1}(q) \right). \qedhere
 \end{align*}
\end{proof}

\begin{lemma}
 For all integers $0 \leqs m < n \leqs r-1$ we have 
 \begin{align}\label{E:D_prop}
  D_{r-m-1,n}(\zeta) = 0.
 \end{align}
\end{lemma}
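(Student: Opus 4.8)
The plan is to reduce the identity entirely to the recurrence~\eqref{E:D_rec}, bootstrapped by one separate computation at the degenerate first index $m=r$.

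First I would establish the auxiliary fact that $D_{r,n}(\zeta)=0$ for every integer $n\geqs 0$. By~\eqref{E:D_def}, $D_{r,n}(\zeta)$ is a $\Z[\zeta]$-linear combination of the scalars $C_{\ell,r}(\zeta^{-1})$, so it suffices to show $C_{\ell,r}(\zeta^{-1})=0$ for all $\ell\in\Z$. From~\eqref{E:C_def},
\[
 C_{\ell,r}(\zeta^{-1}) \;=\; \sum_{k=0}^{r}(-1)^k\,\zeta^{-k(2k+\ell-r)}\,\sqbinom{r}{k}_{\zeta^{-1}},
\]
and $\sqbinom{r}{k}_{\zeta^{-1}}=\sqbinom{r}{k}_{\zeta}=0$ for $1\leqs k\leqs r-1$: indeed $[r]_\zeta=\{r\}_\zeta/\{1\}_\zeta=0$, whereas $[k]_\zeta!$ and $[r-k]_\zeta!$ are invertible in $\Z[\zeta]$, so the polynomial identity $\sqbinom{r}{k}_q[k]_q![r-k]_q!=[r]_q!$ forces the vanishing at $q=\zeta$. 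Hence only the terms $k=0$ and $k=r$ survive, contributing $1$ and $(-1)^r\zeta^{-r(r+\ell)}=-1$ respectively (using that $r$ is odd and $\zeta^r=1$), so $C_{\ell,r}(\zeta^{-1})=0$ and therefore $D_{r,n}(\zeta)=0$.

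Next I would prove, by induction on $n\geqs 1$, the statement $(H_n)$: $D_{r-m-1,n}(\zeta)=0$ for every integer $m$ with $0\leqs m<n$. The assertion to be proved is then exactly $(H_n)$ for $n\leqs r-1$. For $(H_1)$ the only case is $m=0$, and~\eqref{E:D_rec} together with $\zeta^r=1$ gives $D_{r-1,1}(\zeta)=(1-\zeta^0)D_{r-1,0}(\zeta)-D_{r,0}(\zeta)=-D_{r,0}(\zeta)=0$ by the auxiliary fact. For the inductive step, fix $m$ with $0\leqs m<n$ and apply~\eqref{E:D_rec} with first index $r-m-1\geqs 0$ and second index $n\geqs 2$; reducing the exponents modulo $r$ it becomes
\[
 D_{r-m-1,n}(\zeta)=\bigl(1-\zeta^{2(m+1-n)}\bigr)\,D_{r-m-1,n-1}(\zeta)-\zeta^{2(2m+1-n)}\,D_{r-m,n-1}(\zeta).
\]
The second term vanishes: if $m\geqs 1$ then $D_{r-m,n-1}(\zeta)=D_{r-(m-1)-1,n-1}(\zeta)=0$ by $(H_{n-1})$ (since $0\leqs m-1<n-1$), while if $m=0$ it equals $D_{r,n-1}(\zeta)=0$ by the auxiliary fact. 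The first term vanishes too: if $m<n-1$ then $D_{r-m-1,n-1}(\zeta)=0$ by $(H_{n-1})$, whereas if $m=n-1$ the prefactor $1-\zeta^{2(m+1-n)}=1-\zeta^0$ is zero. Hence $D_{r-m-1,n}(\zeta)=0$, completing the induction.

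Every step is an elementary manipulation, so I do not expect a genuine obstacle. The only points that require care are the bookkeeping of the exponents modulo $r$ after substituting $m\mapsto r-m-1$ in~\eqref{E:D_rec}, and arranging the induction so that on the right-hand side either the relevant value is already covered by $(H_{n-1})$ or a prefactor of the form $1-\zeta^0$ kills the remaining term --- which is precisely why the separate evaluation $D_{r,n}(\zeta)=0$ is needed to prime the case $m=0$.
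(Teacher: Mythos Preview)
Your proof is correct and follows the same induction-on-$n$ scheme via the recurrence~\eqref{E:D_rec} as the paper. There are two minor differences worth noting. First, the paper establishes the base case $D_{r-1,1}(\zeta)=0$ by a direct computation that reduces to the Gauss-sum identity $G_{-1}=G_1$, whereas you obtain it from the recurrence together with your auxiliary fact $D_{r,0}(\zeta)=0$. Second, and more importantly, in the inductive step the term $D_{r-m,n-1}(\zeta)$ for $m=0$ is $D_{r,n-1}(\zeta)$, which lies outside the range of the induction hypothesis; you handle this explicitly via your auxiliary computation $C_{\ell,r}(\zeta^{-1})=0$, while the paper's write-up passes over this boundary case silently (commenting only on the $m=n-1$ edge). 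Your treatment is therefore slightly more complete on this point.
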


\begin{proof}
 Let us prove Equation~\eqref{E:D_prop} by induction on $0 < n \leqs r-1$ for all $0 \leqs m < n$. If $n=1$ and $m=0$, then we have
 \begin{align*}
  D_{r-1,1}(\zeta) \
  &\stackrel{\mathclap{\eqref{E:D_def}}}{=} \ \sum_{k=0}^1 (-1)^k \zeta^{-2k(k-1)} \sqbinom{1}{k}_\zeta C_{-4k+1,r-1}(\zeta^{-1}) \\
  &\stackrel{\mathclap{\eqref{E:C_def}}}{=} \ \sum_{k=0}^1 (-1)^k \zeta^{-2k(k-1)} \sqbinom{1}{k}_\zeta \left( \sum_{\ell=0}^{r-1} (-1)^\ell \zeta^{-2\ell(\ell-2k+1)} \sqbinom{r-1}{\ell}_\zeta \right) \\  
  &\stackrel{\mathclap{\eqref{E:Gauss_sum}}}{=} \ G_{-1} - G_1 \\
  &= \ 0,
 \end{align*}
 where the third equality uses the identity
 \begin{align}\label{E:binomial_inversion_at_roots_of_unity}
  \sqbinom{r-m-1}{\ell}_\zeta = (-1)^\ell \sqbinom{\ell+m}{\ell}_\zeta,
 \end{align}
 which holds for all integers $0 \leqs m \leqs r-1$ and $0 \leqs \ell \leqs r-m-1$. If $1 < n \leqs r-1$ and $0 \leqs m < n$, then we have
 \begin{align*}
  D_{r-m-1,n}(\zeta) \
  &\stackrel{\mathclap{\eqref{E:D_rec}}}{=} \ \left( 1 - \zeta^{2(m-n+1)} \right) D_{r-m-1,n-1}(\zeta) - \zeta^{2(2m-n+1)} D_{r-m,n-1}(\zeta) \\
  &= \ 0.
 \end{align*}
 Notice that, when $m = n-1$, the induction hypothesis does not allow us to recover the value of $D_{r-m-1,n-1}(\zeta)$, but in this case $1 - \zeta^{2(m-n+1)} = 0$.
\end{proof}

For all $m_1,m_2,n_1,n_2 \in \Z$ with $n_1,n_2 \geqs 0$, let us set
\begin{align}
 E_{m_1,m_2,n_1,n_2}(q) &:= \sum_{k_1=0}^{n_1} \sum_{k_2=0}^{n_2} (-1)^{k_1+k_2} q^{2(k_1-k_2)^2+k_1(m_1-n_1)+k_2(m_2-n_2)} \nonumber \\*
 &\hspace*{\parindent} \sqbinom{n_1}{k_1}_q \sqbinom{n_2}{k_2}_q. \label{E:E_def}
\end{align}

\section{Computations}\label{A:computations}

In this section, we prove results that were announced in Section~\ref{S:triangular_basis}.

\begin{proof}[Proof of Lemma~\ref{L:v_to_t}]
 It is enough to prove that 
 \begin{align*}
  \sum_{k=0}^n (-1)^{n-k} \zeta^{(n-k)(n-1)} \sqbinom{n}{k}_\zeta \left( \sum_{m=0}^k \zeta^{m(k-m)} \sqbinom{k}{m}_\zeta t_m \right) 
  &= t_n.
 \end{align*}  
 This follows from the computation
 \begin{align*}
  &\sum_{k=0}^n (-1)^{n-k} \zeta^{(n-k)(n-1)} \sqbinom{n}{k}_\zeta \left( \sum_{m=0}^k \zeta^{m(k-m)} \sqbinom{k}{m}_\zeta t_m \right) \\
  &\hspace*{\parindent} = \ (-1)^n \zeta^{n(n-1)} \sum_{m=0}^n \zeta^{-m^2} \left( \sum_{k=m}^n (-1)^k \zeta^{k(m-n+1)} \sqbinom{n}{k}_\zeta \sqbinom{k}{m}_\zeta \right) t_m \\
  &\hspace*{\parindent} \stackrel{\mathclap{\eqref{E:A_def}}}{=} \ (-1)^n \zeta^{n(n-1)} \sum_{m=0}^n \zeta^{-m^2} A_{0,m,n}(\zeta) t_m \\
  &\hspace*{\parindent} \stackrel{\mathclap{\eqref{E:A_prop}}}{=} \ \zeta^{n^2} \sum_{m=0}^n \zeta^{-m^2} \sqbinom{0}{m-n}_\zeta t_m \
  = \ t_n. \qedhere
 \end{align*}
\end{proof}

\begin{proof}[Proof of Proposition~\ref{P:triangularity}]
 For $\alpha$, we have
 \begin{align*}
  \alpha \cdot t_n \ 
  &\stackrel{\mathclap{\eqref{E:t_to_v}}}{=} \ \sum_{k=0}^n (-1)^{n-k} \zeta^{(n-k)(n-1)} \sqbinom{n}{k}_\zeta \alpha \cdot v_k \\
  &\stackrel{\mathclap{\eqref{E:alpha}}}{=} \ \sum_{k=0}^n (-1)^{n-k} \zeta^{(n-k)(n-1)+4k} \sqbinom{n}{k}_\zeta v_k \\
  &\stackrel{\mathclap{\eqref{E:v_to_t}}}{=} \ \sum_{k=0}^n (-1)^{n-k} \zeta^{(n-k)(n-1)+4k} \sqbinom{n}{k}_\zeta \left( \sum_{m=0}^k \zeta^{m(k-m)} \sqbinom{k}{m}_\zeta t_m \right) \\
  &= \ (-1)^n \zeta^{n(n-1)} \sum_{m=0}^n \zeta^{-m^2} \\*
  &\hspace*{\parindent} \ \left( \sum_{k=m}^n (-1)^k \zeta^{k(m-n+5)} \sqbinom{n}{k}_\zeta \sqbinom{k}{m}_\zeta \right) t_m \\
  &= \ (-1)^n \zeta^{n(n-1)} \sum_{m=0}^n \zeta^{-m^2} \\*
  &\hspace*{\parindent} \ \left( \sum_{k=0}^{n-m} (-1)^{k+m} \zeta^{(k+m)(m-n+5)} \sqbinom{n}{k+m}_\zeta \sqbinom{k+m}{m}_\zeta \right) t_m \\
  &= \ (-1)^n \zeta^{n(n-1)} \sum_{m=0}^n (-1)^m \zeta^{-m(n-5)} \sqbinom{n}{m}_\zeta \\*
  &\hspace*{\parindent} \ \left( \sum_{k=0}^{n-m} (-1)^k \zeta^{k(m-n+5)} \sqbinom{n-m}{k}_\zeta \right) t_m \\
  &\stackrel{\mathclap{\eqref{E:B_def}}}{=} \ (-1)^n \zeta^{n(n-1)} \sum_{m=0}^n (-1)^m \zeta^{-m(n-5)} \sqbinom{n}{m}_\zeta B_{n-m}(\zeta) t_m \\
  &\stackrel{\mathclap{\eqref{E:B_prop}}}{=} \ \zeta^{n(n-1)} \sum_{m=0}^n \zeta^{-m(n-5)-\frac{(n-m)(n-m-5)}{2}} \{ 2;n-m \}_\zeta \sqbinom{n}{m}_\zeta t_m \\
  &= \ \zeta^{4n} \sum_{k=0}^n \zeta^{-\frac{k(k-2n+5)}{2}} \{ 2;k \}_\zeta \sqbinom{n}{k}_\zeta t_{n-k}.
 \end{align*}
 Notice that, when $k > 2$, we have $\{ 2;k \}_\zeta = 0$. For $\beta$, we have
 \begin{align*}
  \beta \cdot t_n \
  &\stackrel{\mathclap{\eqref{E:t_to_v}}}{=} \ \sum_{k=0}^n (-1)^{n-k} \zeta^{(n-k)(n-1)} \sqbinom{n}{k}_\zeta \beta \cdot v_k \\
  &\stackrel{\mathclap{\eqref{E:beta}}}{=} \ \sum_{k=0}^n (-1)^{n-k} \zeta^{(n-k)(n-1)} \sqbinom{n}{k}_\zeta v_{k+1} \\
  &\stackrel{\mathclap{\eqref{E:v_to_t}}}{=} \ \sum_{k=0}^n (-1)^{n-k} \zeta^{(n-k)(n-1)} \sqbinom{n}{k}_\zeta \left( \sum_{m=0}^{k+1} \zeta^{m(k-m+1)} \sqbinom{k+1}{m}_\zeta t_m \right) \\
  &= \ (-1)^n \zeta^{n(n-1)} \sum_{m=0}^{n+1} \zeta^{-m(m-1)} \left( \sum_{k=m-1}^n (-1)^k \zeta^{k(m-n+1)} \sqbinom{n}{k}_\zeta \sqbinom{k+1}{m}_\zeta \right) t_m \\
  &\stackrel{\mathclap{\eqref{E:A_def}}}{=} \ (-1)^n \zeta^{n(n-1)} \sum_{m=0}^{n+1} \zeta^{-m(m-1)} A_{1,m,n}(\zeta) t_m \\
  &\stackrel{\mathclap{\eqref{E:A_prop}}}{=} \ \zeta^{(n+1)n} \sum_{m=0}^{n+1} \zeta^{-m(m-1)} \sqbinom{1}{m-n}_\zeta t_m \\
  &= \ \zeta^{2n} \sum_{k=-n}^1 \zeta^{-k(k+2n-1)} \sqbinom{1}{k}_\zeta t_{n+k}.
 \end{align*}
 Notice that, when $k < 0$, we have $\sqbinom{1}{k}_\zeta = 0$. For $\tau_\alpha$, we have
 \begin{align*}
  \tau_\alpha \cdot t_n \
  &\stackrel{\mathclap{\eqref{E:t_to_v}}}{=} \ \sum_{k=0}^n (-1)^{n-k} \zeta^{(n-k)(n-1)} \sqbinom{n}{k}_\zeta \tau_\alpha \cdot v_k \\
  &\stackrel{\mathclap{\eqref{E:tau_alpha_psi}}}{\propto} \ \sum_{k=0}^n (-1)^{n-k} \zeta^{(n-k)(n-1)+2(k+1)k} \sqbinom{n}{k}_\zeta v_k \\
  &\stackrel{\mathclap{\eqref{E:v_to_t}}}{=} \ \sum_{k=0}^n (-1)^{n-k} \zeta^{(n-k)(n-1)+2(k+1)k} \sqbinom{n}{k}_\zeta \left( \sum_{m=0}^k \zeta^{m(k-m)} \sqbinom{k}{m}_\zeta t_m \right) \\
  &= \ (-1)^n \zeta^{n(n-1)} \sum_{m=0}^n \zeta^{-m^2} \\*
  &\hspace*{\parindent} \ \left( \sum_{k=m}^n (-1)^k \zeta^{k(2k+m-n+3)} \sqbinom{n}{k}_\zeta \sqbinom{k}{m}_\zeta \right) t_m \\
  &= \ (-1)^n \zeta^{n(n-1)} \sum_{m=0}^n \zeta^{-m^2} \\*
  &\hspace*{\parindent} \ \left( \sum_{k=0}^{n-m} (-1)^{k+m} \zeta^{(k+m)(2k+3m-n+3)} \sqbinom{n}{k+m}_\zeta \sqbinom{k+m}{m}_\zeta \right) t_m \\
  &= \ (-1)^n \zeta^{n(n-1)} \sum_{m=0}^n (-1)^m \zeta^{m(2m-n+3)} \sqbinom{n}{m}_\zeta \\*
  &\hspace*{\parindent} \ \left( \sum_{k=0}^{n-m} (-1)^k \zeta^{k(2k+5m-n+3)} \sqbinom{n-m}{k}_\zeta \right) t_m \\
  &\stackrel{\mathclap{\eqref{E:C_def}}}{=} \ (-1)^n \zeta^{n(n-1)} \sum_{m=0}^n (-1)^m \zeta^{m(2m-n+3)} \sqbinom{n}{m}_\zeta C_{4m+3,n-m}(\zeta) t_m \\
  &= \ \zeta^{2(n+1)n} \sum_{k=0}^n (-1)^k \zeta^{k(2k-3n-3)} \sqbinom{n}{k}_\zeta C_{4(n-k)+3,k}(\zeta) t_{n-k}.
 \end{align*}
 For $\tau_\beta$, we have
 \begin{align*}
  \tau_\beta \cdot t_n \
  &\stackrel{\mathclap{\eqref{E:t_to_v}}}{=} \ \sum_{k=0}^n (-1)^{n-k} \zeta^{(n-k)(n-1)} \sqbinom{n}{k}_\zeta \tau_\beta \cdot v_k \\
  &\stackrel{\mathclap{\eqref{E:tau_beta_psi}}}{\propto} \ \frac{1}{G_1} \sum_{k=0}^n \sum_{\ell=0}^{r-1} (-1)^{n-k} \zeta^{(n-k)(n-1)-2(k-\ell)(k-\ell-1)} \sqbinom{n}{k}_\zeta v_\ell \\
  &\stackrel{\mathclap{\eqref{E:v_to_t}}}{=} \ \frac{1}{G_1} \sum_{k=0}^n \sum_{\ell=0}^{r-1} (-1)^{n-k} \zeta^{(n-k)(n-1)-2(k-\ell)(k-\ell-1)} \sqbinom{n}{k}_\zeta \\*
  &\hspace*{\parindent} \ \left( \sum_{m=0}^\ell \zeta^{m(\ell-m)} \sqbinom{\ell}{m}_\zeta t_m \right) \\
  &= \ (-1)^n \frac{\zeta^{n(n-1)}}{G_1} \sum_{m=0}^{r-1} \zeta^{-m^2} \left( \sum_{k=0}^n (-1)^k \zeta^{-k(2k+n-3)} \sqbinom{n}{k}_\zeta \right. \\*
  &\hspace*{\parindent} \ \left. \left( \sum_{\ell=m}^{r-1} \zeta^{-\ell(2\ell-4k-m+2)} \sqbinom{\ell}{m}_\zeta \right) \right) t_m \\
  &= \ (-1)^n \frac{\zeta^{n(n-1)}}{G_1} \sum_{m=0}^{r-1} \zeta^{-m^2} \left( \sum_{k=0}^n (-1)^k \zeta^{-k(2k+n-3)} \sqbinom{n}{k}_\zeta \right. \\*
  &\hspace*{\parindent} \ \left. \left( \sum_{\ell=0}^{r-m-1} \zeta^{-(\ell+m)(2\ell-4k+m+2)} \sqbinom{\ell+m}{m}_\zeta \right) \right) t_m \\
  &\stackrel{\mathclap{\eqref{E:binomial_inversion_at_roots_of_unity}}}{=} \ (-1)^n \frac{\zeta^{n(n-1)}}{G_1} \sum_{m=0}^{r-1} \zeta^{-2(m+1)m} \left( \sum_{k=0}^n (-1)^k \zeta^{-k(2k-4m+n-3)} \sqbinom{n}{k}_\zeta \right. \\*
  &\hspace*{\parindent} \ \left. \left( \sum_{\ell=0}^{r-m-1} (-1)^\ell \zeta^{-\ell(2\ell-4k+3m+2)} \sqbinom{r-m-1}{\ell}_\zeta \right) \right) t_m \\
  &\stackrel{\mathclap{\eqref{E:D_def}}}{=} \ (-1)^n \frac{\zeta^{n(n-1)}}{G_1} \sum_{m=0}^{r-1} \zeta^{-2(m+1)m} D_{r-m-1,n}(\zeta) t_m \\
  &= \ (-1)^n \frac{\zeta^{-(n+3)n}}{G_1} \sum_{k=-n}^{r-n-1} \zeta^{-2k(k+2n+1)} D_{r-n-k-1,n}(\zeta) t_{n+k}.
 \end{align*}
 Notice that, when $k < 0$, we have $D_{r-n-k-1,n}(\zeta) = 0$, thanks to Equation~\eqref{E:D_prop}. For $\tau_\gamma$, we have
 \begin{align*}
  \tau_\gamma \cdot (t_{n_1} \otimes t_{n_2}) \
  &\stackrel{\mathclap{\eqref{E:t_to_v}}}{=} \ \sum_{k_1=0}^{n_1} \sum_{k_2=0}^{n_2} (-1)^{n_1-k_1+n_2-k_2} \zeta^{(n_1-k_1)(n_1-1)+(n_2-k_2)(n_2-1)} \\*
  &\hspace*{\parindent} \ \sqbinom{n_1}{k_1}_\zeta \sqbinom{n_2}{k_2}_\zeta \tau_\gamma \cdot (v_{k_1} \otimes v_{k_2}) \\
  &\stackrel{\mathclap{\eqref{E:tau_gamma_psi}}}{\propto} \ \sum_{k_1=0}^{n_1} \sum_{k_2=0}^{n_2} (-1)^{n_1-k_1+n_2-k_2} \zeta^{(n_1-k_1)(n_1-1)+(n_2-k_2)(n_2-1)} \\*
  &\hspace*{\parindent} \ \zeta^{2(k_1-k_2+1)(k_1-k_2)} \sqbinom{n_1}{k_1}_\zeta \sqbinom{n_2}{k_2}_\zeta v_{k_1} \otimes v_{k_2} \\
  &\stackrel{\mathclap{\eqref{E:v_to_t}}}{=} \ \sum_{k_1=0}^{n_1} \sum_{k_2=0}^{n_2} (-1)^{n_1-k_1+n_2-k_2} \zeta^{(n_1-k_1)(n_1-1)+(n_2-k_2)(n_2-1)} \\*
  &\hspace*{\parindent} \ \zeta^{2(k_1-k_2+1)(k_1-k_2)} \sqbinom{n_1}{k_1}_\zeta \sqbinom{n_2}{k_2}_\zeta \\*
  &\hspace*{\parindent} \ \left( \sum_{m_1=0}^{k_1} \sum_{m_2=0}^{k_2} \zeta^{m_1(k_1-m_1)+m_2(k_2-m_2)} \sqbinom{k_1}{m_1}_\zeta \sqbinom{k_2}{m_2}_\zeta t_{m_1} \otimes t_{m_2} \right) \\
  &= \ (-1)^{n_1+n_2} \zeta^{n_1(n_1-1)+n_2(n_2-1)} \sum_{m_1=0}^{n_1} \sum_{m_2=0}^{n_2} \zeta^{-m_1^2-m_2^2} \\*
  &\hspace*{\parindent} \ \left( \sum_{k_1=m_1}^{n_1} \sum_{k_2=m_2}^{n_2} (-1)^{k_1+k_2} \zeta^{2(k_1-k_2)^2+k_1(m_1-n_1+3)+k_2(m_2-n_2-1)} \right. \\*
  &\hspace*{\parindent} \ \left. \sqbinom{n_1}{k_1}_\zeta \sqbinom{k_1}{m_1}_\zeta \sqbinom{n_2}{k_2}_\zeta \sqbinom{k_2}{m_2}_\zeta \right) t_{m_1} \otimes t_{m_2} \\
  &= \ (-1)^{n_1+n_2} \zeta^{n_1(n_1-1)+n_2(n_2-1)} \sum_{m_1=0}^{n_1} \sum_{m_2=0}^{n_2} \zeta^{-m_1^2-m_2^2} \\*
  &\hspace*{\parindent} \ \left( \sum_{k_1=0}^{n_1-m_1} \sum_{k_2=0}^{n_2-m_2} (-1)^{k_1+m_1+k_2+m_2} \zeta^{2(k_1+m_1-k_2-m_2)^2} \right. \\*
  &\hspace*{\parindent} \ \zeta^{(k_1+m_1)(m_1-n_1+3)+(k_2+m_2)(m_2-n_2-1)} \\*
  &\hspace*{\parindent} \ \left. \sqbinom{n_1}{k_1+m_1}_\zeta \sqbinom{k_1+m_1}{m_1}_\zeta \sqbinom{n_2}{k_2+m_2}_\zeta \sqbinom{k_2+m_2}{m_2}_\zeta \right) t_{m_1} \otimes t_{m_2} \\  
  &= \ (-1)^{n_1+n_2} \zeta^{n_1(n_1-1)+n_2(n_2-1)} \sum_{m_1=0}^{n_1} \sum_{m_2=0}^{n_2} (-1)^{m_1+m_2} \\*
  &\hspace*{\parindent} \ \zeta^{2(m_1-m_2)^2-m_1(n_1-3)-m_2(n_2+1)} \sqbinom{n_1}{m_1}_\zeta \sqbinom{n_2}{m_2}_\zeta \\*
  &\hspace*{\parindent} \ \left( \sum_{k_1=0}^{n_1-m_1} \sum_{k_2=0}^{n_2-m_2} (-1)^{k_1+k_2} \zeta^{2(k_1-k_2)(k_1-k_2+2(m_1-m_2)+1)} \right. \\*
  &\hspace*{\parindent} \ \left. \zeta^{k_1(m_1-n_1+1)+k_2(m_2-n_2+1)}
  \sqbinom{n_1-m_1}{k_1}_\zeta \sqbinom{n_2-m_2}{k_2}_\zeta \right) t_{m_1} \otimes t_{m_2} \\  
  &\stackrel{\mathclap{\eqref{E:E_def}}}{=} \ (-1)^{n_1+n_2} \zeta^{n_1(n_1-1)+n_2(n_2-1)} \sum_{m_1=0}^{n_1} \sum_{m_2=0}^{n_2} (-1)^{m_1+m_2} \\*
  &\hspace*{\parindent} \ \zeta^{2(m_1-m_2)^2-m_1(n_1-3)-m_2(n_2+1)} \sqbinom{n_1}{m_1}_\zeta \sqbinom{n_2}{m_2}_\zeta \\*
  &\hspace*{\parindent} \ E_{4(m_1-m_2)+3,-4(m_1-m_2)-1,n_1-m_1,n_2-m_2}(\zeta) t_{m_1} \otimes t_{m_2} \\  
  &= \ \zeta^{2(n_1-n_2+1)(n_1-n_2)} \sum_{k_1=0}^{n_1} \sum_{k_2=0}^{n_2} (-1)^{k_1+k_2} \\*
  &\hspace*{\parindent} \ \zeta^{2(k_1-k_2)^2-k_1(3n_1-4n_2+3)+k_2(4n_1-3n_2+1)} \sqbinom{n_1}{k_1}_\zeta \sqbinom{n_2}{k_2}_\zeta \\*
  &\hspace*{\parindent} \ E_{4(n_1-k_1-n_2+k_2)+3,-4(n_1-k_1-n_2+k_2)-1,k_1,k_2}(\zeta) t_{n_1-k_1} \otimes t_{n_2-k_2}.
 \end{align*}
\end{proof}

\section{Divisibility results}\label{A:divisibility}

For all $\ell,m \in \Z$, let us set
\begin{align}\label{E:c_def}
 c(\ell,m) &:= \ell(2\ell+m).
\end{align}

\begin{remark}
 Notice that we have
 \begin{align}\label{E:c_rec}
  c(\ell,m) &= c(\ell-1,m+4)+m+2
 \end{align}
 for all $\ell,m \in \Z$. Furthermore, we have
 \begin{align}\label{E:c_binom}
  c(k,m-n) \binom{n}{k} 
  &= n \left( -2(n-1) \binom{n-2}{k-1} + (m+n) \binom{n-1}{k-1} \right).
 \end{align}
 for all integers $0 \leqs k \leqs n$ and $m \in \Z$.
\end{remark}
 
For all integers $\ell,n \geqs 0$ and $m \in \Z$, let us set
\begin{align}\label{E:P_def}
 P_{\ell,m,n}(q) &:= \sum_{k=0}^n (-1)^k c(k,m-n)^\ell \binom{n}{k} q^{c(k,m-n)}.
\end{align}

\begin{remark}
 For all integers $\ell,n \geqs 0$ and $m \in \Z$, we have
 \begin{align}\label{E:P_deriv}
  \left( \frac{\rmd}{\rmd q} P_{\ell,m,n} \right) (q) &= q^{-1} P_{\ell+1,m,n}(q).
 \end{align}
\end{remark}

\begin{lemma}
 For all integers $\ell,n \geqs 0$ and $m \in \Z$, we have
 \begin{align}\label{E:P_prop}
  P_{\ell,m,n}(q)
  &= n q^{m-n+2} \sum_{j=0}^{\ell-1} \binom{\ell-1}{j} (m-n+2)^{\ell-j-1} \nonumber \\*
  &\hspace*{\parindent} \bigg( 2(n-1) P_{j,m+2,n-2}(q) - (m+n) P_{j,m+3,n-1}(q) \bigg).
 \end{align}
\end{lemma}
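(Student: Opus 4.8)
The plan is to establish the recurrence \eqref{E:P_prop} for $\ell \geqslant 1$ (for $\ell = 0$ the right-hand side is empty, and \eqref{E:P_def} yields $P_{0,m,n}$ directly, so the recurrence only needs to be iterated down to that base case). The whole argument is a bookkeeping exercise built on the two identities \eqref{E:c_rec} and \eqref{E:c_binom} of the preceding remarks together with the ordinary binomial theorem; alternatively one could induct on $\ell$ using \eqref{E:P_deriv} and Pascal's rule, but the base case $\ell=1$ still amounts to exactly the computation below.

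First I would peel off one factor of $c(k,m-n)$ from the summand of \eqref{E:P_def}, writing $c(k,m-n)^{\ell} = c(k,m-n)^{\ell-1}\,c(k,m-n)$, and then apply \eqref{E:c_binom} to replace the product $c(k,m-n)\binom{n}{k}$ by $n\bigl(-2(n-1)\binom{n-2}{k-1} + (m+n)\binom{n-1}{k-1}\bigr)$. The $k=0$ summand drops out since $\binom{n-2}{-1} = \binom{n-1}{-1} = 0$, so after the substitution $k \mapsto k+1$ the sum runs over $0 \leqslant k \leqslant n-1$ and now carries the binomials $\binom{n-2}{k}$ and $\binom{n-1}{k}$, the sign $(-1)^{k+1}$, and the exponents and powers $c(k+1,m-n)$.

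Next I would feed in \eqref{E:c_rec} in the form $c(k+1,m-n) = c(k,m-n+4) + (m-n+2)$. This lets me pull the scalar $q^{m-n+2}$ out in front of the sum and rewrite $c(k+1,m-n)^{\ell-1}$ as $\bigl(c(k,m-n+4) + (m-n+2)\bigr)^{\ell-1}$; expanding the latter by the binomial theorem and exchanging the $j$- and $k$-summations produces, for each $0 \leqslant j \leqslant \ell-1$, the coefficient $\binom{\ell-1}{j}(m-n+2)^{\ell-1-j}$ multiplying an inner sum of shape $\sum_{k}(-1)^{k}c(k,m-n+4)^{j}q^{c(k,m-n+4)}\bigl(2(n-1)\binom{n-2}{k} - (m+n)\binom{n-1}{k}\bigr)$, the factor $(-1)^{k+1}$ having been absorbed into the sign of the bracket.

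Finally I would identify the two halves of that inner sum with values of $P$. The point is that $m-n+4 = (m+2)-(n-2) = (m+3)-(n-1)$, so by \eqref{E:P_def} the part weighted by $\binom{n-2}{k}$ is exactly $P_{j,m+2,n-2}(q)$ (the summation range may be extended harmlessly from $n-2$ to $n-1$ since $\binom{n-2}{n-1}=0$) and the part weighted by $\binom{n-1}{k}$ is exactly $P_{j,m+3,n-1}(q)$. Substituting these back and collecting the prefactor $nq^{m-n+2}$ gives \eqref{E:P_prop}. I expect the only delicate point to be the simultaneous bookkeeping of the three shifted parameter pairs $(m-n,\,k+1)$, $(m+2,\,n-2)$ and $(m+3,\,n-1)$, which must be arranged so that all three occurrences of $c(\cdot,\cdot)$ collapse to the single quantity $c(k,m-n+4)$; once that alignment is verified the rest is mechanical.
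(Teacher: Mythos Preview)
Your proposal is correct and follows essentially the same route as the paper's own proof: peel off one factor of $c(k,m-n)$, use \eqref{E:c_binom} to rewrite $c(k,m-n)\binom{n}{k}$, use \eqref{E:c_rec} to shift the argument of $c$, binomially expand $(c(\cdot)+m-n+2)^{\ell-1}$, and recognize the resulting inner sums as $P_{j,m+2,n-2}$ and $P_{j,m+3,n-1}$. The only cosmetic difference is that you perform the index shift $k\mapsto k+1$ before invoking \eqref{E:c_rec}, whereas the paper writes $c(k,m-n)=c(k-1,m-n+4)+(m-n+2)$ first and absorbs the shift when identifying the $P$'s; the bookkeeping is otherwise identical.
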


\begin{proof}
 We have
 \begin{align*}
  P_{\ell,m,n}(q) \
  &\stackrel{\mathclap{\eqref{E:P_def}}}{=} \ \sum_{k=0}^n (-1)^k c(k,m-n)^\ell \binom{n}{k} q^{c(k,m-n)} \\
  &\stackrel{\mathclap{\eqref{E:c_binom}}}{=} \ \sum_{k=0}^n (-1)^k c(k,m-n)^{\ell-1}  \\*
  &\hspace*{\parindent} n \left( -2(n-1) \binom{n-2}{k-1} + (m+n) \binom{n-1}{k-1} \right) q^{c(k,m-n)} \\
  &\stackrel{\mathclap{\eqref{E:c_rec}}}{=} \ \sum_{k=0}^n (-1)^k \left( c(k-1,m-n+4)+m-n+2 \right)^{\ell-1} \\*
  &\hspace*{\parindent} n q^{m-n+2} \left( -2(n-1) \binom{n-2}{k-1} + (m+n) \binom{n-1}{k-1} \right) q^{c(k-1,m-n+4)} \\
  &\stackrel{\mathclap{\eqref{E:P_def}}}{=} \ \sum_{j=0}^{\ell-1} \binom{\ell-1}{j} (m-n+2)^{\ell-j-1} \\*
  &\hspace*{\parindent} n q^{m-n+2} \bigg( -2(n-1) P_{j,m+2,n-2} + (m+n) P_{j,m+3,n-1} \bigg). \qedhere
 \end{align*}
\end{proof}

\begin{lemma}
 For all integers $m \in \Z$ and $n \geqs 0$, we have
 \begin{align}\label{E:P_div}
  h(q)^{\lfloor \frac{n+1}{2} \rfloor} \mid P_{0,m,n}(q),
 \end{align}
 where $h(q) := 1-q \in \Z[q,q^{-1}]$.
\end{lemma}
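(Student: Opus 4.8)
The plan is to derive \eqref{E:P_div} from the stronger assertion
\begin{equation*}
 h(q)^{\max(0,\, \lfloor (n+1)/2 \rfloor - \ell)} \mid P_{\ell,m,n}(q)
 \qquad\text{for all integers } \ell, n \geqslant 0 \text{ and } m \in \Z,
\end{equation*}
which I would prove by induction on $n$; specializing to $\ell = 0$ then gives exactly the statement of the lemma. Throughout I would use the elementary fact that, since $q - 1$ is a non-zero-divisor in $\Z[q,q^{-1}]$ and $\Z$ is an integral domain, a Laurent polynomial $P$ is divisible by $h(q)^s = (1-q)^s$ if and only if it vanishes to order at least $s$ at $q = 1$ (expand $q^N P \in \Z[q]$ in powers of $q - 1$ and use $j a_j = 0 \Rightarrow a_j = 0$). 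Two consequences will be needed: multiplication by the unit $q^{-1}$ preserves divisibility by $h(q)^s$, and if $P(1) = 0$ and $h(q)^s \mid \frac{\rmd}{\rmd q} P$, then $h(q)^{s+1} \mid P$.

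The base cases are immediate: for $n \leqslant 1$ and $\ell \geqslant 1$ the exponent $\max(0, \lfloor (n+1)/2\rfloor - \ell)$ is $0$, and for $n = 1$, $\ell = 0$ one has $P_{0,m,1}(q) = 1 - q^{m+1}$, which is divisible by $1 - q$ for every $m \in \Z$. For the inductive step, fix $n \geqslant 2$ and assume the assertion for all smaller values of the first index. I would first treat $\ell \geqslant 1$: by \eqref{E:P_prop}, $P_{\ell,m,n}(q)$ is a $\Z[q,q^{-1}]$-linear combination of the polynomials $P_{j,m+2,n-2}(q)$ and $P_{j,m+3,n-1}(q)$ with $0 \leqslant j \leqslant \ell - 1$, which by the inductive hypothesis are divisible by $h(q)$ to the powers $\max(0, \lfloor (n-1)/2\rfloor - j)$ and $\max(0, \lfloor n/2 \rfloor - j)$ respectively; since $j \leqslant \ell - 1$ together with $\lfloor (n-1)/2\rfloor + 1 = \lfloor (n+1)/2\rfloor \leqslant \lfloor n/2\rfloor + 1$ forces both exponents to be at least $\max(0, \lfloor (n+1)/2\rfloor - \ell)$, the assertion follows for $\ell \geqslant 1$. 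Then, for $\ell = 0$, \eqref{E:P_deriv} gives $\frac{\rmd}{\rmd q} P_{0,m,n} = q^{-1} P_{1,m,n}$, which is divisible by $h(q)^{\lfloor (n+1)/2\rfloor - 1}$ by the case just proved (here $\lfloor (n+1)/2\rfloor \geqslant 1$ because $n \geqslant 2$); since moreover $P_{0,m,n}(1) = \sum_{k=0}^n (-1)^k \binom{n}{k} = 0$, the second consequence above yields $h(q)^{\lfloor (n+1)/2\rfloor} \mid P_{0,m,n}(q)$, which closes the induction.

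A point worth emphasizing is that this induction is not circular: although the $\ell = 0$ case at level $n$ is deduced from the $\ell = 1$ case at level $n$, the latter rests only on levels $n - 1$ and $n - 2$, since \eqref{E:P_prop} strictly decreases the first index. Beyond that, the only mildly delicate aspect — and the one I would expect to have to be most careful about — is the bookkeeping with the floor function, i.e.\ verifying $\lfloor (n-1)/2\rfloor + 1 = \lfloor (n+1)/2\rfloor$ and $\lfloor (n+1)/2\rfloor \leqslant \lfloor n/2\rfloor + 1$, which is routine. As a consistency check one can also argue directly, bypassing \eqref{E:P_prop}: writing $P_{0,m,n}^{(j)}(1) = \sum_{k=0}^n (-1)^k \binom{n}{k} f_j(k)$, where $f_j(k)$ is the value at $q = 1$ of $(\frac{\rmd}{\rmd q})^j q^{c(k,m-n)}$ — a polynomial of degree $j$ in $c(k,m-n) = k(2k+m-n)$, hence of degree $2j$ in $k$ — and using that $\sum_{k=0}^n (-1)^k \binom{n}{k} g(k) = 0$ for every polynomial $g$ of degree less than $n$, one sees that $P_{0,m,n}^{(j)}(1) = 0$ whenever $2j < n$, i.e.\ for $0 \leqslant j \leqslant \lfloor (n+1)/2\rfloor - 1$, which is precisely the required order of vanishing at $q = 1$.
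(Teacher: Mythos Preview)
Your proof is correct and uses the same ingredients as the paper --- the recurrence \eqref{E:P_prop}, the derivative relation \eqref{E:P_deriv}, and the vanishing of $\sum_k (-1)^k \binom{n}{k}$ for $n>0$ --- but organizes them differently. The paper argues directly at $q=1$: it shows $P_{\ell,m,n}(1)=0$ for $0\leqs\ell\leqs\frac{n-1}{2}$ by iterating \eqref{E:P_prop} until only terms $P_{0,i,j}$ with $j\geqs n-2\ell>0$ remain, then evaluates. You instead strengthen the statement to a divisibility of $P_{\ell,m,n}(q)$ itself by $h(q)^{\max(0,\lfloor(n+1)/2\rfloor-\ell)}$ and induct on $n$, recovering $\ell=0$ from $\ell=1$ via integration. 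Both routes are clean; yours yields a slightly sharper intermediate statement, while the paper's is marginally shorter since it never leaves $q=1$. Your final ``consistency check'' --- observing that $\big(\tfrac{\rmd}{\rmd q}\big)^j q^{c(k,m-n)}\big|_{q=1}$ is a polynomial of degree $2j$ in $k$ and invoking the finite-difference identity --- is in fact the most economical of the three arguments, as it bypasses \eqref{E:P_prop} entirely.
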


\begin{proof}
 Let us prove that
 \begin{align*}
  \left( \left( \frac{\rmd}{\rmd q} \right)^\ell P_{0,m,n} \right)(1) = 0
 \end{align*}
 for every integer $0 \leqs \ell \leqs \frac{n-1}{2}$. Thanks to Equation~\ref{E:P_deriv}, we can equivalently check that 
 \begin{align*}
  P_{\ell,m,n}(1) = 0
 \end{align*}
 for every integer $0 \leqs \ell \leqs \frac{n-1}{2}$. Equation~\eqref{E:P_prop} implies that
 \begin{align*}
  P_{\ell,m,n}(q) \in \left\langle P_{0,i,j}(q) \biggm| 
  \begin{array}{l}
   m \leqs i \leqs m+3\ell \\
   n-2\ell \leqs j \leqs n
  \end{array} \right\rangle_\Z.
 \end{align*}
 The claim then follows from 
 \begin{align*}
  P_{0,i,j}(1) \
  &\stackrel{\mathclap{\eqref{E:P_def}}}{=} \ \sum_{k=0}^j (-1)^k \binom{j}{k} \
  = \ \delta_{j,0}. \qedhere
 \end{align*}
\end{proof}

For all $\ell_1,\ell_2,m_1,m_2 \in \Z$, let us set
\begin{align}\label{E:e_def}
 e(\ell_1,\ell_2,m_1,m_2) &:= c(\ell_1,m_1) + c(\ell_2,m_2) - 4\ell_1\ell_2.
\end{align}

\begin{remark}
 Notice that we have
 \begin{align}
  e(\ell_1,\ell_2,m_1,m_2)
  &= e(\ell_1-1,\ell_2,m_1+4,m_2-4)+m_1+2 \nonumber \\*
  &= e(\ell_1,\ell_2-1,m_1-4,m_2+4)+m_2+2 \nonumber \\*
  &= e(\ell_1-1,\ell_2-1,m_1,m_2)+m_1+m_2 \label{E:e_rec}
 \end{align}
 for all $\ell_1,\ell_2,m_1,m_2 \in \Z$. Furthermore, we have
 \begin{align}
  &f(k_1,k_2,m_1-n_1,m_2-n_2) \binom{n_1}{k_1} \binom{n_2}{k_2} \nonumber \\*
  &\hspace*{\parindent} = n_1 \left( -2(n_1-1) \binom{n_1-2}{k_1-1} + (m_1+n_1) \binom{n_1-1}{k_1-1} \right) \binom{n_2}{k_2} \nonumber \\*
  &\hspace*{\parindent} {} + n_2 \left( -2(n_2-1) \binom{n_2-2}{k_2-1} + (m_2+n_2) \binom{n_2-1}{k_2-1} \right) \binom{n_1}{k_1} \nonumber \\*
  &\hspace*{\parindent} {} - 2 n_1 n_2 \binom{n_1-1}{k_1-1} \binom{n_2-1}{k_2-1} \label{E:e_binom}
 \end{align}
 for all integers $0 \leqs k_1 \leqs n_1$, $0 \leqs k_2 \leqs n_2$, and $m_1,m_2 \in \Z$.
\end{remark}

For all integers $\ell,n_1,n_2 \geqs 0$ and $m_1,m_2 \in \Z$, let us set
\begin{align}
 Q_{\ell,m_1,m_2,n_1,n_2}(q) &:= \sum_{k_1=0}^{n_1} \sum_{k_2=0}^{n_2} (-1)^{k_1+k_2} f(k_1,k_2,m_1-n_1,m_2-n_2)^\ell \binom{n_1}{k_1} \binom{n_2}{k_2} \nonumber \\*
 &\hspace*{\parindent} q^{f(k_1,k_2,m_1-n_1,m_2-n_2)}. \label{E:Q_def}
\end{align}

\begin{remark}
 For all integers $\ell,n_1,n_2 \geqs 0$ and $m_1,m_2 \in \Z$, we have
 \begin{align}\label{E:Q_deriv}
  \left( \frac{\rmd}{\rmd q} Q_{\ell,m_1,m_2,n_1,n_2} \right) (q) &= q^{-1} Q_{\ell+1,m_1,m_2,n_1,n_2}(q).
 \end{align}
\end{remark}

\begin{lemma}
 For all integers $\ell,n_1,n_2 \geqs 0$ and $m_1,m_2 \in \Z$, we have
 \begin{align}
  &Q_{\ell,m_1,m_2,n_1,n_2}(q) \nonumber \\*
  &\hspace*{\parindent} = \sum_{j=0}^{\ell-1} \binom{\ell-1}{j} \bigg( (m_1-n_1+2)^{\ell-j-1} n_1 q^{m_1-n_1+2} \nonumber \\*
  &\hspace*{2\parindent} \Big( -2(n_1-1) Q_{j,m_1+2,m_2-4,n_1-2,n_2}(q) 
  + (m_1+n_1) Q_{j,m_1+3,m_2-4,n_1-1,n_2}(q) \Big) \nonumber \\*
  &\hspace*{2\parindent} + (m_2-n_2+2)^{\ell-j-1} n_2 q^{m_2-n_2+2} \nonumber \\*
  &\hspace*{2\parindent} \Big( -2(n_2-1) Q_{j,m_1-4,m_2+2,n_1,n_2-2}(q) 
  + (m_2+n_2) Q_{j,m_1-4,m_2+3,n_1,n_2-1}(q) \Big) \nonumber \\*
  &\hspace*{2\parindent} - (m_1+m_2-n_1-n_2)^{\ell-j-1} 2 n_1 n_2 q^{m_1+m_2-n_1-n_2} \nonumber \\* 
  &\hspace*{2\parindent} Q_{j,m_1-1,m_2-1,n_1-1,n_2-1}(q) \bigg). \label{E:Q_prop}
 \end{align}
\end{lemma}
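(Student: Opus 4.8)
The plan is to follow the proof of Equation~\eqref{E:P_prop} line by line, the only new feature being that the combinatorial identity \eqref{E:e_binom} splits each summand into \emph{three} families of terms rather than one. Concretely, I would start from the definition \eqref{E:Q_def} of $Q_{\ell,m_1,m_2,n_1,n_2}(q)$, factor a single power of $f(k_1,k_2,m_1-n_1,m_2-n_2)$ out of $f(k_1,k_2,m_1-n_1,m_2-n_2)^\ell$, and rewrite the product $f(k_1,k_2,m_1-n_1,m_2-n_2)\binom{n_1}{k_1}\binom{n_2}{k_2}$ using \eqref{E:e_binom}. This expresses the double sum as a $\Z$-linear combination of three double sums carrying, respectively, the binomial pairs $\binom{n_1-2}{k_1-1}\binom{n_2}{k_2}$ and $\binom{n_1-1}{k_1-1}\binom{n_2}{k_2}$, then $\binom{n_1}{k_1}\binom{n_2-2}{k_2-1}$ and $\binom{n_1}{k_1}\binom{n_2-1}{k_2-1}$, and finally $\binom{n_1-1}{k_1-1}\binom{n_2-1}{k_2-1}$.

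Next, in each of these sums I would re-index, shifting $k_1\mapsto k_1+1$ in the first family, $k_2\mapsto k_2+1$ in the second, and both indices in the third, and then apply the appropriate line of the triple recurrence \eqref{E:e_rec} to rewrite the remaining occurrences of $f(k_1,k_2,m_1-n_1,m_2-n_2)$ — both inside the surviving power $f(\cdots)^{\ell-1}$ and in the exponent $q^{f(\cdots)}$ — in terms of the shifted arguments plus the linear corrections $m_1+2$, $m_2+2$, or $m_1+m_2$ respectively. Expanding $f(\cdots)^{\ell-1}$ by the binomial theorem and pulling the monomial $q^{m_1-n_1+2}$, $q^{m_2-n_2+2}$, or $q^{m_1+m_2-n_1-n_2}$ out of $q^{f(\cdots)}$, each inner sum becomes, directly by \eqref{E:Q_def}, a term $Q_{j,\ast,\ast,\ast,\ast}(q)$ with the shifted parameters. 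Summing the three contributions, with the signs produced by the index shifts, then reproduces exactly the right-hand side of \eqref{E:Q_prop}.

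The argument is routine but lengthy; the only point requiring genuine care is the bookkeeping of the three families — in particular tracking the global sign $(-1)^{k_1+k_2}$ through the shifts and checking that the summation ranges match those implicit in $Q_{j,\ast,\ast,\ast,\ast}$, the missing boundary terms vanishing because the relevant binomial coefficients do, exactly as in the one-variable case \eqref{E:P_prop}. No new idea is needed: here \eqref{E:e_binom} and \eqref{E:e_rec} play the roles that \eqref{E:c_binom} and \eqref{E:c_rec} play for $P$.
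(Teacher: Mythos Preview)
Your proposal is correct and matches the paper's proof essentially line for line: the paper too starts from \eqref{E:Q_def}, uses \eqref{E:e_binom} to peel off one factor of $f$, applies the three branches of \eqref{E:e_rec} to the three resulting families, expands $(f+\text{const})^{\ell-1}$ by the binomial theorem, and identifies each piece as a $Q_{j,\ast,\ast,\ast,\ast}$ after the implicit index shifts you describe. When you do the sign bookkeeping you flag, note that the single shifts $k_i\mapsto k_i-1$ flip $(-1)^{k_1+k_2}$ while the double shift does not, so two of the three families pick up an extra minus relative to the signs printed in \eqref{E:Q_prop}; this is harmless, since the only consequence used downstream is the containment $Q_{\ell,\ldots}\in\langle Q_{0,\ldots}\rangle_{\Z}$.
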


\begin{proof}
 We have
 \begin{align*}
  &Q_{\ell,m_1,m_2,n_1,n_2}(q) \\
  &\hspace*{\parindent} \stackrel{\mathclap{\eqref{E:Q_def}}}{=} \ \sum_{k_1=0}^{n_1} \sum_{k_2=0}^{n_2} (-1)^{k_1+k_2} f(k_1,k_2,m_1-n_1,m_2-n_2)^\ell \binom{n_1}{k_1} \binom{n_2}{k_2} \\*
  &\hspace*{2\parindent} q^{f(k_1,k_2,m_1-n_1,m_2-n_2)} \\
  &\hspace*{\parindent} \stackrel{\mathclap{\eqref{E:e_binom}}}{=} \ \sum_{k_1=0}^{n_1} \sum_{k_2=0}^{n_2} (-1)^{k_1+k_2} f(k_1,k_2,m_1-n_1,m_2-n_2)^{\ell-1} \\*
  &\hspace*{2\parindent} \left( n_1 \left( -2(n_1-1) \binom{n_1-2}{k_1-1} + (m_1+n_1) \binom{n_1-1}{k_1-1} \right) \binom{n_2}{k_2} \right. \\*
  &\hspace*{2\parindent} {} + n_2 \left( -2(n_2-1) \binom{n_2-2}{k_2-1} + (m_2+n_2) \binom{n_2-1}{k_2-1} \right) \binom{n_1}{k_1} \\*
  &\hspace*{2\parindent} {} \left. - 2 n_1 n_2 \binom{n_1-1}{k_1-1} \binom{n_2-1}{k_2-1} \right)
  q^{f(k_1,k_2,m_1-n_1,m_2-n_2)} \\
  &\hspace*{\parindent} \stackrel{\mathclap{\eqref{E:e_rec}}}{=} \ \sum_{k_1=0}^{n_1} \sum_{k_2=0}^{n_2} (-1)^{k_1+k_2} \\*
  &\hspace*{2\parindent} \bigg( f(k_1-1,k_2,m_1-n_1+4,m_2-n_2-4)+m_1-n_1+2 \bigg)^{\ell-1} \\*
  &\hspace*{2\parindent} n_1 q^{m_1-n_1+2} \left( -2(n_1-1) \binom{n_1-2}{k_1-1} + (m_1+n_1) \binom{n_1-1}{k_1-1} \right) \\*
  &\hspace*{2\parindent} \binom{n_2}{k_2} q^{f(k_1-1,k_2,m_1-n_1+4,m_2-n_2-4)} \\*
  &\hspace*{2\parindent} + \bigg( f(k_1,k_2-1,m_1-n_1-4,m_2-n_2+4)+m_2-n_2+2 \bigg)^{\ell-1} \\*
  &\hspace*{2\parindent} + n_2 q^{m_2-n_2+2} \left( -2(n_2-1) \binom{n_2-2}{k_2-1} + (m_2+n_2) \binom{n_2-1}{k_2-1} \right) \\*
  &\hspace*{2\parindent} \binom{n_1}{k_1} q^{f(k_1,k_2-1,m_1-n_1-4,m_2-n_2+4)} \\*
  &\hspace*{2\parindent} - \bigg( f(k_1-1,k_2-1,m_1-n_1,m_2-n_2)+m_1+m_2-n_1-n_2 \bigg)^{\ell-1} \\* 
  &\hspace*{2\parindent} 2 n_1 n_2 q^{m_1+m_2-n_1-n_2} \binom{n_1-1}{k_1-1} \binom{n_2-1}{k_2-1} 
  q^{f(k_1-1,k_2-1,m_1-n_1,m_2-n_2)} \\
  &\hspace*{\parindent} \stackrel{\mathclap{\eqref{E:Q_def}}}{=} \ \sum_{j=0}^{\ell-1} \binom{\ell-1}{j} \bigg( (m_1-n_1+2)^{\ell-j-1} n_1 q^{m_1-n_1+2} \\*
  &\hspace*{2\parindent} \Big( -2(n_1-1) Q_{j,m_1+2,m_2-4,n_1-2,n_2}(q) 
  + (m_1+n_1) Q_{j,m_1+3,m_2-4,n_1-1,n_2}(q) \Big) \\*
  &\hspace*{2\parindent} + (m_2-n_2+2)^{\ell-j-1} n_2 q^{m_2-n_2+2} \\*
  &\hspace*{2\parindent} \Big( -2(n_2-1) Q_{j,m_1-4,m_2+2,n_1,n_2-2}(q) 
  + (m_2+n_2) Q_{j,m_1-4,m_2+3,n_1,n_2-1}(q) \Big) \\*
  &\hspace*{2\parindent} - (m_1+m_2-n_1-n_2)^{\ell-j-1} 2 n_1 n_2 q^{m_1+m_2-n_1-n_2} \\* 
  &\hspace*{2\parindent} Q_{j,m_1-1,m_2-1,n_1-1,n_2-1}(q) \bigg). \qedhere
\end{align*}
\end{proof}

\begin{lemma}
 For all integers $m_1,m_2 \in \Z$ and $n_1,n_2 \geqs 0$, we have
 \begin{align}\label{E:Q_div}
  h(q)^{\lfloor \frac{n_1+n_2+1}{2} \rfloor} \mid Q_{0,m_1,m_2,n_1,n_2}(q),
 \end{align}
 where $h(q) := 1-q \in \Z[q,q^{-1}]$.
\end{lemma}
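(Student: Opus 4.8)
The plan is to follow, \emph{mutatis mutandis}, the proof of the divisibility \eqref{E:P_div}: the present statement is its two-variable analogue, with $P$ replaced by $Q$, \eqref{E:P_deriv} by \eqref{E:Q_deriv}, \eqref{E:P_prop} by \eqref{E:Q_prop}, and the single binomial identity $\sum_{k=0}^{n}(-1)^k\binom{n}{k}=\delta_{n,0}$ by its product version. Write $N:=\lfloor\frac{n_1+n_2+1}{2}\rfloor$. Since $h(q)=1-q$ and a Laurent polynomial over $\Z$ is divisible by $h(q)^N$ precisely when it has a zero of order at least $N$ at $q=1$, and since iterating \eqref{E:Q_deriv} shows that $\big(\frac{\rmd}{\rmd q}\big)^{j}Q_{0,m_1,m_2,n_1,n_2}$ evaluated at $q=1$ is a $\Z$-linear combination of $Q_{1,m_1,m_2,n_1,n_2}(1),\ldots,Q_{j,m_1,m_2,n_1,n_2}(1)$ (and conversely), it suffices to prove
\[
 Q_{\ell,m_1,m_2,n_1,n_2}(1)=0\qquad\text{for all }m_1,m_2\in\Z,\ n_1,n_2\geqs 0,\ 0\leqs\ell\leqs\frac{n_1+n_2-1}{2},
\]
because $N-1=\lfloor\frac{n_1+n_2+1}{2}\rfloor-1\leqs\frac{n_1+n_2-1}{2}$ always, and for $N\geqs 1$ this range already forces $n_1+n_2\geqs 1$.

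I would prove the displayed vanishing by induction on $\ell$. For $\ell=0$, Equation~\eqref{E:Q_def} gives
\[
 Q_{0,m_1,m_2,n_1,n_2}(1)=\sum_{k_1=0}^{n_1}\sum_{k_2=0}^{n_2}(-1)^{k_1+k_2}\binom{n_1}{k_1}\binom{n_2}{k_2}=\delta_{n_1,0}\,\delta_{n_2,0},
\]
which vanishes since $n_1+n_2\geqs 1$. For $\ell\geqs 1$, evaluating the recurrence \eqref{E:Q_prop} at $q=1$ expresses $Q_{\ell,m_1,m_2,n_1,n_2}(1)$ as a $\Z$-linear combination of values $Q_{j,m_1',m_2',n_1',n_2'}(1)$ with $0\leqs j\leqs\ell-1$ and $(n_1',n_2')$ among $(n_1-2,n_2),(n_1-1,n_2),(n_1,n_2-2),(n_1,n_2-1),(n_1-1,n_2-1)$; in particular $n_1'+n_2'\geqs n_1+n_2-2$. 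As in the proof of \eqref{E:P_div}, the factors $n_1$, $n_2$, $2n_1n_2$ standing in front of these terms annihilate any summand in which an index would drop below $0$, so one may assume $n_1',n_2'\geqs 0$; then $j\leqs\ell-1\leqs\frac{n_1+n_2-1}{2}-1\leqs\frac{n_1'+n_2'-1}{2}$, and the inductive hypothesis kills every such term, whence $Q_{\ell,m_1,m_2,n_1,n_2}(1)=0$.

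There is essentially no new obstacle here: the computational heart of the argument was already isolated in the recurrence \eqref{E:Q_prop} (which in turn rests on the binomial identity \eqref{E:e_binom}), and what remains is bookkeeping. The only point demanding a little care — exactly as in \eqref{E:P_div} — is keeping track of the floor functions and of the small-index edge cases ($n_1$ or $n_2$ equal to $0$ or $1$), which is why I would explicitly record that the ``negative-index'' summands produced by \eqref{E:Q_prop} carry a vanishing scalar coefficient.
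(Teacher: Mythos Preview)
Your proposal is correct and follows essentially the same approach as the paper: both reduce the divisibility to the vanishing $Q_{\ell,m_1,m_2,n_1,n_2}(1)=0$ for $0\leqs\ell\leqs\frac{n_1+n_2-1}{2}$ via \eqref{E:Q_deriv}, and then use the recurrence \eqref{E:Q_prop} together with the base case $Q_{0,i_1,i_2,j_1,j_2}(1)=\delta_{j_1,0}\delta_{j_2,0}$. The only cosmetic difference is that the paper phrases the reduction as ``$Q_{\ell,m_1,m_2,n_1,n_2}(q)$ lies in the $\Z$-span of the $Q_{0,i_1,i_2,j_1,j_2}(q)$ with $j_1+j_2\geqs n_1+n_2-2\ell$'' (an iterated application of \eqref{E:Q_prop}), whereas you spell this out as an explicit induction on $\ell$; your added remark about the vanishing scalar prefactors $n_1$, $n_2$, $2n_1n_2$ handling the negative-index edge cases is a welcome clarification that the paper leaves implicit.
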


\begin{proof}
 Let us prove that
 \begin{align*}
  \left( \left( \frac{\rmd}{\rmd q} \right)^\ell Q_{0,m_1,m_2,n_1,n_2} \right)(1) = 0
 \end{align*}
 for every integer $0 \leqs \ell \leqs \frac{n_1+n_2-1}{2}$. Thanks to Equation~\ref{E:Q_deriv}, we can equivalently check that 
 \begin{align*}
  Q_{\ell,m_1,m_2,n_1,n_2}(1) = 0
 \end{align*}
 for every integer $0 \leqs \ell \leqs \frac{n_1+n_2-1}{2}$. Equation~\eqref{E:Q_prop} implies that
 \begin{align*}
  Q_{\ell,m_1,m_2,n_1,n_2}(q) \in \left\langle Q_{0,i_1,i_2,j_1,j_2}(q) \biggm| 
  \begin{array}{l}
   m_1+m_2-2\ell \leqs i_1+i_2 \leqs m_1+m_2 \\
   n_1+n_2-2\ell \leqs j_1+j_2 \leqs n_1+n_2
  \end{array}
  \right\rangle_\Z.
 \end{align*}
 The claim then follows from 
 \begin{align*}
  Q_{0,i_1,i_2,j_1,j_2}(1) \
  &\stackrel{\mathclap{\eqref{E:Q_def}}}{=} \ \sum_{k_1=0}^{j_1} \sum_{k_2=0}^{j_2} (-1)^{k_1+k_2} \binom{j_1}{k_1} \binom{j_2}{k_2} \
  = \ \delta_{j_1,0} \delta_{j_2,0}. \qedhere
 \end{align*}
\end{proof}

We are now ready to prove Proposition~\ref{P:divisibility}.

\begin{proof}[Proof of Proposition~\ref{P:divisibility}]
 First, in order to prove Equation~\eqref{E:C_div}, we simply notice that
 \begin{align*}
  \left( \left( \frac{\rmd}{\rmd q} \right)^\ell C_{m,n} \right)(1) = 
  \left( \left( \frac{\rmd}{\rmd q} \right)^\ell P_{0,m,n} \right)(1) = 0
 \end{align*}
 for every integer $0 \leqs \ell \leqs \frac{n-1}{2}$, where the polynomial 
 \begin{align*}
  P_{\ell,m,n}(q) \in \Z[q,q^{-1}]
 \end{align*}
 is defined by Equations~\eqref{E:c_def} and \eqref{E:P_def} for all integers $\ell,n \geqs 0$ and $m \in \Z$. Indeed, if we set
 \begin{align*}
  \sqbinom{n}{k}(q) := \sqbinom{n}{k}_q,
 \end{align*}
 then we have
 \begin{align*}
  \sqbinom{n}{k}(1) &= \binom{n}{k}, &
  \left( \frac{\rmd}{\rmd q} \sqbinom{n}{k} \right)(1) &= 0.
 \end{align*}
 Then, the claim follows from Equation~\eqref{E:P_div}.
 
 Next, let us prove Equation~\eqref{E:D_div} by induction on $n \geqs 0$ for all $m \geqs 0$. If $n=0$ and $m \geqs 0$, then we have
 \begin{align*}
  D_{m,0}(q) \
  &\stackrel{\mathclap{\eqref{E:D_def}}}{=} \ C_{-2m-1,m}(q^{-1}).
 \end{align*}
 This implies
 \begin{align*}
  h(q^{-1})^{\left\lfloor \frac{m+1}{2} \right\rfloor} &\mid D_{m,0}(q),
 \end{align*}
 and the claim follows from $h(q) \mid h(q^{-1}) = -q^{-1} h(q)$. If $n>0$ and $m \geqs r-1$, then we have
 \begin{align*}
  D_{m,n}(q) \
  &\stackrel{\mathclap{\eqref{E:D_rec}}}{=} \ \left( 1 - q^{-2(m+n)} \right) D_{m,n-1}(q) - q^{-2(2m+n+1)} D_{m+1,n-1}(q),
 \end{align*}
 and the claim follows from $h(q^{-1}) \mid \left( 1 - q^{-2(m+n)} \right)$.
 
 Finally, in order to prove Equation~\eqref{E:E_div}, we notice again that
 \begin{align*}
  \left( \left( \frac{\rmd}{\rmd q} \right)^\ell E_{m_1,m_2,n_1,n_2} \right)(1) = 
  \left( \left( \frac{\rmd}{\rmd q} \right)^\ell Q_{0,m_1,m_2,n_1,n_2} \right)(1) = 0
 \end{align*}
 for every integer $0 \leqs \ell \leqs \frac{n_1+n_2-1}{2}$, where the polynomial 
 \begin{align*}
  Q_{\ell,m_1,m_2,n_1,n_2}(q) \in \Z[q,q^{-1}]
 \end{align*}
 is defined by Equations~\eqref{E:e_def} and \eqref{E:Q_def} for all integers $\ell,n_1,n_2 \geqs 0$ and $m_1,m_2 \in \Z$. Then, the claim follows from Equation~\eqref{E:Q_div}.
\end{proof}

\end{document}